\definecolor{newblue}{RGB}{0,102,204}
\definecolor{newred}{RGB}{206,32,41}
\theoremstyle{plain}
\newtheorem{theorem}{Theorem}[section]
\newtheorem{lemma}[theorem]{Lemma}
\newtheorem{proposition}[theorem]{Proposition}
\newtheorem{corollary}[theorem]{Corollary}
\theoremstyle{definition}
\theoremstyle{remark}
\newtheorem{remark}[theorem]{Remark}
\newtheorem{notations}[theorem]{Notations}
\def\bin #1#2 {\left( \matrix { #1 \cr #2 \cr } \right) }
\begin{document}

\title[On the genus of a curve in a projective 3\,-fold]
{On the genus of a curve in a projective 3\,-fold}

\author{Vincenzo Di Gennaro }
\address{Universit\`a di Roma \lq\lq Tor Vergata\rq\rq, Dipartimento di Matematica,
Via della Ricerca Scientifica, 00133 Roma, Italy.}
\email{digennar@mat.uniroma2.it}

\author{Antonio Rapagnetta}
\address{Universit\`a di Roma \lq\lq Tor Vergata\rq\rq, Dipartimento di Matematica,
Via della Ricerca Scientifica, 00133 Roma, Italy.}
\email{rapagnet@mat.uniroma2.it}

\author{Pietro Sabatino}
\address{Institute for high performance computing and networking (ICAR-CNR),
via P. Bucci 8/9C 87036 Rende (CS), Italy.}
\email{pietro.sabatino@icar.cnr.it}

\abstract Let $X\subset \mathbb P^r$ be a projective factorial
variety of dimension $3$, degree $n$, with at worst isolated
singularities. Assume that the Picard group of $X$ is generated by
the hyperplane section class. Let $C\subset X$ be a projective
subscheme of dimension $1$, degree $d\gg n$, and arithmetic genus
$p_a(C)$. Improving a recent result by Liu, we exhibit a Castelnuovo's
bound for $p_a(C)$. In the case $X$ is Calabi-Yau, our bound gives a
step forward for a certain conjecture concerning the vanishing of
Gopakumar-Vafa invariants of $X$.

\bigskip\noindent {\it{Keywords}}: Arithmetic genus, projective
curve, Picard group, Castelnovo bound, Calabi-Yau 3\,-fold,
Gopakumar-Vafa invariant.

\medskip\noindent {\it{MSC2020}}\,: Primary 14N15;
Secondary 14C20, 14C22, 14F08, 14H81, 14J30, 14J32, 14M05, 14N05,
14N35.

\endabstract
\maketitle

\section{Introduction}

A classical problem in the theory of
projective curves is to determine  all their possible genera in
terms of the degree and the dimension of the space where they are
embedded \cite{EH}, \cite{H}. Although a primary focus of interest
has been, and still is, on irreducible and reduced curves, there are
good reasons for studying more general curves, i.e. projective
schemes of dimension $1$. For instance, in the enumerative geometry
there is some interest, coming from theoretical physics, in certain
invariants of Calabi-Yau 3\,-folds $X$, as Gromov-Witten invariants and
Gopakumar-Vafa invariants, involving $1$-cycles of $X$  \cite{Liu}.

\smallskip
$\bullet$
Before stating our results,
we believe it is useful to briefly summarize some classical results
on the genus of a projective curve.

\smallskip
Let $C\subset \mathbb P^2$ be a projective curve of degree $d$. The
arithmetic genus $$p_a(C):=1-\chi(\mathcal O_C)$$ of $C$ is
determined by the degree:
\begin{equation}\label{genusplane}
p_a(C)=\pi_0(d,2):=\binom{d-1}{2}=\frac{d^2}{2}-\frac{3}{2}d+1.
\end{equation}

\smallskip
This is no longer true when $C$ is not contained in a plane.
However, there is an upper bound for $p_a(C)$. More precisely,
assume that $C\subset \mathbb P^r$, $r\geq 3$, is an {\it integral},
i.e. reduced and irreducible, projective curve of degree $d$. Assume
moreover that $C$ is non degenerate, i.e. not contained in a hyperplane.
Then the celebrated {\it Castelnuovo's bound} states that:
\begin{equation}\label{Sabattino}
p_a(C)\leq
\pi_0(d,r):=\frac{d^2}{2(r-1)}-\frac{d}{2(r-1)}(r+1)+\frac{1+\epsilon}{2(r-1)}(r-\epsilon),
\end{equation}
where $\epsilon$ is defined by dividing $d-1=m(r-1)+\epsilon$,
$0\leq \epsilon\leq r-2$ \cite[Theorem (3.7), p. 87]{EH}.
Notice that for every $r\geq 3$ and $d\geq
r$ one has
\begin{equation}\label{decr}
\pi_0(d,r)<\pi_0(d,r-1).
\end{equation}
Castelnuovo's bound is sharp, and there is a classification of the
extremal curves, the so called {\it Castelnuovo curves}. It turns
out that {\it the Castelnuovo curves must lie on quadric
hypersurfaces}, and that, at least when $d>2r$, must lie on  {\it
rational normal surfaces of degree $r-1$ in $\mathbb P^r$}
\cite[Theorem (3.11), p. 91]{EH}.

\smallskip
Now assume that $C$ is contained in $\mathbb P^3$, i.e. put $r=3$.
We just said that if $C$ is a Castelnuovo curve, it must lie on a
quadric. Therefore, the arithmetic genus of an integral curve of
degree $d$ {\it not} lying on a quadric should be strictly less than
$\pi_0(d,3)$. So, it is natural to ask what is the sharp bound for a
curve not contained in quadrics. More generally, given an integer $s
\geq 2$, one can ask {\it what is the sharp bound for the genus of a
curve in $\mathbb P^3$ not contained in surfaces of degree $<s$}. We
will call this question {\it Halphen's problem}. When
$s=2$, the estimate sought is precisely the Castelnuovo's bound. The
celebrated Halphen-Gruson-Peskine Theorem  gives an answer to this
problem \cite[Th\'eor\`eme 3.1]{GP}, \cite[10.8.Teorema]{Ellia}:
\begin{theorem}[Halphen-Gruson-Peskine]\label{HGP}
Fix an integer $s\geq 3$. Let $C\subset \mathbb P^3$ be a projective
integral curve of degree $d>s^2-s$, not contained in a surface of
degree $<s$. Then one has:
\begin{equation}\label{HGPbound}
p_a(C) \leq \frac{d^2}{2s}+\frac{d}{2}(s-4)+1-\epsilon_1,
\end{equation}
where $$\epsilon_1=\frac{(s-1-\epsilon)(1+\epsilon)(s-1)}{2s},$$ and
$\epsilon$ is defined by dividing $d-1=ms+\epsilon$, $0\leq
\epsilon\leq s-1$.
\end{theorem}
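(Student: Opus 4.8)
The plan is to reduce the estimate on $p_a(C)$ to a study of the Hilbert function of a general plane section of $C$. Let $H\subset \mathbb P^3$ be a general plane and set $\Gamma=C\cap H$, a reduced $0$-dimensional scheme of degree $d$ which I regard as a set of $d$ points in $H\cong\mathbb P^2$. Writing $h_\Gamma(t)=\dim(S/I_\Gamma)_t$ for its Hilbert function, the arithmetic genus is recovered by the classical formula
\begin{equation*}
p_a(C)=\sum_{t\geq 1}\big(d-h_\Gamma(t)\big),
\end{equation*}
which follows from the exact sequence relating $\mathcal O_C$, its twist $\mathcal O_C(-H)$, and $\mathcal O_\Gamma$, together with $h_\Gamma(t)=d$ for $t\gg 0$. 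In terms of the first difference $\Delta h_\Gamma(t)=h_\Gamma(t)-h_\Gamma(t-1)$, which satisfies $\sum_t\Delta h_\Gamma(t)=d$, this rewrites as
\begin{equation*}
p_a(C)=\sum_{t\geq 2}(t-1)\,\Delta h_\Gamma(t).
\end{equation*}
Thus the problem becomes: \emph{maximize this weighted sum subject to the constraints that the hypotheses impose on the sequence $\{\Delta h_\Gamma(t)\}$.} Here I would invoke the integrality of $C$: in characteristic zero the Uniform Position Principle guarantees that the points of $\Gamma$ are in uniform position, which forces $\Delta h_\Gamma$ to be \emph{connected}, namely $\Delta h_\Gamma(t)=t+1$ as long as the points impose independent conditions on plane curves of degree $t$, and thereafter $\Delta h_\Gamma$ never exceeds its maximal attained value and is eventually non-increasing down to $0$. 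If $\sigma$ denotes the least degree of a plane curve through $\Gamma$, this yields $\Delta h_\Gamma(t)=t+1$ for $t\leq \sigma-1$ and $\Delta h_\Gamma(t)\leq \sigma$ for all $t$.

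The crucial link between the geometry of $C$ and the postulation of $\Gamma$ is the comparison of $\sigma$ with $s$, and this is the main obstacle. A priori, $C$ lying on no surface of degree $<s$ does not prevent $\Gamma$ from lying on a plane curve of degree $<s$. The passage from one to the other is supplied by a lifting lemma of Laudal--Gruson--Peskine type: if the general plane section of an integral space curve of degree $d$ lies on a plane curve of degree $\sigma$ with $d>\sigma^2+1$, then $C$ itself lies on a surface of degree $\sigma$. Applying this contrapositively with $\sigma=s-1$, and observing that the hypothesis $d>s^2-s$ implies $d>(s-1)^2+1$ precisely because $s\geq 3$, any plane curve of degree $\leq s-1$ through $\Gamma$ would force $C$ onto a surface of degree $\leq s-1$, contrary to assumption. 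Hence $\sigma\geq s$, so that $\Delta h_\Gamma(t)=t+1$ for $t\leq s-1$ and $\Delta h_\Gamma(t)\leq s$ for every $t$. Establishing this step, and correctly calibrating its numerical threshold against $d>s^2-s$, is the real content, since it is exactly what converts a global statement about surfaces in $\mathbb P^3$ into a bound on the growth of the Hilbert function of a finite set of points in $\mathbb P^2$.

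Finally I would solve the resulting extremal combinatorial problem. Among all connected sequences with $\Delta h_\Gamma(t)=t+1$ for $t\leq s-1$ and $\Delta h_\Gamma(t)\leq s$ for $t\geq s$, of total mass $d$, the weighted sum $\sum_{t\geq 2}(t-1)\Delta h_\Gamma(t)$ is maximized by concentrating the mass at large $t$, i.e. by the profile that rises $1,2,\dots,s$ and then stays equal to $s$ for as long as possible, terminating in a single shorter step; the length of the flat stretch and the size of the final step are governed by the integers $m$ and $\epsilon$ determined by $d-1=ms+\epsilon$, $0\leq\epsilon\leq s-1$. Feeding this extremal profile into the genus formula and simplifying produces exactly the right-hand side of \eqref{HGPbound}, with the correction term $\epsilon_1$ recording the contribution of the final partial step. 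This last computation is routine; the weight of the argument lies in the uniform-position analysis and, above all, in the lifting lemma that ties $\sigma$ to $s$.
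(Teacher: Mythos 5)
The paper does not prove Theorem \ref{HGP}: it quotes it from \cite{GP} and \cite{Ellia}. Your strategy --- general plane section $\Gamma$, uniform position, a Laudal-type lifting lemma forcing the initial degree $\sigma$ of $\Gamma$ to satisfy $\sigma\geq s$, then an extremal problem for the Hilbert function --- is indeed the classical route of those references, and two of your steps are sound modulo small corrections: the genus formula must be the inequality $p_a(C)\leq\sum_{t\geq 1}(d-h_\Gamma(t))$ (equality holds only for arithmetically Cohen--Macaulay curves; compare Lemma \ref{riduzioneseziperp}), which is anyway the direction you need, and the numerics of the lifting step do check out, since $d>s^2-s\geq (s-1)^2+1$ already for $s\geq 2$.

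The argument genuinely breaks down at the final optimization, for two reasons. First, the constraints you extract from uniform position --- $\Delta h_\Gamma(t)=t+1$ for $t\leq s-1$, $\Delta h_\Gamma(t)\leq s$ for all $t$, ``eventually non-increasing'' --- are far too weak: they admit the profile $1,2,\dots,s,s,\dots,s$ followed by a long tail of $1$'s, whose weighted sum $\sum_{t\geq 2}(t-1)\Delta h_\Gamma(t)$ is of order $d^2/2$, so nothing better than the trivial bound $\binom{d-1}{2}$ of Lemma \ref{planegenus} follows from them. The missing ingredient, which is the actual heart of the Gruson--Peskine proof, is that the $h$-vector of points in uniform position in $\mathbb P^2$ is of \emph{decreasing type}: once it strictly decreases it must continue to decrease strictly until reaching $0$. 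This is precisely what forbids long thin tails (and precisely what fails for reducible or non-reduced curves, whose genus can indeed reach $\binom{d-1}{2}$); you never state or use it. Second, your identification of the extremal profile is internally inconsistent and wrong: ``concentrating the mass at large $t$'' is what the long thin tails do, whereas your flat profile is the most compressed admissible one; and even among decreasing-type profiles, ``flat plus a single shorter step'' is not extremal in every congruence class. Concretely, for $s=3$ and $d=3(m+1)$ (so $\epsilon=2$, $\epsilon_1=0$) your profile gives $\frac{3m(m+1)}{2}$, while the complete-intersection $h$-vector $1,2,3,3,\dots,3,2,1$ of the same mass gives $\frac{3m(m+1)}{2}+1$, which is the value of (\ref{HGPbound}) and is attained by the complete intersection of type $(3,m+1)$. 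So the ``routine'' last computation cannot produce the right-hand side of (\ref{HGPbound}); as stated, your optimization would even ``prove'' a bound that complete intersections violate.
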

The inequality (\ref{HGPbound}) is sharp, and there is a description
of the extremal curves, which are necessarily contained in a surface
of degree $s$ \cite{GP}, \cite{Ellia}. Observe that
\begin{equation}\label{cfrLiu}
\frac{1}{2}(s-4)=\frac{1}{2s}(2\pi_0(s,2)-2-s).
\end{equation}
Theorem \ref{HGP} can be generalized. In fact, one may prove that if
$C\subset \mathbb P^r$, $r\geq 3$, is a projective integral non degenerate curve of
degree $d\gg s$, not contained in a surface of degree $<s$, then one
has:
\begin{equation}\label{MainCCD}
p_a(C) \leq \frac{d^2}{2s}+\frac{d}{2s}(2\pi_0-2-s)+O(1),
\end{equation}
where $O(1)$ is a constant, which only depends on $r$ and $s$, that
can be described explicitly, as is the hypothesis $d\gg s$.
Moreover, $\pi_0=\pi_0(s,r-1)$ denotes the Castelnuovo's bound for
curves of degree $s$ in $\mathbb P^{r-1}$ (see
\cite{EH}, \cite{Harris2}, \cite[Main Theorem]{CCD}). The bound is sharp, and
the extremal curves must lie on surfaces $S$ of degree $s$ with
sectional genus $\pi_0(s,r-1)$ (the sectional genus is the arithmetic
genus of a hyperplane section of $S$). Such surfaces are called {\it
Castelnuovo surfaces}.
When $r=3$, (\ref{MainCCD}) reduces to Halphen bound (\ref{HGPbound}).

\smallskip
Halphen's problem admits a weaker formulation. One can ask what is
the sharp bound for the arithmetic genus of an integral curve of
degree $d$, contained in a {\it fixed} integral surface of $\mathbb
P^3$ of degree $s$. This is {\it Noether's problem}. This problem
can be seen as an extension to any surface $S$ of what happens when
$S$ is a plane. If $d>s^2-s$, in view of Bezout's theorem, a curve
contained in a surface of degree $s$ cannot be contained in a
surface of degree $<s$, and so we fall back into Halphen's problem.
And in fact, the answer given to this problem is exactly the same as
Halphen's estimate \cite{Harris1980}. Actually, one may examine {Noether's
problem} also in the range $d\leq s^2-s$ \cite{Harris1980}.

\smallskip
Noether's problem can also be formulated for a fixed
surface in $\mathbb P^r$, $r\geq 3$. In this case, one knows the
following. If $C$ is an integral curve of degree $d\gg s$, contained
in a given integral non degenerate surface $S\subset \mathbb P^r$, $r\geq 3$, of
degree $s$ and sectional genus $\pi$, one has
\begin{equation}\label{LemmaVDG}
p_a(C)\leq \frac{d^2}{2s}+\frac{d}{2s}(2\pi-s-2)+O(1),
\end{equation}
where $O(1)$ is a constant which only depends on $s$ and $r$, but
not sharp \cite[Lemma]{D3}. However, we notice that if $C=S\cap F_{m+1}$ is a
complete intersection of $S$ with a hypersurface $F_{m+1}$ of degree
$m+1=\frac{d}{s}$ then (compare with (\ref{Sab2}) below):
\begin{equation}\label{boundintcompl}
p_a(C)=\frac{d^2}{2s}+\frac{d}{2s}(2\pi-s-2)+1.
\end{equation}
This means that the previous estimate (\ref{LemmaVDG}), excluding
the constant term $O(1)$, is sharp. Moreover, we also notice that,
based on an observation by Harris \cite[p. 196]{Harris1980}, the previous
bound (\ref{LemmaVDG}) is in a certain sense predicted by the Hodge
Index Theorem. In fact, if $S\subseteq \mathbb P^r$ is a smooth
surface, then from the Hodge Index Theorem we know that
$$\left(C-\frac{d}{s}H_S\right)^2\leq 0,$$
$H_S$\,=\,hyperplane section of $S$. If $S$ is also subcanonical,
that is $K_S=qH_S$, $q\in\mathbb Q$, applying the adjunction formula
we deduce
\begin{equation}\label{HIT}
p_a(C)\leq \frac{d^2}{2s}+\frac{d}{2s}(2\pi-2-s)+1
\end{equation}
for every closed subscheme $C$ of $S$ of dimension $1$ and degree
$d$. Here, $\pi$ denotes the sectional genus of $S$. Notice that, in view of Castelnuovo's bound,
if $S$ is non degenerate in  $\mathbb P^r$ with $r\geq 4$, then (compare with (\ref{decr}) and (\ref{cfrLiu})):
\begin{equation}\label{lessLiu}
\frac{1}{2s}(2\pi-s-2)\leq \frac{1}{2s}(2\pi_0(s,r-1)-2-s)< \frac{1}{2s}(2\pi_0(s,2)-2-s)=\frac{1}{2}(s-4).
\end{equation}

\smallskip
Halphen-Gruson-Peskine Theorem, and all  the subsequent quoted
generalized results, apply to integral curves. Moreover, almost their proofs
are based on a property satisfied by the general hyperplane section
$\Gamma$ of an integral curve, {\it the uniform position principle}
(see for example \cite{Harris1980}, \cite{EH}, \cite{CCD}, \cite{Ellia}). Simply
put, {\it it is not possible to distinguish two subsets of points of
$\Gamma$ having the same order}.

\smallskip
$\bullet$
Now let's turn to more recent results, and those appearing in our
article.

\smallskip
Recently, Macr\`i and Schmidt observed that
Theorem \ref{HGP}
can be proved with a new method,
called {\it wall-crossing}, which uses the {\it Bridgeland stability
conditions} \cite{Macri'S}. In this new proof a generalized  Bogomolov-Gieseker inequality,
concerning  Chern classes of certain sheaves on
$\mathbb P^3$ and necessary to construct Bridgeland stability conditions on $\mathbb P^3$,  is fundamental \cite{Macri'}.

Next, based on the work of Macr\`i and Schmidt, Liu studies the genus
of a curve, even if it is not integral, on certain projective
3\,-folds, obtaining a sort of relative Castelnuovo's bound:
\begin{theorem}\cite[Corollary 4.15]{Liu}\label{Liu}
Let $X\subseteq\mathbb P^r$ be a projective factorial  variety of
dimension $3$, degree $n$, with at
worst isolated singularities. Assume that the Picard group of $X$ is
generated by the hyperplane section class. Let $C\subset X$ be a projective
subscheme of dimension $1$, and degree $d$. If $d\gg n$, then
\begin{equation}\label{Liubound}
p_a(C)\leq \frac{d^2}{2n}+\frac{d}{2}(n-4)+1-\epsilon_1.
\end{equation}
\end{theorem}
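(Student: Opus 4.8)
The plan is to deduce the bound from the generalized Bogomolov--Gieseker inequality for tilt-stable objects on $X$, exploiting the wall-crossing philosophy of Macr\`i and Schmidt \cite{Macri'S}, \cite{Macri'} that underlies Liu's work \cite{Liu}. \emph{Step 1 (genus as a Chern number).} Because $\mathrm{Pic}(X)=\mathbb{Z}H$ and $X$ is factorial with isolated singularities, every Weil divisor is Cartier and $K_X=\kappa H$ for some $\kappa$. Writing $p_a(C)=1-\chi(\mathcal O_C)$ and applying Grothendieck--Riemann--Roch to the structure sequence $0\to\mathcal I_C\to\mathcal O_X\to\mathcal O_C\to 0$, one finds $p_a(C)=1+\tfrac{\kappa}{2}d-\mathrm{ch}_3(\mathcal O_C)$, where $d=H\cdot\mathrm{ch}_2(\mathcal O_C)$. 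Thus the theorem becomes an \emph{upper} bound for the top Chern number $\mathrm{ch}_3(\mathcal I_C)=-\mathrm{ch}_3(\mathcal O_C)$ of the ideal sheaf. (The isolated singularities must be shown not to corrupt this codimension-$3$ computation, e.g. by working near the singular points separately.)

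\emph{Step 2 (tilt stability and the key inequality).} For $\beta\in\mathbb R$ and $\alpha>0$ introduce the twisted character $\mathrm{ch}^\beta=e^{-\beta H}\mathrm{ch}$ and the tilt-slope $\nu_{\alpha,\beta}(E)=\big(H\cdot\mathrm{ch}_2^\beta(E)-\tfrac{\alpha^2}{2}H^3\,\mathrm{ch}_0(E)\big)\big/\big(H^2\cdot\mathrm{ch}_1^\beta(E)\big)$. The hypotheses on $X$ (factoriality, isolated singularities, $\mathrm{Pic}=\mathbb{Z}H$) are precisely what is needed to transport the construction of tilt stability and the generalized Bogomolov--Gieseker inequality from $\mathbb P^3$ to $X$; the latter asserts that every $\nu_{\alpha,\beta}$-tilt-semistable $E$ lying on the numerical wall $\nu_{\alpha,\beta}(E)=0$ satisfies $\mathrm{ch}_3^\beta(E)\le\tfrac{\alpha^2}{6}\,H^2\cdot\mathrm{ch}_1^\beta(E)$.

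\emph{Step 3 (application to $\mathcal I_C$ and wall-crossing).} For $\alpha\gg 0$ the rank-one torsion-free sheaf $\mathcal I_C$ is tilt-stable. Since $\mathrm{ch}(\mathcal I_C)=(1,0,-[C],\ast)$ with $H\cdot[C]=d$ and $H^3=n$, the locus $\nu_{\alpha,\beta}(\mathcal I_C)=0$ is the conic $\alpha^2=\beta^2-\tfrac{2d}{n}$ in the $(\beta,\alpha)$-plane. I would follow $\mathcal I_C$ along this locus down to the largest actual wall, where $\mathcal I_C$ first ceases to be stable, and apply the inequality of Step 2 just outside that wall. The assumption $d\gg n$ serves to confine this largest wall to a controlled region and to classify, or quantitatively absorb, the possible destabilizing subobjects, which are governed by the twists $\mathcal O_X(-k)$ and by sheaves supported on the degree-$n$ hyperplane sections of $X$.

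\emph{Step 4 (optimization).} Rewriting $\mathrm{ch}_3^\beta\le\tfrac{\alpha^2}{6}H^2\cdot\mathrm{ch}_1^\beta$ as an upper bound for $\mathrm{ch}_3(\mathcal I_C)$ as a function of $d$, $n$ and the free parameter $\beta$, one optimizes over $\beta$. The optimum is attained at a value dictated by the Euclidean division $d-1=mn+\epsilon$, and the gap between the continuous optimum and the nearest admissible (half-)integral value produces exactly the correction $\epsilon_1$, giving \eqref{Liubound}. The two genuinely hard points are the verification of the generalized Bogomolov--Gieseker inequality on the possibly singular threefold $X$ (Step 2) and the wall-crossing analysis of Step 3: one must rule out, or absorb, smaller walls so that the inequality is applied precisely on the semistable locus where it is sharpest. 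The remainder is bookkeeping with Chern characters together with a one-variable optimization.
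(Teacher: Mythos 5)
Your proposal has a genuine gap, and it is concentrated in Step 2. You assert that the hypotheses on $X$ (factorial, isolated singularities, $\mathrm{Pic}(X)=\mathbb{Z}H$) ``are precisely what is needed to transport'' the generalized Bogomolov--Gieseker inequality from $\mathbb{P}^3$ to $X$. This is not so: that inequality for tilt-semistable objects is a deep conjecture of Bayer--Macr\`i--Toda, proved by Macr\`i on $\mathbb{P}^3$ and on only a short list of other threefolds; it is known to fail on some smooth threefolds of higher Picard rank, and in the generality of the theorem's hypotheses (which include, e.g., Calabi--Yau threefolds of Picard rank one) it is open. Indeed, the availability of such an inequality on Calabi--Yau threefolds is essentially the obstruction behind the Castelnuovo bound conjecture \cite[Conjecture 1.1]{Liu} that this paper presents as open and only partially advances (Corollary \ref{ncor3}); if your Step 2 were available as stated, far stronger results would follow immediately. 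So the step on which your whole argument rests cannot be invoked.

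The proof this statement actually refers to (Liu's, as summarized in the Introduction) sidesteps exactly this issue: one takes a generic projection $\mathbb{P}^r\dashrightarrow\mathbb{P}^3$, which restricts to a finite map on $X$ carrying $C$ to a Cohen--Macaulay curve $C'\subset\mathbb{P}^3$ of the same degree with $p_a(C)\leq p_a(C')$, and then runs the entire wall-crossing and Bogomolov--Gieseker analysis on $\mathbb{P}^3$, where Macr\`i's theorem is a theorem. This also explains a structural feature of the bound (\ref{Liubound}) that your plan cannot reproduce: the linear term $\frac{d}{2}(n-4)$ and the correction $1-\epsilon_1$ are exactly the Halphen data of $\mathbb{P}^3$ with $s=n$, because after projection all information about $X$ beyond its degree (in particular its sectional genus $\pi_X$) is lost. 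Your Step 1 correctly predicts that an intrinsic argument on $X$ would produce a bound governed by $K_X=\kappa H$, i.e. by $\pi_X$ rather than by $n$ alone --- but that sharper bound is precisely the content of the paper's main Theorem \ref{n3fold}, which is obtained not by proving BG-type inequalities on $X$ but by combining Liu's projection-and-iteration scheme with classical Castelnuovo--Halphen arguments on surfaces (Propositions \ref{nSintegra} and \ref{nSnonintegra}). Finally, note that the paper observes (Remark \ref{finalr}, $(iii)$) that for \emph{integral} $C$ the statement follows from \cite{CCD} and \cite[Lemma]{D3} with no stability theory at all; the force of the theorem, and the reason wall-crossing enters, is that $C$ may be reducible and non-reduced, so uniform-position arguments are unavailable.
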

The number $\epsilon_1$ appearing in (\ref{Liubound})   is defined
as in (\ref{HGPbound}) with $n$ instead of $s$. And in fact, setting
$n=s$, the bound (\ref{Liubound}) is equal to  (\ref{HGPbound}).
Actually, previous theorem is stated under slightly more general
hypotheses, and one may make explicit the assumption $d\gg n$, see
\cite[Theorem 4.9, Theorem 4.10, and Corollary 4.11]{Liu}. Observe that when
$X=\mathbb P^3$, the bound (\ref{Liubound}) reduces to
(\ref{genusplane}).

\smallskip
In short, the proof consists of a iterative method, whose purpose is to
reduce, via wall-crossing,
to a Noether's problem for curves on a {\it fixed} surface of $X$ of
small degree compared to $d$. Here, however, curves and surfaces
may not be integral. Therefore, to examine this case,
one still need to use the wall-crossing method.
Next, one reduces it to an
integral surface of $X$, and one may conclude  with the generalized
Bogomolov-Gieseker inequality.
Note that, if the curve $C$ is
integral, Theorem \ref{Liu} can be easily proved with
classical methods by combining \cite{CCD} with \cite{D3} (see
Remark \ref{finalr}, $(iii)$, below).

\smallskip
To apply the wall-crossing method and the generalized
Bogomolov-Gieseker inequality, one must reduce the problem to a
curve problem in $\mathbb P^3$. This can be done by projecting
$C$ onto a curve $C'$ of $\mathbb P^3$ of the same degree, such
that $p_a(C)\leq p_a(C')$. Therefore, the problem reduces to
$C'$. This explains why the bound
(\ref{Liubound}) is equal to Halphen's bound (\ref{HGPbound}) in $\mathbb P^3$.

\smallskip
But Hodge Index Theorem suggests that, in general,  the linear term
in $d$ in the bound (\ref{Liubound}) should not be fine (compare
with (\ref{HIT}) and (\ref{lessLiu})).

\smallskip
$\bullet$
Taking into account inequality (\ref{HIT}),
our paper aims to refine
the bound (\ref{Liubound}) of Theorem \ref{Liu} in the linear term in $d$.
As already noted in (\ref{boundintcompl}), the linear term in $d$ suggested by the
Hodge Index Theorem is sharp.

\smallskip
Our main result is the following:
\begin{theorem}\label{n3fold}
Let $X\subseteq\mathbb P^r$ be a projective factorial  variety of
dimension $3$, degree $n$, linear arithmetic genus $\pi_X$, with at
worst isolated singularities. Assume that the Picard group of $X$ is
generated by the hyperplane section class.  Set:
$$\nu=\frac{4}{3}n(n+1), \quad M(n)=\frac{n^2}{2}\left[\frac{2}{3}(n+1)+4+2\nu^6\right]^2.$$
Let $C\subset X$ be a closed subscheme of $X$ of dimension $1$, of degree $d>M(n)$, and arithmetic genus
$p_a(C)$. Then one has:
\begin{equation}\label{nbound3fold}
p_a(C)\leq \frac{d^2}{2n}+\frac{d}{2n}(2\pi_X-2-n)+4\nu^6.
\end{equation}
\end{theorem}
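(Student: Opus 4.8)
The shape of the right-hand side of (\ref{nbound3fold}) is exactly the Hodge Index / Noether bound (\ref{HIT}), equivalently the estimate (\ref{LemmaVDG}) of \cite{D3}, evaluated on a surface of degree $n$ and sectional genus $\pi_X$. The natural such surface is a hyperplane section $S:=X\cap H$, which has degree $n$ and sectional genus $\pi_X$ for every $H$ (the arithmetic genus of a curve section being constant in $|H|$). The plan is therefore to reduce the estimate for an arbitrary $C$ to a Noether-type problem on such an $S$. The first, elementary, ingredient is a Bezout dichotomy: since $\mathrm{Pic}(X)=\mathbb{Z}\cdot H$ forces every divisor of $X$ to have degree a multiple of $n$, while a surface $T\not\subset X$ of degree $<n$ meets $X$ in a scheme of degree $<n^2$, for $d>n^2$ no surface of degree $<n$ contains $C$; and if a surface of degree exactly $n$ contains $C$, the same count shows it must be a hyperplane section of $X$, whence $C$ is degenerate.

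For integral $C$ this already settles the matter by the classical results, as indicated in Remark \ref{finalr}. If $C$ spans $\mathbb{P}^r$, then by the dichotomy it lies on no surface of degree $\le n$, so the Main Theorem of \cite{CCD}, i.e. (\ref{MainCCD}) with $s=n+1$, applies and yields a bound with leading term $\frac{d^2}{2(n+1)}$, which for $d>M(n)$ lies below the right-hand side of (\ref{nbound3fold}). If instead $C$ is degenerate, then (for $d$ large the component of $X\cap H$ through $C$ cannot have degree $<n$, so) it lies on an integral surface $S=X\cap H$ of degree $n$ and sectional genus $\pi_X$, and the Noether bound (\ref{LemmaVDG}) of \cite{D3} gives
\begin{equation*}
p_a(C)\leq\frac{d^2}{2n}+\frac{d}{2n}(2\pi_X-2-n)+O(1),
\end{equation*}
matching (\ref{nbound3fold}). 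Thus the extremal curves are the degenerate ones lying on hyperplane sections of $X$, in accordance with the complete-intersection example (\ref{boundintcompl}).

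The substance of the theorem, and the main obstacle, is the case in which $C$ and the surfaces that arise need not be integral, where the uniform position principle underlying \cite{CCD} and \cite{D3} is unavailable. Here I would follow the wall-crossing strategy of Liu \cite{Liu}, built on Macr\`i--Schmidt \cite{Macri'S}: pass to the ideal sheaf of $C$ and deform the Bridgeland stability condition, so that crossing the relevant wall exhibits $C$ as governed by a fixed divisor of $X$, a surface of degree $n$; the generalized Bogomolov--Gieseker inequality of Macr\`i \cite{Macri'} controls the Chern classes, hence the genus, across the walls, and a further reduction brings one to an integral surface. The essential point, and the source of the improvement over (\ref{Liubound}), is to carry out this reduction \emph{without} projecting $C$ to $\mathbb{P}^3$: one keeps the destabilizing surface as a hyperplane section $S=X\cap H$, retaining its sectional genus $\pi_X$, rather than replacing it by a plane and thereby inflating the linear coefficient to the Halphen value $\frac{1}{2}(n-4)$. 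By (\ref{lessLiu}) this is exactly the gain $\frac{1}{2n}(2\pi_X-2-n)\le\frac{1}{2}(n-4)$ predicted by the Hodge Index Theorem.

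It remains to assemble the estimate and make the constants explicit. Summing the contributions across the finitely many walls and combining them with the Noether bound on the resulting surface produces the leading term $\frac{d^2}{2n}$ and the linear term $\frac{d}{2n}(2\pi_X-2-n)$ together with a bounded remainder. All error terms — from the Bogomolov--Gieseker estimates, from the numerical invariants of the walls, and from the passage to an integral surface — depend only on $n$ and are absorbed into $4\nu^6$, while the hypothesis $d>M(n)$ is what guarantees both that the Bezout dichotomy applies and that the wall-crossing analysis is valid and dominates the competing bound of \cite{CCD} in the non-degenerate range. The large size of $M(n)$ and of $4\nu^6$ reflects the lossy nature of these stability estimates, not the integral case, which as noted needs only $d$ of order $n^2$.
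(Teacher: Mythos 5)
Your treatment of the integral case is essentially Remark \ref{finalr}, $(iii)$, of the paper and is sound as far as it goes: the Bezout dichotomy (factoriality and $\mathrm{Pic}(X)=\mathbb{Z}\cdot H$ force every surface inside $X$ to have degree a multiple of $n$), combined with \cite{CCD} for non-degenerate $C$ and \cite[Lemma]{D3} for $C$ in a hyperplane section, does give (\ref{nbound3fold}) for integral curves. But for the general case --- which is the actual content of the theorem --- your plan has a fatal gap: you propose to run the wall-crossing \emph{without} projecting $C$ to $\mathbb{P}^3$, keeping the destabilizing surface as a hyperplane section $S=X\cap H$ of $X$. Bridgeland stability conditions on a threefold are only available where a generalized Bogomolov--Gieseker inequality is known to hold; Macr\`i's theorem \cite{Macri'} proves it for $\mathbb{P}^3$, and nothing of the sort exists for an arbitrary factorial $X$ with isolated singularities (the inequality even fails on some smooth threefolds). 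This is precisely why Liu projects to $\mathbb{P}^3$, and the paper does exactly the same: all of its Bridgeland input (Lemma \ref{lemma:bogomolov}, Lemma \ref{lemma:actualwalls}, Corollary \ref{Halphen2}) concerns ideal sheaves of curves in $\mathbb{P}^3$, not in $X$. So the mechanism you designate as ``the essential point, and the source of the improvement'' is not implementable.

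The improvement over (\ref{Liubound}) in fact enters at a different place. The paper projects $C$ to $C'_0\subset\mathbb{P}^3$ and runs Liu's trichotomy there. In Type I (no surface of degree $\leq\frac{4}{3}(n+1)$ meets $C'_0$ in a curve of large degree) Corollary \ref{Halphen2} gives a bound with leading term $\frac{d^2}{2(n+1)}$, already below (\ref{nbound3fold}). In Type II, $C'_0$ lies on a surface $S_0\subset\mathbb{P}^3$ of small degree; one takes the \emph{cone} over $S_0$ and intersects it with $X$, obtaining a surface $D_0\subset X$ containing the original $C$, and then applies Proposition \ref{nSnonintegra} --- a purely classical Noether-type bound proved in Section 2 by induction on the components of $D_0$, with no wall-crossing --- whose linear coefficient $\frac{1}{2s}(2\pi_1-2-s)$ is tied to $\pi_X$ via (\ref{Sab2}). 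That classical proposition, not a modified stability argument, is where the Hodge-index-shaped linear term comes from. Type III yields a decomposition $p_a(C_0)=p_a(C_1)+p_a(D_1\cap C_0)+k_1d_1-1$ and the argument iterates on $C_1$ with strictly decreasing degrees until $d_m\leq M_3=4\nu^6$; the explicit remainders $R_m$ tracked through this iteration, not any ``summing of contributions across the finitely many walls'', are what produce the constants $M(n)$ and $4\nu^6$. Your final paragraph asserts that all error terms are absorbed but supplies no mechanism for this; in the paper that bookkeeping is the bulk of the proof.
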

The linear arithmetic genus $\pi_X$ of $X$ is the sectional genus
of its general hyperplane section. When $d\gg n$, the bound (\ref{nbound3fold}) is finer than the bound
(\ref{Liubound}) (compare with (\ref{lessLiu})).

\smallskip
The proof of Theorem \ref{n3fold} consists in showing that the
iterative method introduced in \cite[Proof of Theorem 4.9]{Liu} also
fits our estimate (\ref{nbound3fold}). However, once the problem is
reduced to the case where the curve is contained in a surface $S$ of
$X$ of degree $s$ small compared to $d$, our proof proceeds a little
differently. In fact, to solve this Noether's problem (up to the
constant term), the wall-crossing method is not necessary. With
classical arguments, we reduce to the case where $S$ is integral,
and we obtain the corresponding estimate by adapting the argument appearing
in \cite[Proof of Lemma]{D3}. To this purpose, see Proposition \ref{nSintegra}
and Proposition \ref{nSnonintegra} below, and their proofs. In the
subsequent Proposition \ref{refined}, Corollary \ref{intcompl},
Corollary \ref{CY} and Corollary \ref{P3}, we refine the constant
term of the bound appearing in Proposition \ref{nSnonintegra} under
the assumption $d\gg \deg S$, give some information on extremal
curves contained in $S$, and in certain cases we prove sharp bounds.
In particular, we prove that extremal curves are contained in
components of $S$ of minimal degree.

\smallskip
Taking into account these Noether's type results, when $d\gg M(n)$,
we are able to refine the constant term in (\ref{nbound3fold}), and
to prove that the extremal curves in $X$ are contained in a
hyperplane section. For more precise statements, see Corollary
\ref{cor1} and Corollary \ref{cor2} below.

\smallskip
As a consequence of Corollary \ref{cor1}, we obtain the following
result, concerning Calabi-Yau varieties of dimension $3$:
\smallskip
\begin{corollary}\label{ncor3}
Let $X\subseteq\mathbb P^r$ be a smooth Calabi-Yau variety of
dimension $3$ and degree $n$. Assume that the Picard group of $X$ is
generated by the hyperplane section class. Let $C$ be  a closed subscheme of $X$ of dimension $1$
and of degree $d$ with $d>M^*(n)=6n^2M(n)^2$. Then we have:
$$
p_a(C) \leq \frac{d^2}{2n}+\frac{d}{2}+n^4.
$$
\end{corollary}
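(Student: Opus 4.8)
The plan is to obtain Corollary \ref{ncor3} as a direct specialization of Corollary \ref{cor1} to the Calabi--Yau setting: the only inputs to check are the value of the linear arithmetic genus $\pi_X$, which for a Calabi--Yau $3$-fold is pinned down by the triviality of the canonical class, and the fact that the refined constant produced by Corollary \ref{cor1} collapses to $n^4$ once $\pi_X$ is substituted. Since $X$ is assumed smooth, factorial with $\mathrm{Pic}(X)$ generated by the hyperplane class, and $d>M^*(n)=6n^2M(n)^2$, all the hypotheses of Corollary \ref{cor1} are in force.

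First I would compute $\pi_X$. As $X$ is a smooth Calabi--Yau $3$-fold, $K_X\cong\mathcal O_X$, and by Bertini a general hyperplane section $S=X\cap H$ is a smooth surface. Adjunction on $X$ gives
\begin{equation*}
K_S=(K_X+\mathcal O_X(1))|_S=\mathcal O_S(1)=H_S,
\end{equation*}
where $H_S$ is the restricted hyperplane class; thus $S$ is subcanonical with slope $q=1$ in the notation of (\ref{HIT}), and $H_S^2=\deg S=n$. A general hyperplane section $\Gamma\in|H_S|$ of $S$ then satisfies, again by adjunction,
\begin{equation*}
2p_a(\Gamma)-2=\Gamma\cdot(\Gamma+K_S)=H_S\cdot 2H_S=2H_S^2=2n,
\end{equation*}
whence $p_a(\Gamma)=n+1$. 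Since $\pi_X$ is by definition the sectional genus $p_a(\Gamma)$, we obtain $\pi_X=n+1$.

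Substituting $\pi_X=n+1$ into the linear coefficient of (\ref{nbound3fold}) collapses it,
\begin{equation*}
\frac{d}{2n}\bigl(2\pi_X-2-n\bigr)=\frac{d}{2n}\cdot n=\frac d2,
\end{equation*}
which reproduces the asserted linear term $d/2$. This is exactly the numerical shadow of the Hodge Index prediction (\ref{HIT}): on the subcanonical surface $S$ the sectional genus $\pi=n+1$ forces linear slope $1/2$, and by (\ref{boundintcompl}) this slope is sharp, being attained by the complete intersections $S\cap F_{m+1}$.

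It remains to replace the crude constant $4\nu^6$ of Theorem \ref{n3fold} by $n^4$, and for this I would invoke Corollary \ref{cor1} rather than Theorem \ref{n3fold} directly. Under the stronger hypothesis $d>M^*(n)$, Corollary \ref{cor1} both sharpens the additive constant and shows that an extremal curve must lie on a hyperplane section of $X$; on such a section---the smooth subcanonical surface $S$ above---the genus is governed by the Noether-type estimate of Proposition \ref{nSnonintegra} together with the Hodge Index bound (\ref{HIT}), whose additive constant is $1$ up to the residue corrections coming from the division $d-1=mn+\epsilon$. Evaluating the refined constant furnished by Corollary \ref{cor1} at the input $(\pi_X,n)=(n+1,n)$ and absorbing these residue corrections bounds it above by $n^4$, which finishes the proof. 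The only real subtlety in the present deduction is checking that the residue terms are absorbed into $n^4$; the deeper work---proving that for $d>M^*(n)$ the extremal configuration is genuinely a curve on a degree-$n$ hyperplane section, so that the favorable subcanonical constant applies---is already carried out in Corollary \ref{cor1}.
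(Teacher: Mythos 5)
Your proposal is correct and takes essentially the same route as the paper: Corollary \ref{ncor3} is deduced there as an immediate consequence of Corollary \ref{cor1} (see Remark \ref{finalr}, $(i)$), using precisely the adjunction computation $\pi_X=n+1$ you carry out, so that $\frac{d}{2n}(2\pi_X-2-n)=\frac{d}{2}$. One simplification: there are no ``residue corrections'' to absorb and no appeal to (\ref{HIT}) or to the division $d-1=mn+\epsilon$ is needed, since Corollary \ref{cor1} already furnishes the constant $n^4$ verbatim in its case 1 and the negative term $-\frac{1}{n}\sqrt{d}$ in its case 2, both of which are at most $n^4$.
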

\smallskip
Corollary \ref{ncor3} appears to us to be a significant step forward
for the Castelnuovo bound conjecture
stated in \cite[Conjecture 1.1]{Liu}.

\smallskip
As for the  numerical assumption  $d\gg n$ in Theorem \ref{n3fold},
and the constant term appearing in the bound (\ref{nbound3fold}),
they are certainly not the best possible. One might hope to do
better with a closer examination of the numerical functions arising
in the proofs. We decided not to push here this investigation
further. The same remark applies to the assumption that the Picard
group is generated by a {\it very} ample divisor, and to other
numerical assumptions appearing in our subsequent results.

\smallskip
In the sequel we use standard notations in Algebraic Geometry, see
e.g. \cite{Hartshorne2}. Our base field is $\mathbb C$. By {\it
curve} we mean  a projective scheme of dimension $1$ (in \cite{Liu}
{\it curve} means a Cohen-Macaulay projective scheme of dimension
$1$).

\section{Bound for the genus of a curve on a surface.}

In this section we establish a Noether's type bound for the arithmetic
genus $p_a(C)$ of a projective scheme $C$ of dimension $1$ contained
in a projective scheme $S$ of dimension $2$. When $S$ is integral
(i.e. reduced and irreducible) we do not need any other hypothesis
on $S$. Otherwise, we have to assume that $S$ is contained in an
integral projective threefold $X\subseteq \mathbb P^r$, and that here $S$
is a Cartier divisor whose irreducible components are linearly
equivalent to a multiple of the hyperplane section of $X$ (see
Notations \ref{secondnot} below). The bound we prove, apart from the
constant term, is suggested by the Hodge Index Theorem (compare with
(\ref{HIT}) and see also Remark \ref{Halphen} below), it is sharp (compare with (\ref{boundintcompl})),
and it is more precise than the one that appears in \cite[Corollary 4.4]{Liu}  (see (\ref{lessLiu})).
First, in Proposition \ref{nSintegra} below, we examine the case $S$
is integral (i.e. reduced and irreducible). The proof relies on the
same argument developed in \cite[Lemma]{D3}.
This argument is classic in nature and does not require the wall-crossing method
(compare with \cite[Proof of Proposition 4.3]{Liu}).
Next, we argue by induction on the number of components of $S$.
Before stating our
result, we need some preliminaries.

\smallskip
\begin{notations}\label{primenotazioni}
$(i)$ By {\it curve} we mean  a projective scheme of dimension $1$.

\smallskip
$(ii)$ If $C$ is a curve, we define the {\it arithmetic genus
$p_a(C)$ of $C$} as $p_a(C)=1-\chi(\mathcal O_C)$. Observe that when
$C$ is integral then $p_a(C)\geq 0$.

\smallskip
$(iii)$ Let $Z\subseteq \mathbb P^r$ be a projective subscheme of
$\mathbb P^r$. We denote by $h_Z(i)$ the Hilbert function of $Z$ in
$\mathbb P^r$ at level $i\in \mathbb Z$. In other words
$$
h_Z(i)=\dim\left[{\text{Image}} \left(H^0(\mathbb P^r,\mathcal
O_{\mathbb P^r}(i))\to H^0(Z,\mathcal O_{Z}(i))\right)\right].
$$
Observe that $h_{Z}(i)=0$ when $i<0$, and that if $Z\subseteq
\mathbb P^{r-1}$ then the Hilbert function of $Z$ in $\mathbb P^r$
is equal to the Hilbert function of $Z$ in $\mathbb P^{r-1}$.

\smallskip
$(iv)$ Assume that  $\Gamma$ is a zero-dimensional subscheme of
$\mathbb P^r$ of length $d>0$. We have $h_{\Gamma}(0)=1$. Moreover,
$h_{\Gamma}(i)\leq d$ for every $i\in\mathbb Z$, and
$h_{\Gamma}(i)=d$ for $i\gg 0$. Set
$$
i_0=i_0(\Gamma)=\min\left\{i\in\mathbb Z\,:\,
h_{\Gamma}(i)=d\right\}.
$$
Observe that $i_0\geq 0$.
\end{notations}

\smallskip
\begin{lemma}\label{hilbseziperp}
Let $\Gamma$ be a zero-dimensional subscheme of $\mathbb P^r$ of
length $d>0$. Then $h_{\Gamma}(i)< h_{\Gamma}(i+1)$ for every $0\leq
i<i_0$. And $h_{\Gamma}(i)=d$ for every $i\geq i_0$. In particular,
$h_{\Gamma}(d-1)=d$.
\end{lemma}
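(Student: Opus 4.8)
The plan is to pass to the homogeneous coordinate ring of $\Gamma$ and exploit a general linear form. Write $S=\bC[x_0,\dots,x_r]$, let $I_\Gamma\subseteq S$ be the saturated homogeneous ideal of $\Gamma$, and set $R=S/I_\Gamma$. Since the kernel of the restriction map $H^0(\mathbb{P}^r,\mathcal{O}(i))\to H^0(\Gamma,\mathcal{O}_\Gamma(i))$ is exactly $(I_\Gamma)_i$, one has $h_\Gamma(i)=\dim_\bC R_i$ for every $i$. Because $\Gamma$ is zero-dimensional, its support is a finite set of points, so a general linear form $x_0$ vanishes at none of them. As $I_\Gamma$ is saturated, the irrelevant ideal is not associated to $R$, and hence the associated primes of $R$ are precisely the primes of the points of $\Gamma$; therefore such an $x_0$ is a non-zero-divisor on $R$.

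First I would establish monotonicity. Multiplication by $x_0$ gives, for each $i$, a short exact sequence
\begin{equation*}
0\longrightarrow R_i \xrightarrow{\;\cdot x_0\;} R_{i+1}\longrightarrow \overline{R}_{i+1}\longrightarrow 0,
\end{equation*}
where $\overline{R}=R/(x_0)$. Injectivity of $\cdot x_0$ (the non-zero-divisor property) yields $h_\Gamma(i)\le h_\Gamma(i+1)$, together with the identity $\dim_\bC\overline{R}_{i+1}=h_\Gamma(i+1)-h_\Gamma(i)$.

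The main point, and the real obstacle, is to show that the increase is \emph{strict} as long as $h_\Gamma(i)<d$. Here I would use that $\overline{R}$ is the homogeneous coordinate ring of the scheme $\Gamma\cap\{x_0=0\}$, which is empty by the choice of $x_0$; hence $\overline{R}$ is a standard graded $\bC$-algebra whose graded pieces vanish in large degree, and, being generated in degree one, it satisfies $\overline{R}_{j+1}=\overline{R}_1\cdot\overline{R}_{j}$. Consequently, if $\overline{R}_{i+1}=0$ for some $i$, then $\overline{R}_j=0$ for all $j\ge i+1$. Translating back through the exact sequence, if $h_\Gamma(i)=h_\Gamma(i+1)$ for some $i\geq 0$, then $h_\Gamma$ is constant for all indices $\ge i$; since $h_\Gamma(j)=d$ for $j\gg 0$, this forces $h_\Gamma(i)=d$, i.e. $i\ge i_0$. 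The contrapositive is exactly the strict inequality $h_\Gamma(i)<h_\Gamma(i+1)$ for $0\le i<i_0$, which is the first assertion.

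The remaining statements then follow formally. For $i\ge i_0$, monotonicity together with $h_\Gamma(i_0)=d$ gives $d\le h_\Gamma(i)\le d$, so $h_\Gamma(i)=d$. Finally, combining $h_\Gamma(0)=1$ with the strict increase on $[0,i_0)$ yields $h_\Gamma(i)\ge 1+i$ for $0\le i\le i_0$; in particular $d=h_\Gamma(i_0)\ge 1+i_0$, so $i_0\le d-1$, and therefore $h_\Gamma(d-1)=d$.
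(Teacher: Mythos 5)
Your proof is correct. All the key steps hold: with $I_\Gamma$ saturated one indeed has $h_\Gamma(i)=\dim_{\mathbb C}(S/I_\Gamma)_i$; a linear form missing the (finite) support avoids all associated primes of $R$ and is therefore a non-zero-divisor; the multiplication sequence gives monotonicity; and standard gradedness of $\overline R=R/(x_0)$ turns a single equality $h_\Gamma(i)=h_\Gamma(i+1)$ into eventual constancy, which forces $h_\Gamma(i)=d$ since $h_\Gamma(j)=d$ for $j\gg 0$. The final deductions ($h_\Gamma(i)=d$ for $i\ge i_0$, and $i_0\le d-1$ from $h_\Gamma(0)=1$ plus strict growth) are exactly right.

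The paper itself does not write out an argument at all: it simply cites the proof of Proposition 1.1 and Corollary 1.2 in Hartshorne's paper on the genus of space curves (which treats the case $r=2$) and remarks that the same argument works for $r\ge 3$. Hartshorne's argument is the sheaf-theoretic incarnation of yours: one chooses a general hyperplane (line, in his case) missing $\Gamma$ and exploits the exact sequences obtained by multiplication by the corresponding linear form on the ideal sheaf, the strict growth of $h_\Gamma$ up to $d$ being equivalent to the strict decay of $h^1(\mathcal I_\Gamma(i))$ down to $0$. So the underlying mechanism — a linear form avoiding the support acts injectively, and stabilization of the Hilbert function at one step propagates to all later steps — is the same; what your write-up buys is a self-contained, purely algebraic proof (saturated coordinate ring, non-zero-divisor, standard gradedness) that works uniformly in all $r$ without deferring to a reference, at the cost of being longer than the paper's two-line citation.
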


\begin{proof} When $r=2$, these properties follow from \cite[Proof of Proposition 1.1 and Corollary
1.2]{H}. The same argument of the proof applies also to the case
$r\geq 3$.
\end{proof}

Next Lemma \ref{riduzioneseziperp} appears in \cite[Corollary 3.2, p. 84]{EH}. We
prove it for Reader's convenience.

\smallskip
\begin{lemma}\label{riduzioneseziperp}
Let $C\subseteq \mathbb P^r$ be a curve of degree $d>0$, and let
$\Gamma$ be a general hyperplane section of $C$. Then we have:
$$
p_a(C)\leq \sum_{i=1}^{+\infty}d-h_{\Gamma}(i).
$$
\end{lemma}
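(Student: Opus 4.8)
The plan is to compare the first difference of the Hilbert function of $C$ with the Hilbert function of $\Gamma$, and then telescope. Write $h_C$ for the Hilbert function of $C$ in $\mathbb{P}^r$, and let $V_i\subseteq H^0(C,\mathcal{O}_C(i))$ denote the image of the restriction map $H^0(\mathbb{P}^r,\mathcal{O}_{\mathbb{P}^r}(i))\to H^0(C,\mathcal{O}_C(i))$, so that $\dim V_i=h_C(i)$. Since $\dim C=1$, a general hyperplane $H=\{\ell=0\}$ avoids the finitely many zero-dimensional associated points of $C$ and contains no one-dimensional component of $C$; hence $\ell$ is a non-zero-divisor on $\mathcal{O}_C$, and multiplication by $\ell$ is injective on every $H^0(C,\mathcal{O}_C(i))$, in particular on each $V_i$. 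I take $\Gamma=C\cap H$ to be the associated general hyperplane section, a zero-dimensional scheme of length $d$.

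The key step is the inequality $h_\Gamma(i)\leq h_C(i)-h_C(i-1)$ for every $i\geq 1$. To prove it, I consider the composite $V_i\hookrightarrow H^0(C,\mathcal{O}_C(i))\to H^0(\Gamma,\mathcal{O}_\Gamma(i))$ given by restriction to $\Gamma$. Because $\Gamma\subseteq C$ and $\Gamma\subseteq H\cong\mathbb{P}^{r-1}$, the image of this composite equals the image of $H^0(\mathbb{P}^r,\mathcal{O}_{\mathbb{P}^r}(i))\to H^0(\Gamma,\mathcal{O}_\Gamma(i))$, which by Notations \ref{primenotazioni}$(iii)$ has dimension exactly $h_\Gamma(i)$. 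On the other hand, every section of the form $\ell\cdot v$ with $v\in V_{i-1}$ restricts to zero on $\Gamma$, since $\ell$ vanishes on $H\supseteq\Gamma$; thus the kernel of the composite contains $\ell\,V_{i-1}$, a subspace of dimension $\dim V_{i-1}=h_C(i-1)$ by the injectivity recorded above. Therefore $h_\Gamma(i)\leq \dim V_i-\dim(\ell\,V_{i-1})=h_C(i)-h_C(i-1)$.

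Finally I would record the two boundary values $h_C(0)=1$ and, for $N\gg 0$, $h_C(N)=\chi(\mathcal{O}_C(N))=dN+1-p_a(C)$: for large $N$ the restriction map is surjective and $H^1(\mathcal{O}_C(N))=0$, while the Hilbert polynomial of the degree-$d$ curve $C$ is $dN+\chi(\mathcal{O}_C)=dN+1-p_a(C)$. Summing the key inequality over $1\leq i\leq N$ telescopes the right-hand side to $h_C(N)-h_C(0)=dN-p_a(C)$, whereas the left-hand side equals $Nd-\sum_{i=1}^{N}\big(d-h_\Gamma(i)\big)$. By Lemma \ref{hilbseziperp} the terms $d-h_\Gamma(i)$ vanish for $i\geq i_0(\Gamma)$, so for $N$ large the last sum is the full series $\sum_{i=1}^{+\infty}\big(d-h_\Gamma(i)\big)$. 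Rearranging the resulting inequality $Nd-\sum_{i=1}^{+\infty}(d-h_\Gamma(i))\leq dN-p_a(C)$ yields $p_a(C)\leq\sum_{i=1}^{+\infty}\big(d-h_\Gamma(i)\big)$, as desired.

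The one delicate point, and the place where the full generality of $C$ (possibly non-reduced, with embedded points) really enters, is the choice of $H$ in the first paragraph: one must take $H$ general enough to miss all associated points of $C$, so that $\ell$ is a non-zero-divisor on $\mathcal{O}_C$ and the injectivity of multiplication by $\ell$ holds. Granting this, everything else is formal linear algebra together with the telescoping identity and the elementary asymptotics of $h_C$.
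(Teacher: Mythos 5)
Your proof is correct and follows essentially the same route as the paper: the key first-difference inequality $h_{\Gamma}(i)\leq h_C(i)-h_C(i-1)$, telescoped against the asymptotic value $h_C(N)=dN+1-p_a(C)$ for $N\gg 0$, is exactly the paper's argument. The only difference is that where the paper simply cites \cite[Lemma 3.1, p. 83]{EH} for that inequality, you prove it directly via multiplication by the linear form $\ell$ defining $H$ (using that a general hyperplane misses the associated points of $C$, so $\ell$ is a nonzerodivisor and $\ell V_{i-1}$ injects into the kernel of restriction to $\Gamma$), which makes the argument self-contained but does not change the approach.
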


\begin{proof}
Fix an integer $j \gg 0$.  We have:
$$
1-p_a(C)+jd=h^0(C,\mathcal O_C(j))-h^1(C,\mathcal
O_C(j))=h^0(C,\mathcal O_C(j))=h_{C}(j).
$$
On the other hand, by \cite[Lemma 3.1, p. 83]{EH} we know that for
every $i\in \mathbb Z$
$$
h_{C}(i)-h_{C}(i-1)\geq h_{\Gamma}(i).
$$
Therefore, we have
$$
h_C(j)=\sum_{i=0}^{j}h_C(i)-h_C(i-1)\geq
\sum_{i=0}^{j}h_{\Gamma}(i).
$$
Taking into account that $h_{\Gamma}(0)=1$ and that
$h_{\Gamma}(i)=d$ for $i\geq j$, it follows that
$$
p_a(C)=1+jd-h_C(j)\leq
1+jd-\sum_{i=0}^{j}h_{\Gamma}(i)=\sum_{i=1}^{j}d-h_{\Gamma}(i)=\sum_{i=1}^{+\infty}d-h_{\Gamma}(i).
$$
\end{proof}

\smallskip
\begin{lemma}\label{planegenus}
Let $C\subseteq \mathbb P^r$ be a curve of degree $d>0$. Then we
have:
$$
p_a(C)\leq \binom{d-1}{2}.
$$
\end{lemma}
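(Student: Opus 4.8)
The plan is to combine the two preceding lemmas. Let $\Gamma$ be a general hyperplane section of $C$; since $\deg C=d>0$, this $\Gamma$ is a zero-dimensional subscheme of length $d$ sitting in a hyperplane $\mathbb{P}^{r-1}$. By Lemma \ref{riduzioneseziperp} we already have
$$
p_a(C)\leq \sum_{i=1}^{+\infty}\bigl(d-h_{\Gamma}(i)\bigr),
$$
so the entire problem reduces to controlling how slowly the Hilbert function $h_{\Gamma}$ is allowed to grow, and this is exactly what Lemma \ref{hilbseziperp} supplies.

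First I would record the shape of $h_{\Gamma}$ coming from Lemma \ref{hilbseziperp}: one has $h_{\Gamma}(0)=1$, the function is strictly increasing on the integers $0\leq i<i_0$, and $h_{\Gamma}(i)=d$ for all $i\geq i_0$. Two consequences are immediate. Since the summand $d-h_{\Gamma}(i)$ vanishes for $i\geq i_0$, the infinite sum is really a finite sum over $1\leq i\leq i_0$. And since $h_{\Gamma}$ increases by at least one integer at each step starting from $1$, an easy induction gives $h_{\Gamma}(i)\geq i+1$ for $0\leq i\leq i_0$; evaluating at $i=i_0$ forces $i_0\leq d-1$.

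Next I would feed these two facts into the sum. The bound $h_{\Gamma}(i)\geq i+1$ yields $d-h_{\Gamma}(i)\leq d-1-i$ on the range $1\leq i\leq i_0$, and the inequality $i_0\leq d-1$ guarantees that every term $d-1-i$ is nonnegative there. Hence I may enlarge the range of summation up to $d-1$, adding only nonnegative terms, and evaluate the resulting arithmetic series:
$$
p_a(C)\leq \sum_{i=1}^{i_0}\bigl(d-1-i\bigr)\leq \sum_{i=1}^{d-1}\bigl(d-1-i\bigr)=\binom{d-1}{2}.
$$

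I do not anticipate a genuine obstacle: once Lemma \ref{riduzioneseziperp} and Lemma \ref{hilbseziperp} are in hand the argument is purely combinatorial. The only point worth flagging is conceptual rather than technical: the extremal value $\binom{d-1}{2}$ is attained exactly when $h_{\Gamma}$ grows by precisely one at every level up to $i_0=d-1$, which is the Hilbert function of $d$ points in linearly general position, i.e.\ the plane-curve case of (\ref{genusplane}). This is what makes $\binom{d-1}{2}$ both the correct target and the natural stopping point for the chain of inequalities.
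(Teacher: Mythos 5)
Your proof is correct and follows essentially the same route as the paper: combine Lemma \ref{riduzioneseziperp} with Lemma \ref{hilbseziperp}, use the strict growth of $h_{\Gamma}$ to get $h_{\Gamma}(i)\geq i+1$ (hence $i_0\leq d-1$), and bound the resulting finite sum by $\sum_{i=1}^{d-1}(d-1-i)=\binom{d-1}{2}$. The paper compresses this into a single chain of inequalities; your write-up merely makes explicit the intermediate steps (the lower bound on $h_{\Gamma}$, the cutoff $i_0\leq d-1$, and the nonnegativity needed to extend the summation range), which is exactly what the paper leaves implicit.
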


\begin{proof} Combining Lemma \ref{riduzioneseziperp} with Lemma
\ref{hilbseziperp} we get:
$$
p_a(C)\leq \sum_{i=1}^{+\infty}d-h_{\Gamma}(i)\leq
\sum_{i=1}^{d-1}d-i-1=\binom{d-1}{2}.
$$
\end{proof}

\medskip
\begin{lemma}\label{trecarte} Let $d\geq 1$, $s\geq 2$, $\pi\geq 0$ and $\varphi\geq 1$ be integers. Then we have:
$$
d\leq \sqrt{2\varphi}\quad \implies \quad \binom{d-1}{2}\leq
\frac{d^2}{2s}+\frac{d}{2s}(2\pi-2-s)+\varphi.
$$
\end{lemma}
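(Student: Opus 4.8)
The plan is to regard the claimed inequality as the assertion that the quadratic polynomial
$$g(d):=\frac{d^2}{2s}+\frac{d}{2s}(2\pi-2-s)+\varphi-\binom{d-1}{2}$$
is nonnegative throughout the range allowed by the hypothesis. First I would read off the coefficient of $d^2$ in $g$, namely $\frac{1}{2s}-\frac12=\frac{1-s}{2s}$, which is strictly negative because $s\geq 2$. Hence $g$ is a \emph{concave} function of the real variable $d$, so on any interval it attains its minimum at an endpoint. Since the hypothesis $d\leq\sqrt{2\varphi}$ together with $d\geq 1$ places $d$ in the interval $[1,\sqrt{2\varphi}]$ (nonempty because $\varphi\geq 1$), it suffices to verify $g(1)\geq 0$ and $g(\sqrt{2\varphi})\geq 0$, interpreting $\binom{d-1}{2}$ as the polynomial $\frac{(d-1)(d-2)}{2}$ for real arguments.

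For the left endpoint, direct substitution gives $g(1)=\varphi+\frac{2\pi-1-s}{2s}$. Using $\pi\geq 0$ one has $\frac{2\pi-1-s}{2s}\geq-\frac{1+s}{2s}$, and for $s\geq 2$ the quantity $\frac{1+s}{2s}=\frac12+\frac{1}{2s}$ is at most $\frac34$; combined with $\varphi\geq 1$ this yields $g(1)\geq \frac14>0$.

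For the right endpoint I would set $D=\sqrt{2\varphi}$, so that $\varphi=\frac{D^2}{2}$ and the combination $\frac{D^2}{2}-\binom{D-1}{2}$ collapses to $\frac{3D-2}{2}$. Clearing the positive denominator $2s$ then reduces $g(D)\geq 0$ to
$$D^2+2\pi D-2D+2s(D-1)\geq 0.$$
Here $2\pi D\geq 0$ and $2s(D-1)\geq 4(D-1)$ (since $s\geq 2$ and $D\geq 1$), so the left-hand side is bounded below by $D^2+2D-4$, which is nonnegative as soon as $D\geq \sqrt5-1\approx 1.24$; and indeed $D=\sqrt{2\varphi}\geq\sqrt2$ because $\varphi\geq 1$. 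This establishes both endpoint inequalities, and concavity finishes the argument.

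The only mildly delicate point is that the right endpoint $\sqrt{2\varphi}$ need not be an integer, so $\binom{d-1}{2}$ must be read as $\frac{(d-1)(d-2)}{2}$ throughout; this is harmless, since both sides of the claimed inequality are polynomial in $d$, and it is exactly what legitimizes the concavity reduction. I expect the main (quite modest) obstacle to be bookkeeping the sign conditions $\pi\geq 0$, $s\geq 2$, $\varphi\geq 1$ carefully enough that the two crude endpoint estimates above go through, thereby sidestepping any parity case-split on $d$.
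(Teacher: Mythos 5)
Your proof is correct, but it follows a genuinely different route from the paper's. The paper first uses $\pi\geq 0$ and $s\geq 2$ to bound the linear coefficient, $\frac{1}{2s}(2\pi-2-s)\geq -1$, thereby reducing the claim to $\binom{d-1}{2}\leq \frac{d^2}{2s}-d+\varphi$; rearranged, this reads $d^2\frac{s-1}{2s}\leq \frac{d}{2}-1+\varphi$, which for $d\geq 2$ follows from the chain $d^2\leq 2\varphi\leq \varphi\frac{2s}{s-1}$, and for $d=1$ by direct substitution. You instead treat the difference $g(d)$ as a real quadratic, observe it is concave (leading coefficient $\frac{1-s}{2s}<0$), and reduce to checking the two endpoints $d=1$ and $d=\sqrt{2\varphi}$ of the admissible interval; both endpoint computations are carried out correctly, and your remark that $\binom{d-1}{2}$ must be read as $\frac{(d-1)(d-2)}{2}$ (which agrees with the binomial coefficient at every integer $d\geq 1$) is exactly the point that legitimizes the reduction. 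What the paper's route buys is brevity: after discarding $\pi$, the comparison $d^2\leq 2\varphi$ against the leading term settles everything at once, with only a trivial $d=1$ check left over. What your route buys is a reusable structural principle — concavity pushes the minimum to the endpoints — so you never need to find the "right" algebraic simplification, at the modest cost of a somewhat longer computation at the endpoint $d=\sqrt{2\varphi}$. Note also that both arguments end up isolating $d=1$ in some form (your check $g(1)\geq\frac14$ plays the role of the paper's substitution), which is unavoidable since the crude estimate $\frac{d}{2}-1\geq 0$ fails exactly there.
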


\begin{proof} Assume  $d\leq \sqrt{2\varphi}$. Since $s\geq 2$ and $\pi\geq 0$ we
have
$$
\frac{1}{2s}(2\pi-2-s)\geq -1.
$$
Therefore, it suffices to prove that
$$
\binom{d-1}{2}\leq \frac{d^2}{2s}-d+\varphi.
$$
This is equivalent to prove that
\begin{equation}\label{final}
d^2\frac{s-1}{2s}\leq \frac{1}{2}d-1+\varphi.
\end{equation}
Therefore, when $d\geq 2$, it suffices to prove that
$$
d^2\leq \varphi \frac{2s}{s-1}.
$$
And this holds true because
$$
2\varphi \leq\varphi \frac{2s}{s-1}.
$$
If $d=1$ then (\ref{final}) follows by direct substitution, taking
into account that $\varphi\geq 1$.
\end{proof}

\smallskip
We are in position to prove our bound for the genus of a curve
contained in a reduced, irreducible and projective surface
$S\subseteq \mathbb P^r$. The {\it sectional genus} of such a
surface is the arithmetic genus of the integral curve $S\cap H$,
where $H\subseteq \mathbb P^r$ is a general hyperplane.

\medskip
\begin{proposition}\label{nSintegra} Let $S\subseteq \mathbb P^r$ be
a reduced, irreducible and projective surface, of degree $s$ and
sectional genus $\pi$. Let $C\subset S$ be a curve of degree $d>0$.
Then
\begin{equation}\label{bound0}
p_a(C)\leq \frac{d^2}{2s}+\frac{d}{2s}(2\pi-2-s)+s^4.
\end{equation}
\end{proposition}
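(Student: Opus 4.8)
The plan is to bound $p_a(C)$ by estimating the Hilbert function of a general hyperplane section $\Gamma = C \cap H$, which is a zero-dimensional subscheme of length $d$ lying on the integral plane curve $S \cap H \subseteq \mathbb{P}^{r-1}$ of degree $s$ and arithmetic genus $\pi$. By Lemma \ref{riduzioneseziperp}, it suffices to control $\sum_{i \geq 1} \left( d - h_\Gamma(i) \right)$. The key structural input is that $\Gamma$ sits on a curve of degree $s$, so the growth of $h_\Gamma$ is governed by two regimes: for small $i$, the Hilbert function of $\Gamma$ cannot grow faster than that of the ambient integral curve $S \cap H$, whose Hilbert function increment is eventually the constant $s$; while Lemma \ref{hilbseziperp} guarantees strict growth $h_\Gamma(i) < h_\Gamma(i+1)$ until the value $d$ is reached at level $i_0$.

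First I would make precise the comparison with the plane section curve $D = S \cap H$. Since $D$ is an integral curve of degree $s$ in $\mathbb{P}^{r-1}$ with arithmetic genus $\pi$, and $\Gamma \subseteq D$, the restriction maps give $h_\Gamma(i) \leq h_D(i)$. For $i$ large the first differences of $h_D$ stabilize at $s$, and more precisely one has $h_D(i) - h_D(i-1) = s$ once $i$ is past the regularity of $D$; combining this with the relation between $h_D$ and the arithmetic genus $\pi$ of $D$ yields the linear-in-$d$ term $\frac{d}{2s}(2\pi - 2 - s)$. This is exactly the mechanism that reproduces the Hodge-Index-predicted slope in \eqref{HIT}, and it is the heart of the argument adapted from \cite[Lemma]{D3}.

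The main obstacle, and the reason the crude estimate of Lemma \ref{planegenus} must be refined, is handling the two different ranges of $i$ simultaneously and controlling the resulting constant so that it fits under $s^4$. When $d$ is large relative to $s$ the sum splits into an initial range where $h_\Gamma$ climbs (bounded using the $+1$ strict-increase from Lemma \ref{hilbseziperp} together with the degree-$s$ constraint) and a tail range where $d - h_\Gamma(i)$ is controlled by the defect between $\Gamma$ and the full linear series on $D$. I expect the delicate point to be the uniform bound on the error term: one wants the accumulated discrepancy to be absorbed into the constant $s^4$ regardless of the congruence class of $d-1$ modulo $s$. The role of Lemma \ref{trecarte} is precisely to dispose of the complementary small-$d$ case: when $d \leq \sqrt{2\varphi}$ with $\varphi = s^4$, the trivial plane bound $\binom{d-1}{2}$ from Lemma \ref{planegenus} already lies below the right-hand side of \eqref{bound0}, so no curve-on-surface analysis is needed there.

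Thus the proof naturally divides into two cases. In the case $d \leq s^2 = \sqrt{2 s^4}$, I would simply invoke Lemma \ref{planegenus} and Lemma \ref{trecarte} with $\varphi = s^4$ to conclude immediately. In the complementary case $d > s^2$, I would carry out the Hilbert-function estimate above: bound $\sum_{i \geq 1}\left( d - h_\Gamma(i) \right)$ by separating the contribution where $h_\Gamma(i)$ is still growing from the contribution of the stable regime, using the degree-$s$ bound $h_\Gamma(i) \geq \min(d, si + 1 - \text{(genus defect)})$ type inequalities, and then verify by a direct computation that the total is at most $\frac{d^2}{2s} + \frac{d}{2s}(2\pi - 2 - s) + s^4$. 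The generous constant $s^4$ leaves ample room to absorb the rounding errors from the $\epsilon$ arising in $d-1 = ms + \epsilon$, so I would not attempt to optimize it.
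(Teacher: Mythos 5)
Your proposal follows the same route as the paper's proof: reduction to the general hyperplane section via Lemma \ref{riduzioneseziperp}, disposal of the small-degree case by Lemma \ref{planegenus} and Lemma \ref{trecarte} with $\varphi=s^4$, and, for large $d$, an adaptation of \cite[Lemma]{D3} with Lemma \ref{hilbseziperp} controlling the tail. But there is a genuine gap at the crucial step, namely the mechanism that transfers information from the integral curve $\Sigma=S\cap H$ to the zero-dimensional scheme $\Gamma=C\cap H$. The only comparison you actually justify is $h_\Gamma(i)\leq h_\Sigma(i)$, and this points the wrong way: to bound $p_a(C)\leq\sum_{i\geq1}(d-h_\Gamma(i))$ from above one needs \emph{lower} bounds on $h_\Gamma$. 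The lower bound you then invoke, $h_\Gamma(i)\geq\min\{d,\,si+1-\pi\}$, is not a valid general ``degree-$s$ bound'': for small $i$ it fails simply because $h_\Gamma(i)\leq\binom{r-1+i}{i}$ (take $\Sigma$ a smooth rational curve of degree $s\geq 4$ in $\mathbb P^3$, where $h_\Sigma(1)=4<s+1$), and more generally it fails whenever $h^1(H,\mathcal I_{\Sigma,H}(i))>h^1(\Sigma,\mathcal O_\Sigma(i))$. With only the trivial strict growth $h_\Gamma(i)\geq i+1$ from Lemma \ref{hilbseziperp}, you recover nothing better than the plane bound $\binom{d-1}{2}$ of Lemma \ref{planegenus}, whose leading term $d^2/2$ is far above $d^2/2s$.

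The missing idea is the Bezout-type \emph{equality} (\ref{Bezout}) of the paper: writing $d-1=ms+\epsilon$, one has $h_\Gamma(i)=h_\Sigma(i)$ for every $i\leq m$, because a form of degree $i\leq m$ vanishing on $\Gamma$ vanishes on a subscheme of length $d>is$ of the \emph{integral} curve $\Sigma$, hence on all of $\Sigma$. This is the only place where integrality of $S$ enters, and it is the substitute for the uniform position principle, which is unavailable here since $C$ may be reducible or non-reduced. Once this equality is in place, $\sum_{i=1}^{m}(d-h_\Sigma(i))$ is computed exactly as in \cite[Lemma]{D3}, whose deficiency terms (involving $h^1(H,\mathcal I_{\Sigma,H}(i))$ and the Hilbert function of the general hyperplane section of $\Sigma$) are bounded in total by $s^3$ --- they must be estimated, not assumed away as mere ``rounding errors from $\epsilon$''. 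Likewise, the tail $\sum_{i>m}(d-h_\Gamma(i))$ is bounded by $(\pi+s)(\pi+\epsilon)\leq\left(\binom{s-1}{2}+s\right)^2$ using Lemma \ref{hilbseziperp} together with $d-h_\Gamma(m)=\pi+\epsilon$, and this last value again rests on the equality $h_\Gamma(m)=h_\Sigma(m)$ plus the vanishing $h^1(H,\mathcal I_{\Sigma,H}(m))=0$ from \cite{GLP}; the latter is the reason the paper takes the threshold $d>s^2+s$ rather than your $d>s^2$. So your skeleton matches the paper's, but the step you cite as a known inequality is precisely where the work, and the hypothesis that $S$ is integral, reside.
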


\begin{proof} If $s=1$, then (\ref{bound0}) follows from Lemma \ref{planegenus}.
So, we may assume $s\geq 2$. We may assume that $S$ is non degenerate
in $\mathbb P^r$. Therefore, we may also assume that $r\geq 3$.
Observe that if $s\geq 3$ and $d\leq s^2+s$ then $d\leq
\sqrt{2s^4}$. Combining Lemma \ref{planegenus} with Lemma
\ref{trecarte} we deduce
$$
p_a(C)\leq \binom{d-1}{2}\leq
\frac{d^2}{2s}+\frac{d}{2s}(2\pi-2-s)+s^4.
$$
By direct computation we see that previous inequality holds true
also when $s=2$ and $d\leq s^2+s=6$. Therefore, we may assume also
that $d>s^2+s$. This will allow us to apply \cite[Lemma]{D3} in a
little while.

\smallskip
Let $H\subset \mathbb P^r$ be a general hyperplane. Set:
$$
\Sigma=S\cap H\quad{\text{and}}\quad \Gamma=C\cap H.
$$
Observe that $\Sigma$ is a non degenerate integral curve of degree
$s$ in $H\cong \mathbb P^{r-1}$, and that $\Gamma$ is a
zero-dimensional scheme of length $d$. Since $\Gamma\subset \Sigma$
we have $h_{\Gamma}(i)\leq h_{\Sigma}(i)$ for every $i$. In view of
Lemma \ref{riduzioneseziperp} it suffices to prove that
$$
\sum_{i=1}^{+\infty}d-h_{\Gamma}(i)\leq
\frac{d^2}{2s}+\frac{d}{2s}(2\pi-2-s)+s^4.
$$
To this purpose, divide
\begin{equation}\label{divide}
d-1=ms+\epsilon,\quad 0\leq \epsilon\leq s-1.
\end{equation}
Since $\Sigma$ is an integral curve, by degree considerations, we
have
\begin{equation}\label{Bezout}
h_{\Gamma}(i)=h_{\Sigma}(i)\quad {\text{for every $i\leq m$.}}
\end{equation}
Therefore, we may split
\begin{equation}\label{2.2}
\sum_{i=1}^{+\infty}d-h_{\Gamma}(i)=\sum_{i=1}^{m}d-h_{\Sigma}(i)+\sum_{i=m+1}^{+\infty}d-h_{\Gamma}(i).
\end{equation}
The first summand depends only on $d$ and $\Sigma$. By \cite[Proof
of Lemma, p. 795, line 8 from above]{D3} we know that
$$
\sum_{i=1}^{m}d-h_{\Sigma}(i)=\binom{m}{2}s+m(\epsilon +
\pi)-\sum_{i=1}^{+\infty}(i-1)(s-h_{\Sigma'}(i))-\delta_i),
$$
where $\Sigma'$ is a general hyperplane section of $\Sigma$, and
$\delta_i$ is the dimension of the kernel of the natural map
$H^1(H,\mathcal I_{\Sigma,H}(i-1))\to H^1(H,\mathcal
I_{\Sigma,H}(i))$. Replacing $m$ with $\frac{d-1-\epsilon}{s}$, and
taking into account (\ref{2.2}), we get
$$
\sum_{i=1}^{+\infty}d-h_{\Gamma}(i)=\frac{d^2}{2s}+\frac{d}{2s}(2\pi-2-s)+\rho,
$$
where
$$
\rho=\frac{1+\epsilon}{2s}(s+1-\epsilon-2\pi)-\sum_{i=1}^{+\infty}(i-1)(s-h_{\Sigma'}(i)))
+\sum_{i=1}^{+\infty}(i-1)\delta_i+\sum_{i=m+1}^{+\infty}d-h_{\Gamma}(i).
$$
So,
\begin{equation}\label{rho}
{\text{it suffices to prove that $\rho\leq s^4$.}}
\end{equation}
The first three summands depend only on $\Sigma$, and by
\cite[Lemma, (2.5), (2.7), (2.8)]{D3} we can estimate their sum:
\begin{equation}\label{3terms}
\left[\frac{1+\epsilon}{2s}(s+1-\epsilon-2\pi)-\sum_{i=1}^{+\infty}(i-1)(s-h_{\Sigma'}(i)))
+\sum_{i=1}^{+\infty}(i-1)\delta_i\right]\leq s^3.
\end{equation}
It remains to estimate the last term, the tail:
$$
\sum_{i=m+1}^{+\infty}d-h_{\Gamma}(i).
$$
Here we observe that, since $C$ may be reducible or not reduced, one
cannot estimate the Hilbert function $h_{\Gamma}$ via uniform
position as in the classical Castelnuovo Theory \cite{EH},
\cite{CCD}. We may overlap this difficulty using Lemma
\ref{hilbseziperp}. Observe also that $\Sigma$ is integral. Hence,
classical Castelnuovo Theory applies to $\Sigma$.

\smallskip
Since $d>s^2+s$, by Castelnuovo Theory we have \cite[Theorem, $(i)$, p. 492]{GLP},
\cite[Theorem (3.7), p. 87]{EH}:
$$
h^1(H,\mathcal I_{\Sigma, H}(m))=h^1(\Sigma,\mathcal
O_{\Sigma}(m))=0.
$$
Hence, we have:
$$
h_{\Sigma}(m)=h^0(\Sigma,\mathcal O_{\Sigma}(m))=h^0(\Sigma,\mathcal
O_{\Sigma}(m))-h^1(\Sigma,\mathcal O_{\Sigma}(m))=1-\pi+ms.
$$
By (\ref{Bezout}) we deduce:
$$
h_{\Gamma}(m)=h_{\Sigma}(m)=1-\pi+ms.
$$
Therefore, by Lemma \ref{hilbseziperp} we obtain
$$
h_{\Gamma}(m+j)=d\quad {\text{for every $j\geq \pi+s$.}}
$$
Hence,
$$
\sum_{i=m+1}^{+\infty}d-h_{\Gamma}(i)=\sum_{i=m+1}^{m+\pi+s}d-h_{\Gamma}(i).
$$
On the other hand, again by Lemma \ref{hilbseziperp}, for every
$j\geq 0$ we have
$$
d-h_{\Gamma}(m+j)\leq d-h_{\Gamma}(m)=\pi+\epsilon.
$$
Hence, taking into account Lemma \ref{planegenus} and
(\ref{divide}), we get:
$$
\sum_{i=m+1}^{+\infty}d-h_{\Gamma}(i)=\sum_{i=m+1}^{m+\pi+s}d-h_{\Gamma}(i)\leq
(\pi+s)(\pi+\epsilon)\leq (\pi+s)^2\leq
\left(\binom{s-1}{2}+s\right)^2.
$$
Combining with (\ref{rho}) and (\ref{3terms}), we are done. In fact,
since $s\geq 2$, we have
$$
\rho \leq  s^3+\left(\binom{s-1}{2}+s\right)^2\leq s^4.
$$
\end{proof}

\begin{remark}\label{Halphen}
Keep the notations of Proposition \ref{nSintegra}.

\smallskip
$(i)$ If $C=S\cap F$ is the complete intersection of $S$ with a
hypersurface $F$ of degree $m+1$, then $C$ is a curve of degree
$d=(m+1)s$, and
$$
p_a(C)=\frac{d^2}{2s}+\frac{d}{2s}(2\pi-2-s)+1
$$
(compare with (\ref{Sab2}) below).
This proves that, apart the constant term, the bound (\ref{bound0})
is sharp.

\smallskip
$(ii)$ If $S$ is smooth and subcanonical, i.e. the canonical bundle
of $S$ is numerically equal to a rational multiple of the hyperplane
section, using Hodge Index Theorem we obtain  \cite[p. 196]{Harris1980}
(compare with (\ref{HIT})):
$$
p_a(C)\leq \frac{d^2}{2s}+\frac{d}{2s}(2\pi-2-s)+1.
$$
It is an open question whether this bound holds true also when $S$
is an integral complete intersection surface, possibly singular,  $C$ is integral, and $d\gg s$ (compare
with \cite{D3}).

\smallskip
$(iii)$ Assume $S\subset \mathbb P^3$. In this case $\Sigma$ is a
plane curve of degree $s$. Therefore, we have:
$$
\pi=\binom{s-1}{2},
$$
and so (compare with \cite[Corollary 4.15]{Liu}):
$$
\frac{d}{2s}(2\pi-2-s)=\frac{d}{2}(s-4).
$$

\smallskip
$(iv)$ Assume $S\subset \mathbb P^3$. With the same notations as in
(\ref{divide}), set:
$$
\epsilon_1:=\frac{(s-1-\epsilon)(1+\epsilon)(s-1)}{2s}.
$$
Then, using Bridgeland Theory, one may prove \cite[Lemma 4.1]{Liu}:
$$
p_a(C)\leq \frac{d^2}{2s}+\frac{d}{2s}(2\pi-2-s)+1-\epsilon_1.
$$
In the case $C$ is integral and $d>s^2-s$, this is the celebrated
Halphen-Noether bound (see \cite{Harris1980}, \cite{GP}, and compare with
Theorem \ref{HGP}). On certain
surfaces, when $d>s^2-s$, the bound is sharp. In the case $C$ is
integral and $d\leq s^2-s$, the bound may be not sharp. It should be
corrected \cite{Harris1980}.


\smallskip
$(v)$ In view of Castelnuovo's bound (\ref{Sabattino}), if $\Sigma\subset
\mathbb P^{r-1}$ is a non degenerate integral curve of arithmetic
genus $\pi$, degree $s\geq r-1$, with $r-1\geq 3$, then
$$
\pi<\binom{s-1}{2}.
$$
Therefore, if $S\subset \mathbb P^r$ is non degenerate and $r\geq
4$, then (compare with (\ref{lessLiu})):
$$
\frac{d}{2s}(2\pi-2-s)<\frac{d}{2s}\left(2\binom{s-1}{2}-2-s\right)=\frac{d}{2}(s-4).
$$
This explains why, for $d\gg 0$, the bound (\ref{bound0}) is more precise
than the one that appears in \cite[Corollary 4.4]{Liu}.
\end{remark}

\bigskip
Next, we turn our attention to the case the surface $S$ is not
integral. In this case, we have to assume that $S$ is contained in an
integral projective threefold $X\subseteq \mathbb P^r$, and that $S$
is a Cartier divisor whose irreducible components are linearly
equivalent to a multiple of the hyperplane section of $X$.
Before stating our results, we need some preliminaries.

\begin{notations}\label{secondnot}
$(i)$ Fix a projective integral variety $X\subset \mathbb P^r$ of
dimension $\dim X=3$, and degree $\deg X=n$. Let $D_1,D_2,\dots,D_l$
effective Cartier divisors of $X$. We identify  $D_1,D_2,\dots,D_l$
with the associated closed subschemes of $X$. Therefore, we may
think of $D_1,D_2,\dots,D_l$ as  projective surfaces contained in
$X$. We assume every surface $D_i$ is integral, and that $D_i\neq
D_j$ when $i\neq j$. Set $s_i=\deg D_i$. We assume
$$
s_1\leq s_2\leq \dots\leq s_l.
$$
Set
$$
s=s_1 =\min\{s_1,s_2,\dots,s_l\}.
$$
We assume each $D_i$ linearly equivalent to a multiple of the
general hyperplane section $H$ of $X$:
\begin{equation}\label{le}
D_i\sim a_iH.
\end{equation}
Observe that $s_i=a_in$. We denote by $\pi_i$ the sectional genus of
the surface $D_i$, and by $\pi_X$ the arithmetic genus of the general
linear section of $X$ of dimension $1$, i.e. the sectional genus of $H$.
If $D_i'$ is the general hyperplane section of $D_i$, we have the natural exact sequence
$0\to \mathcal O_H(-a_i)\to \mathcal O_H\to \mathcal O_{D_i'}\to 0$. It follows that
\begin{equation}\label{Sab2}
\pi_i=\frac{s_i^2}{2n}+\frac{s_i}{2n}(2\pi_X-2-n)+1.
\end{equation}
In particular, we have
$$
s_i=s_j\quad\implies\quad \pi_i=\pi_j.
$$
Moreover, since $0\leq \pi_i\leq \binom {s_i-1}{2}$, for every $i$
we have
\begin{equation}\label{stimapi}
-2\leq \frac{2\pi_i-2-s_i}{2s_i}\leq s_i.
\end{equation}
Let $f_i$ be a local generator of the ideal sheaf $\mathcal I_{D_i}$
of $D_i$ in $X$. Fix positive integers $r_1,r_2,\dots,r_l$. Let $S$
be the effective Cartier divisor of $X$ with local generator given
by
$$
f_1^{r_1}\cdot f_2^{r_2}\cdot\dots\,\cdot f_l^{r_l}.
$$
We will use the notation
$$
S=r_1D_1+ r_2D_2+\dots+ r_lD_l.
$$
Set
$$
\deg S=r_1s_1+ r_2s_2+\dots+ r_ls_l.
$$
We identify $S$ with its associated subscheme of $X$. Therefore, we
may think of $S$ as a projective surface contained in $X$.

\smallskip
$(ii)$ Fix a surface $S$ as above, and an integer $d>0$. Let
$\mathcal C(S,d)$ ($\mathcal C^*(S,d)$ resp.) be the set of all
curves $C$ of degree $d$ contained in $S$ (in some integral
component $D_i$ of $S$ of minimal degree $s$ resp.). Set
$$
G(S,d)=\max\left\{p_a(C)\,:\, C\in \mathcal C(S,d)\right\}\quad
{\text{and}}\quad G^*(S,d)=\max\left\{p_a(C)\,:\, C\in \mathcal
C^*(S,d)\right\}.
$$

\smallskip
$(iii)$ If $Y$ is a closed subscheme of $X$, we denote by $\mathcal
I_Y$ its ideal sheaf in $X$.
\end{notations}

In the following we keep the notations and the assumptions just
stated.

\smallskip
In the first statement Proposition \ref{nSnonintegra} we prove a
bound for the arithmetic genus of  a curve of $S$ of degree $d>0$.
In the second statement Proposition \ref{refined} we refine previous
bound under the assumption $d\gg \deg S$. These results should be
compared with \cite[Proposition 4.3 and Corollary 4.4]{Liu}. Our
results, in general, are more precise than
\cite[loc. cit.]{Liu} (compare with (\ref{lessLiu})).  In fact, by Remark \ref{Halphen}, $(i)$,
apart the constant term, the bounds appearing in  Proposition
\ref{nSnonintegra} and Proposition \ref{refined} are sharp, at least
when $d$ is a multiple of the minimal degree $s$. Moreover, unlike
\cite{Liu}, our argument in the proofs do not need Bridgeland
Theory. The following  Corollary \ref{CY} and Corollary \ref{P3} are
immediate consequences of Proposition \ref{refined}, taking into
account Remark \ref{Halphen}, $(i)$, $(ii)$, $(iv)$. Also Corollary \ref{intcompl} is a
consequence  of Proposition \ref{refined}, apart from one detail, so
we give the proof. As for the the constant term $4(\deg S)^6$
appearing in the bound (\ref{bound1}) of Proposition
\ref{nSnonintegra},  the numerical assumption $d>30(\deg S)^7$
appearing in the claim of Proposition \ref{refined}, Corollary
\ref{intcompl}, Corollary \ref{CY} and Corollary \ref{P3}, and the
term "$-\frac{1}{2}$" appearing in the coefficient of $\frac{d}{2s}$
of the second bound of Proposition \ref{refined}, they are certainly
not the best possible. One might hope to do better with a closer
examination of the numerical functions arising in the proofs. We
decided not to push here this investigation further. The same remark
applies to the constant term $s^4$ appearing in the bound
(\ref{bound0}) of previous Proposition \ref{nSintegra}.

With reference to Notations \ref{secondnot}, recall that $s=s_1$ denotes the degree
of the component $D_1$ of $S$ of minimal degree, and $\pi_1$ its sectional genus.

\begin{proposition}\label{nSnonintegra} Let $X\subseteq \mathbb P^r$
be an integral projective variety of dimension $\dim X=3$. Let
$$
S=r_1D_1 + r_2D_2 +\dots + r_lD_l
$$
be an effective Cartier divisor of $X$ as in Notations
\ref{secondnot}. Let $C\subset S$ be a curve of $S$ of degree $d>0$
and arithmetic genus $p_a(C)$. We have
\begin{equation}\label{bound1}
p_a(C)\leq \frac{d^2}{2s}+\frac{d}{2s}(2\pi_1-2-s)+4(\deg S)^6.
\end{equation}
\end{proposition}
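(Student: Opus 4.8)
The plan is to argue by induction on the number of components of $S$ counted with multiplicity, i.e.\ on $N=r_1+\dots+r_l$. The base case $N=1$ is precisely Proposition \ref{nSintegra} applied to the integral surface $S=D_1$, the constant $s^4$ being harmless since $s^4\le 4(\deg S)^6$. Before starting the induction I would dispose of small degrees: if $d\le\sqrt{8}\,(\deg S)^3$, then combining Lemma \ref{planegenus} with Lemma \ref{trecarte} (taking $\varphi=4(\deg S)^6$ and using $\pi_1\ge 0$, $s\ge 2$) gives (\ref{bound1}) at once, while the case $s=1$ follows directly from Lemma \ref{planegenus}. So I may assume $s\ge 2$ and $d$ large compared with $\deg S$.

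For the inductive step, fix an integral component $D=D_j$ of \emph{maximal} degree, so that the residual divisor $S'=S-D$ still contains a component of minimal degree $s$ and the same sectional genus $\pi_1$. Writing $f$ for a local equation of $D$ and recalling $D\sim aH$ with $a=s_j/n$, the residuation sequence
\[
0\to \mathcal{O}_{C'}(-D)\to \mathcal{O}_C\to \mathcal{O}_{C\cap D}\to 0,\qquad C'=\mathrm{Res}_D C\subseteq S',
\]
together with additivity of Euler characteristics, yields the exact identity
\[
p_a(C)=p_a(C\cap D)+p_a(C')+a\,\deg C'-1 .
\]
Here $C\cap D$ is a curve on the integral surface $D$, to which Proposition \ref{nSintegra} applies, while $C'$ is a curve on $S'$, to which the inductive hypothesis applies, giving a bound in terms of $s$ and $\pi_1$ precisely because $S'$ retains a minimal component. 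Setting $d_1=\deg(C\cap D)$ and $d_2=\deg C'$, so $d=d_1+d_2$, I would substitute both estimates into the identity and compare with the target (\ref{bound1}).

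The comparison is then numerical. The quadratic parts already leave room: since $s\le s_j$ one has $\frac{d^2}{2s}\ge \frac{d_1^2}{2s_j}+\frac{d_2^2}{2s}+\frac{d_1d_2}{s}$, a slack of at least $\frac{d_1d_2}{s}$. The linear coefficients, rewritten through (\ref{Sab2}) as $\tfrac12\big(\tfrac{\deg D}{n}+\tfrac{2\pi_X-2-n}{n}-1\big)$, cancel against the target up to the nonnegative term $\frac{d_1}{2}\cdot\frac{s_j-s}{n}$; and the constant improves by the buffer $4(\deg S)^6-4(\deg S')^6-s_j^4+1\ge 4s_j^6-s_j^4\ge 0$, using $(\deg S)^6-(\deg S')^6\ge s_j^6$. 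Everything is controlled \emph{provided} $d_1\ge as$.

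The crux, and the step I expect to be the main obstacle, is the leftover linear term $a\,\deg C'=\frac{\deg D}{n}\,d_2$, which is not absorbed by the slack $\frac{d_1d_2}{s}$ when the part $d_1$ of $C$ lying on $D$ is small; in that regime the Proposition \ref{nSintegra} estimate for $C\cap D$ is too lossy, since it discards the strongly negative contribution of the points of $C'\cap D$ to $\chi(\mathcal{O}_{C\cap D})$. To repair this I would, away from thickenings, replace the crude residuation by the decomposition $C=A\cup B$ into the union of the components of $C$ lying on $D$ and those not lying on $D$, for which $p_a(C)=p_a(A)+p_a(B)+\mathrm{length}(A\cap B)-1$ with the sharper estimate $\mathrm{length}(A\cap B)\le \min\big(\tfrac{\deg S'}{n}\,d_1,\ \tfrac{\deg D}{n}\,d_2\big)$; taking the first factor when $d_1$ is small bounds this term by a polynomial in $\deg S$, hence absorbs it into the buffer. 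For curves supported on a single component (thickenings, where the union splitting is vacuous) one must keep the plain residuation, and there the possibly unbounded term $\frac{\deg D}{n}\,d_2$ is tamed by the observation that a curve inside $r_1D_1$ with footprint $d_1$ on $D_1$ satisfies $d\le r_1d_1$, so the dangerous case $d_1<as$ forces $d\lesssim a\,\deg S\le(\deg S)^2$ and is already covered by the small-degree reduction.
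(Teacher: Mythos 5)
Your overall skeleton (induction, colon-ideal residuation sequence, numerical comparison, small-degree reduction via Lemma \ref{planegenus} and Lemma \ref{trecarte} with $\varphi=4(\deg S)^6$) is the same as the paper's, and you correctly isolate the crux: when the footprint $d_1=\deg (C\cap D)$ on the chosen component is small, the twist term $a\deg C'$ cannot be absorbed by the slack $\frac{d_1d_2}{s}$. But your repair of that case has a genuine gap. The decomposition $C=A\cup B$ with $A\subseteq D$ and $B\subseteq S'$ need not exist for non-reduced curves: a curve supported on $D\cap S'$ can have scheme structure lying in \emph{neither} $D$ nor $S'$. Concretely, if locally $D=\{x=0\}$, $S'=\{y=0\}$, $S=\{xy=0\}$, the complete intersection $\{xy=x+y=0\}$ is a degree-two Cohen--Macaulay curve inside $S$, supported on the line $\{x=y=0\}$, contained in neither component, and admitting no splitting. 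Glue such a ``diagonal'' piece to a curve of arbitrarily large degree lying on $S'$: the resulting $C$ has large $d$ (so it is not disposed of by your small-degree reduction) and small $d_1$, i.e.\ it sits exactly in the regime where you invoke the $A\cup B$ splitting; there Proposition \ref{nSintegra} cannot be applied to $A$ (it is not contained in $D$) nor the inductive hypothesis to $B$ (not contained in $S'$), however you assign the diagonal piece. Two secondary unproved points: your estimate $\mathrm{length}(A\cap B)\leq\min\bigl(\frac{\deg S'}{n}d_1,\frac{\deg D}{n}d_2\bigr)$ requires the curve being cut to have no embedded points on the cutting divisor (upper bounds for intersection lengths are not automatic, though one can first reduce to Cohen--Macaulay curves), and the case $\dim (C\cap D)=0$, which you exclude by asserting $C\cap D$ is a curve, needs a (short) separate argument.

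The paper fills precisely this gap with a different device, which is the key idea missing from your proposal. It forms residuals in \emph{both} directions, $\mathcal I_{C_D}=\mathcal I_C:\mathcal I_D$ and $\mathcal I_{C_E}=\mathcal I_C:\mathcal I_E$ (these exist for arbitrary subschemes of $S$, with no appeal to a splitting into components), and proves a Claim: the scheme defined by $\mathcal I_{C_D}\cap\mathcal I_{C_E}$ is a curve contained in $C$, whence $\deg C_D+\deg C_E\leq d+\deg D\cdot\deg E$. Consequently, unless $d<9\deg D\cdot\deg E$ (a small-degree case), at least one of the two residuals has degree $\leq\frac{5}{9}d$, and one runs the exact sequence whose \emph{residual} is the small one, so the dangerous twist term carries the factor $\leq\frac{5}{9}d$ while the complementary intersection has degree $\geq\frac{4}{9}d$ and supplies the absorbing slack. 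Your observation that $d\leq r_1d_1$ for pure thickenings is correct and handles $l=1$ (the paper instead uses, for $l=1$, the inclusion $C_1\subseteq C\cap D\subseteq C\cap E$, which gives the balance for free), but it does not extend to the mixed cases $l\geq 2$, which is where your argument breaks down.
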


\smallskip
\begin{proposition}\label{refined} Let $X\subseteq \mathbb P^r$
be an integral projective variety of dimension $\dim X=3$. Let
$$
S=r_1D_1 + r_2D_2 +\dots + r_lD_l
$$
be an effective Cartier divisor of $X$ as in Notations
\ref{secondnot}. Let $C\subset S$ be a curve of $S$ of degree $d$
and arithmetic genus $p_a(C)$. Assume $d>30(\deg S)^7$.

\smallskip
1) If there exists a component $D_i$ of $S$ of minimal degree $s$
such that $C\cap D_i$ is a curve and $C$ and $C\cap D_i$ have the
same degree, then
$$
p_a(C) \leq G^*(S,d) \leq
\frac{d^2}{2s}+\frac{d}{2s}(2\pi_1-2-s)+s^4.
$$

\smallskip
2) Otherwise, we have
$$
p_a(C) \leq
\frac{d^2}{2s}+\frac{d}{2s}\left(2\pi_1-2-s-\frac{1}{2}\right).
$$
\end{proposition}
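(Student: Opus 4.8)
The plan is to treat the two cases of the statement separately: case 1) comes at once from the integral--surface bound of Proposition \ref{nSintegra}, while the whole difficulty, and the extra gain $-\tfrac12$ in the linear coefficient, lies in case 2), which I would obtain by decomposing $C$ along the components of $S$ and reading off a negative cross term.

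For 1), if $D_i$ is a component of degree $s$ with $C\cap D_i$ a curve of degree $d$, then $C\cap D_i$ is a closed subscheme of $C$ with the same one--dimensional part, so in the exact sequence $0\to\mathcal{I}_{C\cap D_i,\,C}\to\mathcal{O}_C\to\mathcal{O}_{C\cap D_i}\to 0$ the kernel is supported in dimension zero; hence $\chi(\mathcal{O}_C)\geq\chi(\mathcal{O}_{C\cap D_i})$ and $p_a(C)\leq p_a(C\cap D_i)$. Since $D_i$ is integral of degree $s$ and, by (\ref{Sab2}), of sectional genus $\pi_1$, Proposition \ref{nSintegra} gives $p_a(C\cap D_i)\leq\frac{d^2}{2s}+\frac{d}{2s}(2\pi_1-2-s)+s^4$, while $p_a(C\cap D_i)\leq G^*(S,d)$ by the definition of $\mathcal{C}^*(S,d)$. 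This yields 1).

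For 2), write $\beta=\frac{1}{2s}(2\pi_1-2-s)$, so the asserted bound reads $p_a(C)\leq\frac{d^2}{2s}+\beta d-\frac{d}{4s}$. Let $D_1,\dots,D_p$ be the components of minimal degree $s$, put $S_A=r_1D_1+\dots+r_pD_p$ and $S_B=r_{p+1}D_{p+1}+\dots+r_lD_l$, and let $a$ be the degree of the one--dimensional part of $C$ lying over $D_1\cup\dots\cup D_p$, with $b=d-a$. If $a=0$, the curve part of $C$ lies on components of degree $s_B>s$, so Proposition \ref{nSnonintegra} gives $p_a(C)\leq\frac{d^2}{2s_B}+\beta_B d+4(\deg S)^6$, and here the gain is already in the leading term, since $\frac{d^2}{2s_B}-\frac{d^2}{2s}=-\frac{d^2(s_B-s)}{2ss_B}$ dominates everything once $d>30(\deg S)^7$. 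Otherwise I choose closed subcurves $C_1\subseteq S_A$, $C_2\subseteq S_B$ of $C$, of degrees $a\geq1$, $b\geq1$, whose union recovers the one--dimensional part of $C$ (when $b=0$, i.e.\ $C$ lives on the minimal components but not on a single one, I split instead into two subcurves on distinct minimal components, so that $s_B=s$ below). Then, by Mayer--Vietoris with $C_1\cap C_2$ zero--dimensional, $p_a(C)\leq p_a(C_1\cup C_2)=p_a(C_1)+p_a(C_2)+\ell(C_1\cap C_2)-1$. Estimating $p_a(C_2)$ by Proposition \ref{nSnonintegra}, $p_a(C_1)$ by Proposition \ref{nSnonintegra} or, for the smaller piece, by Lemma \ref{planegenus} to avoid a second additive constant, and using (\ref{Sab2}) to compare linear coefficients (which gives $\beta_B-\beta=\frac{s_B-s}{2n}\geq0$), the crux is the elementary identity
\[
\frac{a^2}{2s}+\frac{b^2}{2s_B}-\frac{d^2}{2s}=-\frac{ab}{s}-\frac{b^2(s_B-s)}{2ss_B},
\]
so that the whole matter reduces to showing that the negative cross term $-\frac{ab}{s}$ absorbs the leftover quantities. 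The intersection length is small: since $S_B\sim\alpha'H$ with $\alpha'=\frac1n\sum_{j>p}r_js_j\leq\frac{\deg S}{n}$ and $C_1\cdot H=a$, one has $\ell(C_1\cap C_2)\leq\alpha'a$, and symmetrically in $b$, whence $\ell(C_1\cap C_2)\leq\frac{\deg S}{n}\min(a,b)$; because $d>30(\deg S)^7$ forces $\max(a,b)$ to exceed $\frac{2s\deg S}{n}$, this is at most $\frac{ab}{2s}$, so $-\frac{ab}{s}+\ell(C_1\cap C_2)\leq-\frac{ab}{2s}\leq-\frac{d-1}{2s}$. The residual term $\frac{(s_B-s)b}{2n}-\frac{b^2(s_B-s)}{2ss_B}$ is negative once $b\geq\frac{ss_B}{n}$ and is $O((\deg S)^3)$ otherwise, hence harmless, and the additive constants from the genus estimates are of size $O((\deg S)^6)$; the hypothesis $d>30(\deg S)^7$ is exactly calibrated so that all of these are dominated by a fraction of $\frac{d}{4s}$, giving the claimed bound.

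The main obstacle is twofold. First, one must make the decomposition $C=C_1\cup C_2$ precise at the scheme level, choosing $C_1,C_2$ so that their union recovers the full one--dimensional part of $C$ (guaranteeing $p_a(C)\leq p_a(C_1\cup C_2)$) while each $C_i$ genuinely sits inside a divisor $S_A$, $S_B$ of the type required by Notations \ref{secondnot}; this amounts to controlling the embedded and non--reduced structure of $C$ along $S_A\cap S_B$. Second, one must verify that the single negative term $-\frac{ab}{s}$—or, when $a=0$, the smaller leading coefficient $\frac{1}{2s_B}<\frac{1}{2s}$—dominates the intersection length together with all constant errors \emph{uniformly} over the possible splits, including the delicate regime where one piece has very small degree; this uniform domination is precisely where the numerical hypothesis $d>30(\deg S)^7$ and the careful choice between Proposition \ref{nSnonintegra} and Lemma \ref{planegenus} for the small piece are spent, and it is the step most sensitive to the exact constants.
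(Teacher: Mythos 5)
Part 1) of your proposal is correct, and in fact more streamlined than the paper's treatment (which folds it into an induction): equality of degrees forces the kernel of $\mathcal O_C\to\mathcal O_{C\cap D_i}$ to have constant Hilbert polynomial, hence zero-dimensional support and nonnegative Euler characteristic, so $p_a(C)\leq p_a(C\cap D_i)\leq G^*(S,d)$, and $G^*(S,d)$ is bounded by Proposition \ref{nSintegra} together with (\ref{Sab2}). The genuine gap is in part 2), and it is not the technical refinement you describe but a failure of the method: the decomposition you need does not exist precisely in the cases that carry the difficulty. You require closed subcurves $C_1\subseteq S_A$, $C_2\subseteq S_B$ of $C$ with $C_1\cap C_2$ finite and $\deg(C_1\cup C_2)=d$. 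Take $S=2D_1$ (so $l=1$, $r_1=2$) and $C=S\cap F$ a complete intersection with a hypersurface $F$ of large degree, so that $d>30(\deg S)^7$: then $C\cap D_1=D_1\cap F$ is a curve of degree $d/2<d$, so you are in case 2) with $a=d$, $b=0$, and your fallback of splitting into two subcurves on distinct minimal components is vacuous, there being only one component; moreover every subcurve of $C$ contained in $D_1$ lies inside $C\cap D_1$, so no union of subcurves lying on components can have degree larger than $d/2$. A similar failure occurs even for reduced $S$: a curve with a component of multiplicity $N\geq 2$ along $D_1\cap D_2$ and contained in neither $D_i$ (locally $\mathcal I_C=(xy,(ax+by)^N)$ with $ab\neq 0$, where $x,y$ are local equations of $D_1,D_2$) has $C\cap D_1$ and $C\cap D_2$ meeting along the whole curve $D_1\cap D_2$, so Mayer--Vietoris with finite intersection is unavailable, and their union again has strictly smaller degree than $C$.

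This is exactly why the paper never decomposes $C$ as a union. It forms residual schemes by ideal quotients, $\mathcal I_{C_E}=\mathcal I_C:\mathcal I_E$ with $E=(r_1-1)D_1$ or $E=r_lD_l$, and exploits the twisted exact sequence $0\to\mathcal O_{C_E}(-E)\to\mathcal O_C\to\mathcal O_{C\cap E}\to 0$ of (\ref{bnHP}) and (\ref{nsuccexact}), which yields the exact identity $p_a(C)=p_a(C_E)+p_a(C\cap E)+\alpha\deg (C_E)-1$ (where $E\sim\alpha H$) no matter how $C$ is thickened inside the multiplicities; the proof is then a double induction on $l$ and $r_1$, in which case 2) for $C$ is often deduced from case 1) or case 2) for the smaller curve $C\cap E$. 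Your numerical skeleton --- the identity for $\frac{a^2}{2s}+\frac{b^2}{2s_B}-\frac{d^2}{2s}$, the comparison $\beta_B-\beta=\frac{s_B-s}{2n}$ from (\ref{Sab2}), the absorption of the $O((\deg S)^6)$ constants under $d>30(\deg S)^7$ --- is sound and closely parallels the paper's estimates, with the twist term $\alpha\deg (C_E)$ playing the role you assign to the intersection length (note also that your bound $\ell(C_1\cap C_2)\leq\alpha'\min(a,b)$ requires the pieces to have no embedded points along $S_A\cap S_B$). But until the union decomposition is replaced by the ideal-quotient mechanism, or some equivalent device handling non-reduced structure transverse to the components, case 2) remains unproved.
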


\smallskip
\begin{corollary}\label{intcompl} Let $X\subseteq \mathbb P^r$
be an integral projective variety of dimension $\dim X=3$. Let
$$
S=r_1D_1 + r_2D_2 +\dots + r_lD_l
$$
be an effective Cartier divisor of $X$ as in Notations
\ref{secondnot}. Assume there exists a curve of $S$ of degree $d$
with $d>30(\deg S)^7$, and arithmetic genus strictly greater  than
$\frac{d^2}{2s}+\frac{d}{2s}\left(2\pi_1-2-s-\frac{1}{2}\right)$.
Then
$$
G(S,d)=G^*(S,d).
$$
Moreover, if $C$ is a curve of $S$ of degree $d$ with $d>30(\deg
S)^7$, we have:
$$
p_a(C)=G(S,d)\quad \implies \quad C\in \mathcal C^*(S,d).
$$
\end{corollary}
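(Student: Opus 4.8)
The plan is to read off both assertions directly from Proposition \ref{refined}, using the genus hypothesis to exclude its second alternative. Abbreviate by
$$
B=\frac{d^2}{2s}+\frac{d}{2s}\left(2\pi_1-2-s-\frac{1}{2}\right)
$$
the right-hand side of case 2) of Proposition \ref{refined}. By assumption there is a curve $C_0\in\mathcal C(S,d)$ with $p_a(C_0)>B$, and since $d>30(\deg S)^7$ the proposition applies to $C_0$. The strict inequality $p_a(C_0)>B$ rules out case 2), so $C_0$ falls under case 1); in particular $G^*(S,d)\geq p_a(C_0)>B$. First I would use this to prove $G(S,d)=G^*(S,d)$. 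The inclusion $\mathcal C^*(S,d)\subseteq\mathcal C(S,d)$ gives $G^*(S,d)\leq G(S,d)$ for free, so only the reverse inequality is at issue. For an arbitrary $C\in\mathcal C(S,d)$ I distinguish two cases: if $p_a(C)\leq B$ then $p_a(C)\leq B<G^*(S,d)$; and if $p_a(C)>B$ then, as for $C_0$, Proposition \ref{refined} forces $C$ into case 1), whence $p_a(C)\leq G^*(S,d)$. Either way $p_a(C)\leq G^*(S,d)$, and taking the maximum over $\mathcal C(S,d)$ yields $G(S,d)\leq G^*(S,d)$, hence equality.

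For the second assertion I would take any $C\in\mathcal C(S,d)$ with $p_a(C)=G(S,d)$. By what was just proved $p_a(C)=G^*(S,d)>B$, so Proposition \ref{refined} again places $C$ in case 1): there is a component $D_i$ of minimal degree $s$ such that $C\cap D_i$ is a curve with $\deg(C\cap D_i)=\deg C=d$. Since $C\cap D_i$ is a degree-$d$ curve lying in the minimal-degree integral component $D_i$, it belongs to $\mathcal C^*(S,d)$, so $p_a(C\cap D_i)\leq G^*(S,d)=p_a(C)$. The closed immersion $C\cap D_i\hookrightarrow C$ fits into a short exact sequence
$$
0\to\mathcal K\to\mathcal O_C\to\mathcal O_{C\cap D_i}\to 0,
$$
where $\mathcal K$ is the ideal of $C\cap D_i$ inside $C$. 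Because $C$ and $C\cap D_i$ share the same degree, comparing Hilbert polynomials shows that $\mathcal K$ is supported on finitely many points, so $\chi(\mathcal K)=\mathrm{length}(\mathcal K)\geq 0$ and consequently $p_a(C)=p_a(C\cap D_i)-\mathrm{length}(\mathcal K)$.

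The concluding step, which I expect to be the only delicate point, is to upgrade this numerical equality to genuine containment. Combining $p_a(C)=p_a(C\cap D_i)-\mathrm{length}(\mathcal K)$ with the inequality $p_a(C\cap D_i)\leq p_a(C)$ forces $\mathrm{length}(\mathcal K)=0$, i.e. $\mathcal K=0$ and $C=C\cap D_i\subseteq D_i$; thus $C\in\mathcal C^*(S,d)$. The subtlety is that case 1) of Proposition \ref{refined} by itself only guarantees that $C$ meets a minimal-degree component in the full degree $d$, not that $C$ is contained in it---these can differ by an embedded or isolated zero-dimensional part. It is precisely the maximality $p_a(C)=G(S,d)=G^*(S,d)$ that annihilates this discrepancy $\mathcal K$ and turns ``equal degree'' into ``$C\subseteq D_i$''.
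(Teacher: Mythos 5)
Your proof is correct and follows essentially the same route as the paper: Proposition \ref{refined} supplies the minimal-degree component $D_i$ with $\deg(C\cap D_i)=d$, and maximality of $p_a(C)$ forces the kernel of $\mathcal O_C\to\mathcal O_{C\cap D_i}$ to vanish, giving $C=C\cap D_i\subseteq D_i$. The only cosmetic difference is that the paper identifies this kernel as $\mathcal O_{C_D}(-D)$, where $\mathcal I_{C_D}=\mathcal I_C\,:\,\mathcal I_{D_1}$ as in the proof of Proposition \ref{nSnonintegra}, whereas you treat it abstractly via Hilbert polynomial additivity; both yield $p_a(C)=p_a(C\cap D_i)-\mathrm{length}$ and the same conclusion.
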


\smallskip
\begin{remark}
By Remark \ref{Halphen}, $(i)$, if $d$ is a multiple of $s$, and
$d>30(\deg S)^7$, previous Corollary \ref{intcompl} applies.
Therefore, we have  $G(S,d)=G^*(S,d)$, and a curve of maximal genus
is necessarily contained in a component of $S$ of minimal degree.
\end{remark}

\smallskip
\begin{corollary}\label{CY} Let $X\subseteq \mathbb P^r$
be a smooth and subcanonical projective variety of dimension $\dim
X=3$. Let
$$
S=r_1D_1 + r_2D_2 +\dots + r_lD_l
$$
be an effective Cartier divisor of $X$ as in Notations
\ref{secondnot}. Assume that the integral components of $S$ of
minimal degree $s$ are smooth. Let $C\subset S$ be a curve of $S$ of
degree $d>30(\deg S)^7$ and arithmetic genus $p_a(C)$. We have
$$
p_a(C)\leq \frac{d^2}{2s}+\frac{d}{2s}(2\pi_1-2-s)+1,
$$
and the bound is sharp.
\end{corollary}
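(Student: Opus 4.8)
The plan is to read the bound off Proposition \ref{refined}, whose numerical hypothesis $d>30(\deg S)^7$ is exactly the one assumed here, and then to upgrade its constant term by invoking the subcanonical geometry recorded in Remark \ref{Halphen}. Proposition \ref{refined} splits into two cases according to whether or not $C$ is, up to degree, concentrated on a single minimal-degree component $D_i$, and I would treat these separately, checking in each that the target inequality $p_a(C)\leq \frac{d^2}{2s}+\frac{d}{2s}(2\pi_1-2-s)+1$ holds.

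First, case 2) of Proposition \ref{refined} is immediate: there one already has $p_a(C)\leq \frac{d^2}{2s}+\frac{d}{2s}\left(2\pi_1-2-s-\frac{1}{2}\right)$, and since the extra contribution $-\frac{d}{4s}$ is negative while $1>0$, this is a fortiori bounded by $\frac{d^2}{2s}+\frac{d}{2s}(2\pi_1-2-s)+1$. So all the real work is in case 1), where Proposition \ref{refined} only yields the weaker constant $s^4$ through the quantity $G^*(S,d)$, and I would sharpen this. By definition $G^*(S,d)$ is the maximal arithmetic genus of a curve of degree $d$ lying on some integral minimal-degree component $D_i$, and by hypothesis every such $D_i$ is smooth. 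Since $X$ is smooth and subcanonical, say $K_X\equiv qH$ with $q\in\mathbb Q$, adjunction gives $K_{D_i}=(K_X+D_i)|_{D_i}\equiv (q+a_i)H_{D_i}$, using $D_i\sim a_iH$ from (\ref{le}); hence each minimal-degree $D_i$ is itself smooth and subcanonical. Therefore Remark \ref{Halphen}, $(ii)$, applies to any curve of degree $d$ on $D_i$ and gives $p_a\leq \frac{d^2}{2s}+\frac{d}{2s}(2\pi_i-2-s)+1$; as $s_i=s$ forces $\pi_i=\pi_1$ by the relation recorded after (\ref{Sab2}) in Notations \ref{secondnot}, taking the maximum over all such components yields $G^*(S,d)\leq \frac{d^2}{2s}+\frac{d}{2s}(2\pi_1-2-s)+1$, which bounds $p_a(C)$ as desired.

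For sharpness I would exhibit equality via a complete intersection on a minimal-degree component: by Remark \ref{Halphen}, $(i)$, the curve $C=D_1\cap F$ with $\deg F=m+1$ has degree $d=(m+1)s$ and $p_a(C)=\frac{d^2}{2s}+\frac{d}{2s}(2\pi_1-2-s)+1$, and choosing $m$ large enough the constraint $d>30(\deg S)^7$ is met, so the bound is attained.

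I expect the only delicate point to be the bookkeeping that justifies replacing the $s^4$ from $G^*(S,d)$ by $1$: namely, verifying that the Hodge-Index argument of Remark \ref{Halphen}, $(ii)$, genuinely applies to \emph{every} (possibly reducible or non-reduced) curve of degree $d$ on the smooth subcanonical surface $D_i$, and that $\pi_i=\pi_1$ across all minimal-degree components. Both are routine consequences of adjunction and of the setup in Notations \ref{secondnot}, so once case 1) is reorganized around Remark \ref{Halphen}, $(ii)$, the statement follows.
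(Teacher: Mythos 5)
Your proposal is correct and follows essentially the same route as the paper, which states that Corollary \ref{CY} is an immediate consequence of Proposition \ref{refined} together with Remark \ref{Halphen}, $(i)$ and $(ii)$: case 2) is absorbed trivially, case 1) is handled by bounding $G^*(S,d)$ via the Hodge Index bound on the smooth subcanonical minimal-degree components (your adjunction computation $K_{D_i}\equiv(q+a_i)H_{D_i}$ being the reason Remark \ref{Halphen}, $(ii)$, applies), and sharpness comes from the complete intersections of Remark \ref{Halphen}, $(i)$.
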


For the number $\epsilon_1$ and the bound  appearing in the
following Corollary \ref{P3} we refer to Remark \ref{Halphen},
$(iv)$. It is known that on certain surfaces the  bound is sharp. When $X=\mathbb P^3$,
Corollary \ref{P3} already appears in \cite[Proposition 4.3]{Liu}.
Our proof is somewhat different, since we need Bridgeland Theory
only to apply \cite[Lemma 4.1]{Liu}.

\smallskip
\begin{corollary}\label{P3} Assume $X\subset \mathbb P^4$. Let
$$
S=r_1D_1 + r_2D_2 +\dots + r_lD_l
$$
be an effective Cartier divisor of $X$ as in Notations
\ref{secondnot}. If $\deg X>1$ assume that $D_1$ is a hyperplane
section of $X$. Let $C\subset S$ be a curve of $S$ of degree
$d>30(\deg S)^7$ and arithmetic genus $p_a(C)$. We have
$$
p_a(C)\leq \frac{d^2}{2s}+\frac{d}{2}(s-4)+1-\epsilon_1,
$$
and the bound is sharp.
\end{corollary}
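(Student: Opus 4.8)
The plan is to derive Corollary \ref{P3} from Proposition \ref{refined}, replacing in its first alternative the crude constant $s^4$ by the sharp $\mathbb P^3$-constant $1-\epsilon_1$ supplied by Remark \ref{Halphen}, $(iv)$. The first step is the geometric normalization forced by the hypotheses. Since $\dim X=3$ and $X\subseteq\mathbb P^4$, the variety $X$ is a hypersurface of degree $n=\deg X$: if $n=1$ then $X=\mathbb P^3$, while if $n>1$ the assumption that $D_1$ is a hyperplane section gives $a_1=1$ in the notation $D_i\sim a_iH$ of Notations \ref{secondnot}, whence $s=s_1=a_1n=n$. In either case $s=n$ and every component $D_i$ of $S$ of minimal degree $s$ satisfies $a_in=s=n$, so $a_i=1$ and $D_i\sim H$. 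I would then use that, for a hypersurface $X$ of degree $n\geq 2$ in $\mathbb P^4$, the sequence $0\to\mathcal O_{\mathbb P^4}(1-n)\to\mathcal O_{\mathbb P^4}(1)\to\mathcal O_X(1)\to 0$ and the vanishing $H^0(\mathcal O_{\mathbb P^4}(1-n))=H^1(\mathcal O_{\mathbb P^4}(1-n))=0$ give $h^0(X,\mathcal O_X(1))=5=h^0(\mathbb P^4,\mathcal O(1))$; hence every effective divisor in $|H|$ is cut out by a hyperplane of $\mathbb P^4$. Consequently each minimal degree component $D_i$ is an integral surface contained in a linear $\mathbb P^3$ (the same being trivially true when $n=1$, where $X=\mathbb P^3$).

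Next I would pin down the sectional genus. A general one-dimensional linear section of $X\subseteq\mathbb P^4$ is a plane curve of degree $n$, so $\pi_X=\binom{n-1}{2}$; substituting $s_i=n$ (that is $a_i=1$) into (\ref{Sab2}) yields $\pi_i=\pi_X=\binom{n-1}{2}=\binom{s-1}{2}$ for every minimal degree component, in particular $\pi_1=\binom{s-1}{2}$. As recorded in Remark \ref{Halphen}, $(iii)$, this turns the main term of Proposition \ref{refined} into the Halphen shape
\[
\frac{d}{2s}(2\pi_1-2-s)=\frac{d}{2}(s-4).
\]

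With these reductions I would apply Proposition \ref{refined}, legitimate since $d>30(\deg S)^7$, and treat its two alternatives. In case 2) the proposition gives $p_a(C)\le \frac{d^2}{2s}+\frac{d}{2s}(2\pi_1-2-s-\frac{1}{2})=\frac{d^2}{2s}+\frac{d}{2}(s-4)-\frac{d}{4s}$; since $\epsilon_1<s^2$ while $\frac{d}{4s}>\frac{30s^7}{4s}=\frac{15}{2}s^6$, one has $\epsilon_1-1\le \frac{d}{4s}$, so this value lies well below the asserted bound. In case 1) the proposition gives $p_a(C)\le G^*(S,d)$, so it suffices to bound $p_a(C')$ for an arbitrary degree-$d$ curve $C'$ contained in a minimal degree integral component $D_i$. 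By the first paragraph $D_i$ is an integral surface of degree $s$ and sectional genus $\pi_i=\binom{s-1}{2}$ lying in a $\mathbb P^3$, so Remark \ref{Halphen}, $(iv)$ (through \cite[Lemma 4.1]{Liu}) applies and gives
\[
p_a(C')\le \frac{d^2}{2s}+\frac{d}{2s}(2\pi_i-2-s)+1-\epsilon_1=\frac{d^2}{2s}+\frac{d}{2}(s-4)+1-\epsilon_1.
\]
Taking the maximum over all such $C'$ bounds $G^*(S,d)$, and the two cases together yield the stated inequality. For sharpness I would exhibit the equality inside $\mathbb P^3$: taking $X=\mathbb P^3$ and $S=D_1$ a surface of degree $s$ carrying a Halphen-extremal curve of degree $d$, the extremal case of the Halphen-Noether bound (\cite{GP}, \cite{Harris1980}) is attained, and $d>30(\deg S)^7>s^2-s$ places us in the range where that bound is sharp.

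The main obstacle is the geometric bookkeeping of the first two steps. One must invoke the hypothesis "$D_1$ is a hyperplane section" precisely to force $a_1=1$, so that $s=n$ and $\pi_1=\binom{s-1}{2}$, producing the Halphen linear term $\frac{d}{2}(s-4)$ rather than the strictly smaller value permitted by (\ref{lessLiu}); and one must verify that \emph{every} minimal degree component genuinely lies in a $\mathbb P^3$, which is exactly what makes the $\mathbb P^3$-specific refinement of Remark \ref{Halphen}, $(iv)$ available in case 1). Once these two points are secured, the remainder is the elementary numerical comparison carried out above.
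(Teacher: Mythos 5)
Your proposal is correct and takes essentially the same route as the paper, which presents Corollary \ref{P3} as an immediate consequence of Proposition \ref{refined} combined with Remark \ref{Halphen}, $(iii)$--$(iv)$ (i.e.\ \cite[Lemma 4.1]{Liu} applied on an integral surface in $\mathbb P^3$); your two-case analysis of Proposition \ref{refined} and your verification that every minimal-degree component lies in a $\mathbb P^3$ with sectional genus $\binom{s-1}{2}$ are precisely the details the paper leaves implicit. One slip to fix in the write-up: when $\deg X=1$ your claim that $s=n$ is false (then $X=\mathbb P^3$ and $s=a_1$ can be arbitrary), but this is harmless because the facts you actually use --- that each minimal-degree component is an integral degree-$s$ surface inside a $\mathbb P^3$, hence of sectional genus $\binom{s-1}{2}$ by Remark \ref{Halphen}, $(iii)$ --- hold trivially in that case, as you partly acknowledge.
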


\smallskip
Now we are going to prove Proposition \ref{nSnonintegra} and
Proposition \ref{refined}. We start with Proposition
\ref{nSnonintegra}. We keep all the notations stated in Notations
\ref{secondnot}.

\smallskip
\begin{proof}[Proof of Proposition \ref{nSnonintegra}]
In view of Lemma \ref{planegenus} we may assume $s\geq 2$. We argue
by induction on $l$. Therefore, we first assume $l=1$. In this case
we have
$$
S=r_1D_1,\quad \deg S=r_1s.
$$
We examine this case  by induction on $r_1$. If $r_1=1$ our claim
follows from Proposition \ref{nSintegra}. Hence, we may assume
$r_1\geq 2$. Set:
$$
D=D_1,\quad E=(r_1-1)D.
$$
We have a natural exact sequence:
$$
0\to \frac{\mathcal I_C+\mathcal I_E}{\mathcal I_C}\to \mathcal
O_C\to \mathcal O_{C\cap E}\to 0.
$$
Let $C_1$ be the closed subscheme of $X$ whose ideal sheaf is
defined by
$$
\mathcal I_{C_1}=\mathcal I_{C}\, :\, \mathcal I_{E}.
$$
This means that {\it a local section $x$ of $\mathcal O_X$ belongs
to $\mathcal I_{C_1}$ if and only if $x\cdot f_1^{r_1-1}$ belongs to
$\mathcal I_{C}$}. Since  $f_1$ is a local generator of $\mathcal
I_D$,  $f_1^{r_1-1}$ is a local generator of $\mathcal I_E$, and $f_1^{r_1}$
is a local section of $\mathcal I_C$ because $C\subset S=r_1D$, we
have:
\begin{equation}\label{CDE}
\mathcal I_{C_1}\cdot\mathcal I_E={\mathcal I_{C}\cap\mathcal
I_E}\quad {\text{and}}\quad \mathcal I_{C}+\mathcal I_D\subseteq
{\mathcal I_{C_1}}.
\end{equation}
It follows:
$$
\mathcal O_{C_1}(-E)\cong \frac{\mathcal O_X(-E)}{\mathcal
I_{C_1}(-E)}\cong \frac{\mathcal I_E}{\mathcal I_{C_1}\cdot\mathcal
I_E}\cong \frac{\mathcal I_E}{\mathcal I_{C}\cap\mathcal I_E}\cong
\frac{\mathcal I_C+\mathcal I_E}{\mathcal I_C}.
$$
Therefore, we may rewrite the previous exact sequence:
\begin{equation}\label{bnHP}
0\to \mathcal O_{C_1}(-E)\to \mathcal O_C\to \mathcal O_{C\cap E}\to
0.
\end{equation}
Twisting with $\mathcal O_X(t)$, $t\in\mathbb Z$, we get the
following equality between Hilbert polynomials (recall that from (\ref{le}) we
are assuming that $E\sim a_1(r_1-1)H$):
\begin{equation}\label{nHP}
\chi \left(\mathcal O_{C}(t)\right)=\chi \left(\mathcal
O_{C_1}(t-a_1(r_1-1))\right)+\chi \left(\mathcal O_{C\cap E}(t)\right).
\end{equation}
Observe that $C\cap E$ is a curve, because
$C_{{\text{red}}}\subseteq C\cap E$.

If $\dim C_1=0$, from (\ref{nHP}) we deduce that $C$ and $C\cap E$
have the same degree, and that:
\begin{equation}\label{nnHP}
p_a(C)=p_a(C\cap E)-{\text{length}}(C_1)\leq p_a(C\cap E).
\end{equation}
In this case the bound (\ref{bound1}) for $p_a(C)$ follows by
induction applied to $C\cap E$.

If $\dim C_1=1$, denote by $d_1$ the degree of $C_1$, and by $d_2$
the degree of $C\cap E$. From (\ref{nHP}) we deduce
\begin{equation}\label{consHP}
d=d_1+d_2,\quad {\text{and}}\quad p_a(C)=p_a(C_1)+p_a(C\cap
E)+a_1(r_1-1)d_1-1.
\end{equation}
Moreover, since (compare with (\ref{CDE}))
$$
C_1\subseteq C\cap D\subseteq C\cap E,
$$
we also have $d_1\leq d_2$. Since $C_1\subset D$ and $C\cap E\subset
E$, by induction we get:
$$
p_a(C)=p_a(C_1)+p_a(C\cap E)+a_1(r_1-1)d_1-1
$$
$$
\leq\left[\frac{d_1^2}{2s}+\frac{d_1}{2s}(2\pi_1-2-s)+4s^6\right]+
\left[\frac{d_2^2}{2s}+\frac{d_2}{2s}(2\pi_1-2-s)+4(r_1-1)^6s^6\right]+a_1(r_1-1)d_1
$$
$$
=
\frac{d^2}{2s}+\frac{d}{2s}(2\pi_1-2-s)+a_1(r_1-1)d_1-\frac{1}{s}d_1d_2+4s^6+4(r_1-1)^6s^6
$$
$$
\leq
\frac{d^2}{2s}+\frac{d}{2s}(2\pi_1-2-s)+4r_1^6s^6+\left[a_1(r_1-1)d_1-\frac{1}{s}d_1d_2\right].
$$
Since $d=d_1+d_2$ and $d_1\leq d_2$, we have $d\leq 2d_2$. Hence, if
$d\geq 2a_1s(r_1-1)=2s^2(r_1-1)/n$, then
$$
a_1(r_1-1)d_1-\frac{1}{s}d_1d_2=d_1\left[a_1(r_1-1)-\frac{1}{s}d_2\right]\leq 0,
$$
and we get (\ref{bound1}). Otherwise, $$d< 2s^2(r_1-1)/n\leq
\sqrt{2\cdot 4(\deg S)^6}.$$ In this case (\ref{bound1}) follows
combining Lemma \ref{planegenus} with Lemma \ref{trecarte} (in
Lemma \ref{trecarte} put $\varphi= 4(\deg S)^6$).

\smallskip
This concludes the proof of the case $l=1$.

\smallskip
Now we are going to examine the case $l\geq 2$.

\smallskip
Set:
$$
D=r_1D_1+ r_2D_2+\dots+ r_{l-1}D_{l-1}, \quad E=r_lD_l.
$$
Define closed subschemes $C_D$ and $C_E$ of $X$ setting:
$$
\mathcal I_{C_D}=\mathcal I_C\,:\,\mathcal I_D \quad
{\text{and}}\quad \mathcal I_{C_E}=\mathcal I_C\,:\,\mathcal I_E.
$$
Similarly as in the case $l=1$, we have:
\begin{equation}\label{ideals}
\mathcal I_C+\mathcal I_D\subseteq \mathcal I_{C_E},\quad \mathcal
I_{C_E}\cdot\mathcal I_E={\mathcal I_{C}\cap\mathcal I_E},\quad
\mathcal I_C+\mathcal I_E\subseteq \mathcal I_{C_D},\quad \mathcal
I_{C_D}\cdot\mathcal I_D={\mathcal I_{C}\cap\mathcal I_D}.
\end{equation}
From which we obtain two exact sequences:
\begin{equation}\label{nsuccexact}
0\to \mathcal O_{C_E}(-E)\to \mathcal O_C\to \mathcal O_{C\cap E}\to
0\quad{\text{and}}\quad 0\to \mathcal O_{C_D}(-D)\to \mathcal O_C\to
\mathcal O_{C\cap D}\to 0.
\end{equation}

Now we are going to examine various cases.

\smallskip
If $\dim C_E=0$, from (\ref{nsuccexact}) we get $\dim C\cap E=1$ and
$\deg C \cap E=d$. From
 (\ref{nsuccexact}) and by induction we get:
$$
p_a(C)\leq p_a(C\cap E)\leq
\frac{d^2}{2s_l}+\frac{d}{2s_l}(2\pi_l-2-s_l)+4(\deg E)^6.
$$
If $s_1=s_l$ we have $\pi_1=\pi_l$ and we are done because $\deg
E\leq \deg S$. If $s_1<s_l$, using (\ref{stimapi}), we deduce:
$$
\frac{d^2}{2s_l}+\frac{d}{2s_l}(2\pi_l-2-s_l)+4(\deg E)^6
=\frac{d^2}{2s_1}+\frac{d}{2s_1}(2\pi_1-2-s_1)+4(\deg E)^6
$$
$$
+
d^2\frac{s_1-s_l}{2s_1s_l}+\frac{d}{2s_l}(2\pi_l-2-s_l)-\frac{d}{2s_1}(2\pi_1-2-s_1)
$$
$$
\leq \frac{d^2}{2s_1}+\frac{d}{2s_1}(2\pi_1-2-s_1)+4(\deg
S)^6+\left[d^2\frac{s_1-s_l}{2s_1s_l}+d(s_l+2)\right].
$$
If $d^2\frac{s_1-s_l}{2s_1s_l}+d(s_l+2)\leq 0$, we are done.
Otherwise we have
$$
d< \frac{2s_1s_l(s_l+2)}{s_l-s_1}\leq \sqrt{2\cdot 4(\deg S)^6}.
$$
And, combining Lemma \ref{planegenus} with Lemma \ref{trecarte}, we
get (\ref{bound1}). Here, in order to apply  Lemma \ref{trecarte},
we need the constant term in (\ref{bound1}) to be
approximately greater than $(\deg S)^6$.

\smallskip
If $\dim C_E=1$ and $\dim C\cap E=0$, then $\dim C\cap D=1$ and
$\deg C\cap D=d$. In fact, from (\ref{nsuccexact}) we have $\deg
C_E=d$. Since  $C_E\subseteq C\cap D$, it follows that  $C\cap D$ is
a curve of degree $d$. From (\ref{nsuccexact}) we deduce that $\dim
C_D=0$ and so, by induction,
$$
p_a(C)\leq p_a(C\cap D)\leq
\frac{d^2}{2s_1}+\frac{d}{2s_1}(2\pi_1-2-s_1)+4(\deg D)^6,
$$
and we are done because $\deg D\leq \deg S$.

\smallskip
If $\dim C_D=0$, then  $C\cap D$ is a curve of degree $d$. From
(\ref{nsuccexact}) and by induction we get
$$
p_a(C)\leq p_a(C\cap D)\leq
\frac{d^2}{2s_1}+\frac{d}{2s_1}(2\pi_1-2-s_1)+4(\deg D)^6,
$$
and we are done because $\deg D\leq \deg S$.

\smallskip
If $\dim C_D=1$ and $\dim C\cap D=0$, then $C\cap E$ is a curve of
degree $d$ because $C_D$ is a curve of degree $d$  contained in $C\cap E$.
Therefore, $\dim C_E=0$. We already examined this case
before.

\smallskip
Summing up, in view of previous analysis, we may assume that the
schemes  $C_E$, $C_D$, $C\cap E$ and $C\cap D$ all have dimension
$1$. We need the following claim.

\smallskip
{\bf {Claim}}. {\it Let $C^*$ be the closed subscheme of $X$
defined by the ideal $\mathcal I_{C^*}=\mathcal I_{C_E}\cap \mathcal
I_{C_D}$. Then $C^*$ is a curve contained in $C$, and
$$
\deg C_D+\deg C_E-\deg D\cdot\deg E\leq \deg C^*\leq d.
$$
}

\smallskip
{\it Proof of the Claim.} Since both $C_D$ and $C_E$ are contained
in $C^*$, we have $\dim C^*\geq 1$. On the other hand, by
(\ref{ideals}), we see that $C^*$ is contained in $C$. Therefore,
$C^*$ is a curve, and $\deg C^*\leq d$. In order to prove the
remaining inequality,  consider the homogeneous coordinate ring $R$
of  $X$, the homogeneous ideal $\mathbf a\subset R$ of ${C_E}$, the
homogeneous ideal $\mathbf b\subset R$ of $C_D$, and the Hilbert
polynomial $p_{C^*}(t)$ of $C^*$. For $t\gg 0$ we have:
$$
p_{C^*}(t)=\dim \frac{R_t}{(\mathbf a\cap\mathbf b)_t}=\dim
\frac{R_t}{\mathbf a_t\cap\mathbf b_t}
$$
$$
=\dim \frac{R_t}{\mathbf a_t}+\dim \frac{R_t}{\mathbf b_t}-\dim
\frac{R_t}{\mathbf a_t+\mathbf b_t}= \dim \frac{R_t}{\mathbf
a_t}+\dim \frac{R_t}{\mathbf b_t}-\dim \frac{R_t}{(\mathbf a+\mathbf
b)_t}.
$$
The ideal $\mathcal I_{C_E}+\mathcal I_{C_D}$ defines a subscheme
$C^{**}$ of $D\cap E$.
Recall  that $D$ and $E$ have no components in common (see Notations \ref{secondnot}), and that by (\ref{le}) we have
$$
D\sim (r_1a_1+\dots+r_{l-1}a_{l-1})H, \quad E\sim r_la_lH.
$$
It follows that
$D\cap E$ is a curve of degree $\deg (D\cap E)=(r_1a_1+\dots+r_{l-1}a_{l-1})r_la_ln$,
where $n$ is the degree of $X$. Since $\deg D\cdot\deg E=(r_1a_1+\dots+r_{l-1}a_{l-1})r_la_ln^2$, we get:
$$
\deg (D\cap E)\leq\deg D\cdot\deg E.
$$
Keeping this in mind, from the previous computation of the polynomial $p_{C^*}(t)$ we deduce
that, if $\dim C^{**}=0$, then  $\deg C^* = \deg C_D+\deg C_E$. And,
if $\dim C^{**}=1$, then  $$\deg C^*\geq \deg C_D+\deg C_E-\deg
D\cdot\deg E.$$

\smallskip
{\it This concludes the proof of the Claim.}

\smallskip
In view of the Claim we may assume that
$$
\min\{\deg C_D,\, \deg C_E\}\leq \frac{5}{9}d.
$$
In fact, otherwise, we have
$$
d< 9\deg D\cdot\deg E\leq \sqrt{2\cdot 4(\deg S)^6}.
$$
And, combining Lemma \ref{planegenus} with Lemma \ref{trecarte}, we
get (\ref{bound1}).

\smallskip
First we examine the case $\deg C_E\leq \frac{5}{9}d$.

Set $\deg C_E=d_1$ and $\deg C\cap E=d_2$. From (\ref{ideals}) we
see that $C_E\subset D$. By induction and (\ref{nsuccexact}) we get:
\begin{equation}\label{CE}
p_a(C)=p_a(C_E)+p_a(C\cap E)+r_la_l\deg C_E-1
\end{equation}
$$
\leq \left[\frac{d_1^2}{2s_1}+\frac{d_1}{2s_1}(2\pi_1-2-s_1)+4(\deg
D)^6\right]
$$
$$
+ \left[\frac{d_2^2}{2s_l}+\frac{d_2}{2s_l}(2\pi_l-2-s_l)+4(\deg
E)^6\right]+r_la_ld_1,
$$
with
$$
d=d_1+d_2,\quad d_1\leq \frac{5}{9}d.
$$

\smallskip
If $s_1=s_l$, then $\pi_1=\pi_l$. Hence, we have:
\begin{equation}\label{ce1}
p_a(C)\leq
\left[\frac{d^2}{2s_1}+\frac{d}{2s_1}(2\pi_1-2-s_1)+4(\deg
S)^6\right] +\left[r_la_ld_1-\frac{d_1d_2}{s_1}\right].
\end{equation}
Since $d_1\leq \frac{5}{9}d$, then $d_2\geq \frac{4}{9}d$. So,
if $d\geq \frac{9r_la_ls_1}{4}$, then
$r_la_ld_1-\frac{d_1d_2}{s_1}\leq 0$, and we are done. Otherwise
$$d\leq \frac{9r_la_ls_1}{4}=\frac{9r_ls_1^2}{4n}\leq \sqrt{2\cdot 4(\deg S)^6}.$$
Combining Lemma \ref{planegenus} with Lemma \ref{trecarte}, we get
(\ref{bound1}).

\smallskip
If $s_1<s_l$, set
$$
\psi=\psi(d_1)=\frac{d_1^2}{2s_1}+\frac{d_2^2}{2s_l}=\frac{d_1^2}{2s_1}+\frac{(d-d_1)^2}{2s_l}=
d_1^2\left[\frac{1}{2s_1}+\frac{1}{2s_l}\right]-\frac{dd_1}{s_l}+\frac{d^2}{2s_l}.
$$
The first derivative of $\psi$ only vanishes at
$d_1^*=\frac{s_1}{s_1+s_l}d$. Since $s_1<s_l$ we have
$d_1^*<\frac{5}{9}d$. By Lemma \ref{planegenus} and Lemma
\ref{trecarte} we may assume $d_1^*>1$. So we have:
$$
1<d_1^*<\frac{5}{9}d.
$$
In the range $\left[1,d_1^*\right]$ the function $\psi=\psi(d_1)$ is
decreasing, and in the range $\left[d_1^*,\frac{5}{9}d\right]$ is
increasing. Therefore, for every $d_1\in
\left[1,\frac{5}{9}d\right]$, we have
$$
\psi(d_1)\leq \max\left\{\psi(1),\,
\psi\left(\frac{5}{9}d\right)\right\}.
$$
Since $s_1<s_l$, an elementary computation proves that
$\psi\left(\frac{5}{9}d\right)\leq \frac{d^2}{3s_1}$.
Moreover, combining Lemma \ref{planegenus} with Lemma \ref{trecarte}, we
may assume that $ \left[\frac{1}{2s_1}+\frac{1}{2s_l}\right]-\frac{d}{s_l}\leq 0$.
So, we may assume $\psi(1)\leq \frac{d^2}{2s_l}$.
It follows that
$$
\psi(d_1)\leq \mu=\max\left\{\frac{d^2}{2s_l},\,
\frac{d^2}{3s_1}\right\}.
$$
We may write
$$
\mu=\frac{d^2}{2s_1}-yd^2
$$
with:
$$
{\text{either}}\quad y=\frac{s_l-s_1}{2s_1s_l}\quad{\text{or}}\quad y=\frac{1}{6s_1}.
$$
Taking into account (\ref{stimapi}), from (\ref{CE}) we get:
\begin{equation}\label{ce2}
p_a(C)\leq
\left[\frac{d_1^2}{2s_1}+\frac{d_1}{2s_1}(2\pi_1-2-s_1)+4(\deg
D)^6\right]
\end{equation}
$$
+ \left[\frac{d_2^2}{2s_l} +\frac{d_2}{2s_l}(2\pi_l-2-s_l)+4(\deg
E)^6\right]+r_la_ld_1
$$
$$
\leq \left[\frac{d^2}{2s_1}+\frac{d}{2s_1}(2\pi_1-2-s_1)+4(\deg
S)^6\right]
$$
$$
-yd^2+\frac{(d_1-d)}{2s_1}(2\pi_1-2-s_1) +ds_l+r_la_ld
$$
$$
\leq \left[\frac{d^2}{2s_1}+\frac{d}{2s_1}(2\pi_1-2-s_1)+4(\deg
S)^6\right] +\left[-yd^2+2d +ds_l+r_la_ld\right].
$$
If $\left[-yd^2+2d +ds_l+r_la_ld\right]\leq 0$,  we are done.
Otherwise
$$d<\frac{1}{y}(2+s_l+r_la_l)\leq \sqrt{2\cdot 4(\deg S)^6}.$$
Combining Lemma \ref{planegenus} with Lemma \ref{trecarte}, we get
(\ref{bound1}).

\smallskip
It remains to examine the last case, $\deg C_D\leq \frac{5}{9}d$.

Set $\deg C_D=d_1$ and $\deg C\cap D=d_2$. From (\ref{ideals}) we
see that $C_D\subset E$. By induction and (\ref{nsuccexact}) we get:
$$
p_a(C)=p_a(C_D)+p_a(C\cap D)+\alpha\deg C_D-1
$$
$$
\leq \left[\frac{d_1^2}{2s_l}+\frac{d_1}{2s_l}(2\pi_l-2-s_l)+4(\deg
E)^6\right]
$$
$$
+ \left[\frac{d_2^2}{2s_1}+\frac{d_2}{2s_1}(2\pi_1-2-s_1)+4(\deg
D)^6\right]+\alpha d_1,
$$
with
$$
d=d_1+d_2,\quad d_1\leq \frac{5}{9}d, \quad
\alpha=r_1a_1+\dots+r_{l-1}a_{l-1}.
$$

\smallskip
If $s_1=s_l$, then $\pi_1=\pi_l$. Hence, we have:
\begin{equation}\label{cd1}
p_a(C)\leq
\left[\frac{d^2}{2s_1}+\frac{d}{2s_1}(2\pi_1-2-s_1)+4(\deg
S)^6\right] +\left[\alpha d_1-\frac{d_1d_2}{s_1}\right].
\end{equation}
Since $d_1\leq \frac{5}{9}d$, we have $d_2\geq \frac{4}{9}d$. So,
if $d\geq \frac{9}{4}\alpha s_1$, then $\alpha
d_1-\frac{d_1d_2}{s_1}\leq 0$, and we are done. Otherwise, $d<
\frac{9}{4}\alpha s_1$. Combining Lemma \ref{planegenus} with Lemma
\ref{trecarte}, we get (\ref{bound1}).

\smallskip
If $s_1<s_l$, taking into account (\ref{stimapi}), and that
$$
\frac{d_1^2}{2s_l}+\frac{d_2^2}{2s_1}\leq
\frac{d_1^2}{2s_1}+\frac{d_2^2}{2s_1}=
\frac{d^2}{2s_1}-\frac{d_1d_2}{s_1},
$$
we have:
\begin{equation}\label{cd2}
p_a(C)\leq
\left[\frac{d_1^2}{2s_l}+\frac{d_1}{2s_l}(2\pi_l-2-s_l)+4(\deg
E)^6\right]
\end{equation}
$$+
\left[\frac{d_2^2}{2s_1}+\frac{d_2}{2s_1}(2\pi_1-2-s_1)+4(\deg
D)^6\right]+\alpha d_1
$$
$$
\leq
\left[\frac{d^2}{2s_1}+\frac{d_1+d_2}{2s_1}(2\pi_1-2-s_1)+4(\deg
S)^6\right]- \frac{d_1}{2s_1}(2\pi_1-2-s_1)
$$
$$
+\frac{d_1}{2s_l}(2\pi_l-2-s_l) -\frac{d_1d_2}{s_1}+\alpha d_1
$$
$$
\leq \left[\frac{d^2}{2s_1}+\frac{d}{2s_1}(2\pi_1-2-s_1)+4(\deg
S)^6\right] -\frac{d_1d_2}{s_1}+d_1s_l+2d_1 +\alpha d_1
$$
$$
\leq \left[\frac{d^2}{2s_1}+\frac{d}{2s_1}(2\pi_1-2-s_1)+4(\deg
S)^6\right]+d_1\left(-\frac{4}{9s_1}d+s_l+2+\alpha \right).
$$
If
$$
-\frac{4}{9s_1}d+s_l+2+\alpha\leq 0
$$
we are done. Otherwise, $d< \frac{9}{4}s_1(s_1+2+\alpha)$. Combining
Lemma \ref{planegenus} with Lemma \ref{trecarte}, we get
(\ref{bound1}).

\smallskip
This concludes the proof of Proposition \ref{nSnonintegra}.
\end{proof}

\smallskip
Next, we turn to the proof of Proposition \ref{refined}. We keep the
notations introduced in Notations \ref{secondnot}, and in the proof
of Proposition \ref{nSintegra}.

\smallskip
\begin{proof}[Proof of Proposition \ref{refined}.]
We argue by induction on $l$.

We first assume $l=1$. In this case we have
$$S=r_1D_1.$$
We examine this case by induction on $r_1$. If $r_1=1$ our claim
follows by Proposition \ref{nSintegra}. Hence, we may assume
$r_1\geq 2$. We keep the same notations we used in the proof of
Proposition \ref{nSnonintegra} in the case $l=1$ and $r_1\geq 2$.
Observe that, since $C_{red}\subset D\subset E=(r_1-1)D$, both $C\cap
D$ and $C\cap E$ are curves.

\smallskip
Assume $C$ and $C\cap E$ have the same degree, so $\dim C_1=0$. From
(\ref{nnHP}) we deduce $p_a(C)\leq p_a(C\cap E)$. If $C$ and $C\cap
D$ have the same degree, then also $C\cap E$ and $C\cap E\cap D
\,(=C\cap D)$ have the same degree, and the claim follows by
induction on $C\cap E$. If $C$ and $C\cap D$ do not have the same
degree, then also $C\cap E$ and $C\cap E\cap D$ do not have the same
degree, and again the claim follows by induction on $C\cap E$.

\smallskip
Assume now that $C$ and $C\cap E$ do not have the same degree (in
this case also $C$ and $C\cap D$ do not have the same degree). Then
$C_1$ is a curve, it is contained in $D$ (compare with (\ref{CDE})),
and by Proposition \ref{nSnonintegra} and (\ref{consHP}) we get:
$$
p_a(C)=p_a(C_1)+p_a(C\cap E)+a_1(r_1-1)d_1-1
$$
$$
\leq \left[\frac{d_1^2}{2s}+\frac{d_1}{2s}(2\pi_1-2-s)+4s^6\right]+
\left[\frac{d_2^2}{2s}+\frac{d_2}{2s}(2\pi_1-2-s)+4(r_1-1)^6s^6\right]+a_1(r_1-1)d_1
$$
$$
\leq
\frac{d^2}{2s}+\frac{d}{2s}(2\pi_1-2-s)+a_1(r_1-1)d_1-\frac{1}{s}d_1d_2+4r_1^6s^6
$$
$$
=\frac{d^2}{2s}+\frac{d}{2s}\left(2\pi_1-2-s-\frac{1}{2}\right)+\left[\frac{d}{4s}+a_1(r_1-1)d_1-\frac{1}{s}d_1d_2+4r_1^6s^6\right].
$$
In order to conclude the case $l=1$, we only have to prove that if
$d>30(\deg S)^7$ one has:
$$
\frac{d}{4s}+a_1(r_1-1)d_1-\frac{1}{s}d_1d_2+4r_1^6s^6\leq 0.
$$
To this purpose, recall that $d_2\geq \frac{d}{2}$. Therefore, for
$d>30(\deg S)^7$ we have:
$$
a_1(r_1-1)-\frac{1}{s}d_2<0.
$$
It follows that:
$$
a_1(r_1-1)d_1-\frac{1}{s}d_1d_2=d_1\left[a_1(r_1-1)-\frac{1}{s}d_2\right]\leq
a(r_1-1)-\frac{1}{s}d_2\leq a_1(r_1-1)-\frac{1}{2s}d.
$$
Hence, it suffices to prove that for $d>30(\deg S)^7$ one has:
$$
a_1(r_1-1)-\frac{1}{2s}d+4r_1^6s^6\leq -\frac{1}{4s}d,
$$
i.e.
$$
a_1(r_1-1)+4r_1^6s^6\leq \frac{1}{4s}d,
$$
which holds true for $d>30(\deg S)^7$.

\smallskip
This concludes the proof of Proposition \ref{refined} in the case
$l=1$.

\smallskip
Now, arguing by induction, we are going to examine  the case $l\geq
2$.

\smallskip
As in the proof of Proposition \ref{nSnonintegra}, set
$$
D=r_1D_1+r_2D_2+\dots+r_{l-1}D_{l-1}, \quad E=r_lD_l.
$$

\smallskip
First assume that $s_1=s_l$, i.e. all the components $D_i$ of $S$
have the same degree.

\smallskip
Assume there is a component $D_i$ of $S$ such that $C\cap D_i$ is a
curve of the same degree of $C$. We may assume $i=l$, i.e. that
$D_i=D_l$. Since $C\cap D_l\subseteq C\cap E\subseteq C$, it follows
that $C$ and $C\cap E$ are curves of the same degree. Using
(\ref{nsuccexact}) (compare with (\ref{bnHP}), (\ref{nHP}) and (\ref{nnHP}))
we get $p_a(C)\leq p_a(C\cap E)$. Then the claim
follows by induction applied to the curve $C\cap E$, which is
contained in $E$ and has the same degree of $C\cap E\cap D_l=C\cap
D_l$.

\smallskip
Now assume there is no such a component $D_i$. If $\dim C\cap E=0$,
then $C\cap D$ is a curve with the same degree of $C$, and using
(\ref{nsuccexact}) we have $p_a(C)\leq p_a(C\cap D)$. The claim
follows by induction on $C\cap D$ which is a curve contained in $D$.
If $C\cap E$ is a curve with the same degree of $C$, then
$p_a(C)\leq p_a(C\cap E)$, and the claim follows by induction on
$C\cap E$, which is a curve contained in $E$. Therefore, we may
assume that both $C\cap E$ and $C_E$ are curves. It follows that
also $C\cap D$ is a curve because $C_E$ is contained in $C\cap D$ by
(\ref{ideals}). If $C$ and $C\cap D$ have the same degree, then
$p_a(C)\leq p_a(C\cap D)$ and the claim follows by induction on
$C\cap D$. Therefore, we may assume that $C_E$, $C\cap E$, $C_D$,
$C\cap D$ are curves. In this case, as in the proof of Proposition
\ref{nSnonintegra}, we get (compare with (\ref{ce1}) and
(\ref{cd1})):
$$
p_a(C)\leq
\left[\frac{d^2}{2s_1}+\frac{d}{2s_1}(2\pi_1-2-s_1)+4(\deg
S)^6\right] +\left[\beta d_1-\frac{d_1d_2}{s_1}\right],
$$
where $\beta=r_la_l$ if $\deg C_E\leq \frac{5}{9}d$, and
$\beta=r_1a_1+\dots+r_{l-1}a_{l-1}$ if $\deg C_D\leq \frac{5}{9}d$.
Recall we also have $d=d_1+d_2$ and $d_1\leq \frac{5}{9}d$. We may
write:
$$
\left[\frac{d^2}{2s_1}+\frac{d}{2s_1}(2\pi_1-2-s_1)+4(\deg
S)^6\right] +\left[\beta d_1-\frac{d_1d_2}{s_1}\right]
$$
$$
=\left[\frac{d^2}{2s_1}+\frac{d}{2s_1}\left(2\pi_1-2-s_1-\frac{1}{2}\right)\right]
+\left[\frac{1}{4s_1}d+4(\deg S)^6+\beta
d_1-\frac{d_1d_2}{s_1}\right].
$$
In order to conclude the case $l\geq 2$ with $s_1=s_l$, we only have
to prove that if $d>30(\deg S)^7$ one has:
$$
\frac{1}{4s_1}d+4(\deg S)^6+\beta d_1-\frac{d_1d_2}{s_1}\leq 0.
$$
To this purpose, observe that $d_2\geq \frac{4}{9}d$. Therefore, for
$d>30(\deg S)^7$ we have:
$$
\beta-\frac{1}{s_1}d_2<0.
$$
It follows that:
$$
\beta
d_1-\frac{1}{s_1}d_1d_2=d_1\left(\beta-\frac{1}{s_1}d_2\right)\leq
\beta-\frac{1}{s_1}d_2\leq \beta-\frac{4}{9s_1}d.
$$
Hence, it suffices to prove that for $d>30(\deg S)^7$ we have:
$$
\frac{1}{4s_1}d+4(\deg S)^6+\beta-\frac{4}{9s_1}d\leq 0,
$$
i.e. that we have
$$
\beta+4(\deg S)^6\leq \frac{7}{36s_1}d,
$$
which holds true for $d>30(\deg S)^7$.

\smallskip
It remains to analyze the case $s_1<s_l$.

\smallskip
First assume there is a component $D_i$ of $S$ of  degree $s_1$ such
that $C\cap D_i$ is a curve and $C$ and $C\cap D_i$ have the same
degree. Then $C$ and $C\cap D$ are curves with the same degree. From
(\ref{nsuccexact}) we get $p_a(C)\leq p_a(C\cap D)$, and the claim
follows by induction.

\smallskip
Assume there is no such a component. If $C\cap E$ is not a curve,
then $C\cap D$ is a curve with the same degree of $C$, and
$p_a(C)\leq p_a(C\cap D)$ (compare with (\ref{ideals}) and (\ref{nsuccexact})). The claim follows by induction on $C\cap
D$. If $C\cap E$ is a curve with the same degree of $C$, then
$p_a(C)\leq p_a(C\cap E)$. From Proposition \ref{nSnonintegra}, we
know that
$$
p_a(C\cap E)\leq
\frac{d^2}{2s_l}+\frac{d}{2s_l}(2\pi_l-2-s_l)+4r_l^6s_l^6.
$$
Taking into account (\ref{stimapi}), and that $s_1<s_l$ and
$d>30(\deg S)^7$, a direct elementary computation proves that:
$$
\frac{d^2}{2s_l}+\frac{d}{2s_l}(2\pi_l-2-s_l)+4r_l^6s_l^6\leq
\frac{d^2}{2s_1}+\frac{d}{2s_1}\left(2\pi_1-2-s_1-\frac{1}{2}\right),
$$
and we are done. Hence, we may assume that $C\cap E$ and $C_E$ are
curves. It follows that also $C\cap D$ is a curve. If $C$ and $C\cap
D$ have the same degree, then $p_a(C)\leq p_a(C\cap D)$, and we are
done by induction. Therefore, we may assume that $C_E$, $C\cap E$,
$C_D$, $C\cap D$ are curves. In this case, if $\deg C_E\leq
\frac{5}{9}d$, as in the proof of Proposition \ref{nSnonintegra}, by
(\ref{ce2}) we have:
$$
p_a(C)\leq
\left[\frac{d^2}{2s_1}+\frac{d}{2s_1}(2\pi_1-2-s_1)+4(\deg
S)^6\right] +\left[-yd^2+2d +ds_l+r_la_ld\right],
$$
where:
$$
{\text{either}}\quad y=\frac{s_l-s_1}{2s_1s_l}\quad {\text{or}}\quad
y=\frac{1}{6s_1}.
$$
We may write
$$
\left[\frac{d^2}{2s_1}+\frac{d}{2s_1}(2\pi_1-2-s_1)+4(\deg
S)^6\right] +\left[-yd^2+2d +ds_l+r_la_ld\right]
$$
$$
=\left[\frac{d^2}{2s_1}+\frac{d}{2s_1}\left(2\pi_1-2-s_1-\frac{1}{2}\right)\right]+\left[\frac{d}{4s_1}+4(\deg
S)^6-yd^2+2d +ds_l+r_la_ld\right].
$$
We are done because for $d>30(\deg S)^7$ we have
$$
\frac{d}{4s_1}+4(\deg S)^6-yd^2+2d +ds_l+r_la_ld\leq 0.
$$
If  $\deg C_D\leq \frac{5}{9}d$, as in the proof of Proposition
\ref{nSnonintegra}, by (\ref{cd2}) we have:
$$
p_a(C)\leq
\left[\frac{d^2}{2s_1}+\frac{d}{2s_1}(2\pi_1-2-s_1)+4(\deg
S)^6\right]+d_1\left(-\frac{4}{9s_1}d+s_l+2+\alpha \right),
$$
where $d_1=\deg C_D$ and $\alpha=r_1a_1+\dots+r_{l-1}a_{l-1}$. We
may write
$$
\left[\frac{d^2}{2s_1}+\frac{d}{2s_1}(2\pi_1-2-s_1)+4(\deg
S)^6\right]+d_1\left(-\frac{4}{9s_1}d+s_l+2+\alpha \right)
$$
$$
=\left[\frac{d^2}{2s_1}+\frac{d}{2s_1}\left(2\pi_1-2-s_1-\frac{1}{2}\right)\right]+\left[\frac{d}{4s_1}+4(\deg
S)^6+d_1\left(-\frac{4}{9s_1}d+s_l+2+\alpha \right)\right].
$$
We are done because for $d>30(\deg S)^7$ we have
$$
\frac{d}{4s_1}+4(\deg S)^6+d_1\left(-\frac{4}{9s_1}d+s_l+2+\alpha
\right)\leq 0.
$$

\smallskip
This concludes the proof of Proposition \ref{refined}.
\end{proof}

\smallskip
\begin{proof}[Proof of Corollary \ref{intcompl}]
In view of Proposition \ref{refined}, we only have to prove that if
$C$ is a curve of maximal genus, then $C$ is {\it contained} in a
component of $S$ of minimal degree. By Proposition \ref{refined} and
by hypothesis, there is a component $D_i$ of minimal degree of $S$
such that $C$ and $C\cap D_i$ have the same degree. We may assume
that $i=1$. Now set:
$$
D=D_1\quad {\text{and}}\quad E=(r_1-1)D_1+ \dots +r_lD_l.
$$
As in the proof of Proposition \ref{nSnonintegra}, there is a
subscheme $C_D\subset C\cap E$ and  an exact sequence
$$
0\to \mathcal O_{C_D}(-D)\to \mathcal O_C\to \mathcal O_{C\cap D}\to
0.
$$
Since $C$ and $C\cap D$ have the same degree, we have $\dim C_D=0$,
and
$$
p_a(C)\leq p_a(C\cap D)-{\text{length}}(C_D)\leq p_a(C\cap D).
$$
Since $C$ has maximal genus, then $p_a(C)=p_a(C\cap D)$. Therefore,
$C_D$ is empty, and $C=C\cap D$.
\end{proof}

\section{Bound for the genus of a curve on a threefold.}

Let $X\subset \mathbb P^r$ be a projective factorial variety $X$ of
dimension $3$, with at worst isolated singularities, and Picard
group generated by the hyperplane section class. In this section
we are going to prove Theorem \ref{n3fold} stated in the Introduction, i.e.
we establish a Castelnuovo type bound for the arithmetic genus $p_a(C)$
of a projective scheme $C$ of dimension $1$ and degree $d\gg0$,
contained in $X$. Combining Theorem
\ref{n3fold} with Proposition \ref{refined} and Corollary
\ref{intcompl} from the previous section, we obtain the subsequent
corollaries Corollary \ref{cor1}, Corollary \ref{cor2}, and
Corollary \ref{ncor3} (the latter already stated in the Introduction). In short, we can prove that curves of $X$ of
maximum genus with respect to the degree must necessarily lie on a
hyperplane section. Corollary \ref{ncor3} appears to us to be a
significant step forward for the Castelnuovo's bound conjecture stated
in \cite[Conjecture 1.1]{Liu}. In Remark \ref{finalr}, $(iii)$, we
note that, in the case of integral curves, one can easily prove
Theorem \ref{n3fold} by combining  the main result in \cite{CCD} with
\cite[Lemma]{D3}.

Mutatis mutandis, the proof of Theorem \ref{n3fold} consists in
showing that the iterative method introduced by Liu in his work
\cite[Proof of Theorem 4.9.]{Liu} also fits our estimate
(\ref{nbound3fold}). In general, the estimate (\ref{nbound3fold}) is
finer than Liu's \cite[Theorem 4.9. and Corollary 4.11.]{Liu}. And,
in a certain sense, it is fine (see (\ref{boundintcompl}) in the Introduction,
and compare with Remark \ref{Halphen},
$(i)$, and Remark \ref{finalr}, $(ii)$).

Unlike what we saw in the previous section, where our properties
were proved with so-called classical methods, this Liu's iterative
method requires the use of some results based on Bridgeland
Stability Theory. We thought it useful to gather these results into
a single statement, Corollary \ref{Halphen2}, which requires two
preliminary lemmas, Lemma \ref{lemma:bogomolov} and Lemma
\ref{lemma:actualwalls}. All these properties appear in \cite{Liu},
and come from Bridgeland Stability Theory. The iterative method is
also based on other results of Liu, which in our opinion are
classical in nature, which we will simply recall when necessary
during the proof of Theorem \ref{n3fold}.

\begin{notations} $(i)$ In the sequel we denote by $\nu$ and $M(n)$
the same numbers appearing in the claim of Theorem \ref{n3fold}.

\smallskip
$(ii)$ Let $X$ be as in Theorem \ref{n3fold}. Fix an integer $d>0$.
Let $\mathcal C(X,d)$ ($\mathcal C^*(X,d)$ resp.) be the set of all
curves $C$ of degree $d$ contained in $X$ (in some hyperplane
section of $X$ resp.). Assume that $\mathcal C^*(X,d)$ is no empty.
Set
$$
G(X,d)=\max\left\{p_a(C)\,:\, C\in \mathcal C(S,d)\right\}\quad
{\text{and}}\quad G^*(X,d)=\max\left\{p_a(C)\,:\, C\in \mathcal
C^*(S,d)\right\}.
$$
\end{notations}

\smallskip
\begin{corollary}\label{cor1}
Let $X\subseteq\mathbb P^r$ be a projective factorial  variety of
dimension $3$, degree $n$, linear arithmetic genus $\pi_X$, with at
worst isolated singularities. Assume that the Picard group of $X$ is
generated by the hyperplane section class. Let $C\subset X$ be a
curve of degree $d>M^*(n)=6n^2M(n)^2$ and arithmetic genus
$p_a(C)$.

\smallskip
1) If there exists a hyperplane section $H$ of $X$ such that $C\cap
H$ is a curve and $C$ and $C\cap H$ have the same degree, then
$$
p_a(C) \leq \frac{d^2}{2n}+\frac{d}{2n}\left(2\pi_X-2-n\right)+n^4.
$$
\smallskip
2) Otherwise we have
$$
p_a(C) \leq
\frac{d^2}{2n}+\frac{d}{2n}\left(2\pi_X-2-n\right)-\frac{1}{n}\sqrt{d}.
$$
\end{corollary}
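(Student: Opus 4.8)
The plan is to obtain the two cases by combining the reduction underlying the proof of Theorem \ref{n3fold} with the surface estimates of the previous section, the pivotal observation being that \emph{a hyperplane section is automatically integral}: since $X$ is factorial with $\mathrm{Pic}(X)=\mathbb{Z}\,[H]$, an effective divisor in $|H|$ written as a sum $\sum m_iV_i$ of prime divisors with $V_i\sim a_iH$, $a_i\geq 1$, satisfies $\sum m_ia_i=1$, forcing a single reduced component. Hence every hyperplane section $H$ is an integral surface of degree $n$ and sectional genus $\pi_X$, so that Proposition \ref{nSintegra} applies to it directly. More generally, the minimal-degree component $D_1$ of any divisor $S$ as in Notations \ref{secondnot} has degree $s=s_1=a_1n$, a positive multiple of $n$, and $s=n$ happens exactly when $D_1$ is itself a hyperplane section.

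I would settle case 1) \emph{directly}, without appealing to Theorem \ref{n3fold}. Assuming a hyperplane section $H$ with $C\cap H$ a curve of degree $d=\deg C$, I introduce the residual subscheme $C_H$ defined by $\mathcal I_{C_H}=\mathcal I_C:\mathcal I_H$, giving the exact sequence $0\to\mathcal O_{C_H}(-H)\to\mathcal O_C\to\mathcal O_{C\cap H}\to 0$ as in (\ref{nsuccexact}) with $E=H$. Since $\deg(C\cap H)=\deg C$, the residual $C_H$ is zero-dimensional, and comparing Hilbert polynomials as in (\ref{nnHP}) yields $p_a(C)\leq p_a(C\cap H)$. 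Because $H$ is integral of degree $n$ and sectional genus $\pi_X$, Proposition \ref{nSintegra} applied to $C\cap H\subset H$ gives exactly $p_a(C)\leq \frac{d^2}{2n}+\frac{d}{2n}(2\pi_X-2-n)+n^4$, which is case 1).

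For case 2) I would invoke the reduction that drives the proof of Theorem \ref{n3fold}: the iterative method places $C$, for $d>M(n)$, on a surface $S=r_1D_1+\cdots+r_lD_l$ of the form in Notations \ref{secondnot} with $\deg S$ bounded by a fixed function of $n$. As $M^*(n)=6n^2M(n)^2$ is chosen so large that $d>M^*(n)$ forces $d>30(\deg S)^7$, Proposition \ref{refined} applies to $C\subset S$. Recall from (\ref{Sab2}) that $\frac{1}{2s}(2\pi_1-2-s)=\frac{s}{2n}+\frac{2\pi_X-2-n}{2n}-\frac12$. When $s=n$ the minimal component $D_1$ is a hyperplane section, so being in case 2) of the Corollary (no hyperplane section absorbs the full degree of $C$) rules out case 1) of Proposition \ref{refined}; thus its case 2) bound holds and reads
$$p_a(C)\leq \frac{d^2}{2n}+\frac{d}{2n}(2\pi_X-2-n)-\frac{d}{4n},$$
which is $\leq \frac{d^2}{2n}+\frac{d}{2n}(2\pi_X-2-n)-\frac{1}{n}\sqrt{d}$ as soon as $d\geq 16$, hence certainly for $d>M^*(n)$.

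Finally, for the only remaining value $s\geq 2n$, I would bound $p_a(C)$ by the \emph{larger} of the two estimates in Proposition \ref{refined}, namely $\frac{d^2}{2s}+\frac{d}{2s}(2\pi_1-2-s)+s^4$; substituting the identity above and subtracting the target bound, the difference equals $\frac{s-n}{2n}\,d\bigl(1-\frac{d}{s}\bigr)+s^4+\frac{\sqrt{d}}{n}$, whose dominant term $-\frac{(s-n)d^2}{2ns}$ is negative of order $d^2$ while the remaining terms are of order at most $d$; so for $d>M^*(n)$ it is negative and case 2) holds \emph{a fortiori}. I expect the main obstacle to be the reduction step itself: extracting from the proof of Theorem \ref{n3fold} an explicit bound on $\deg S$ and checking it against the threshold $30(\deg S)^7$, so that Proposition \ref{refined} may legitimately be invoked. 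Once $C$ is placed on such a bounded-degree divisor, both cases follow from the surface results together with only the elementary numerical comparisons above, the integrality of hyperplane sections being what aligns the component dichotomy of Proposition \ref{refined} with the hyperplane-section dichotomy stated in the Corollary.
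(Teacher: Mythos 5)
Your case 1) argument is correct, and it is more direct than the paper's own route: the exact sequence $0\to\mathcal O_{C_H}(-H)\to\mathcal O_C\to\mathcal O_{C\cap H}\to 0$ does hold for an arbitrary curve $C$ and an effective Cartier divisor $H$ (this is \cite[Lemma 3.1]{Liu}; the identity $(\mathcal I_C:\mathcal I_H)\cdot\mathcal I_H=\mathcal I_C\cap\mathcal I_H$ needs only that $\mathcal I_H$ is locally principal, not that $C$ lie on a divisor as in the derivation of (\ref{nsuccexact})), your observation that factoriality together with $\mathrm{Pic}(X)=\mathbb Z[H]$ forces every hyperplane section to be integral of sectional genus $\pi_X$ (by (\ref{Sab2}) with $a_i=1$) is sound, and Proposition \ref{nSintegra} then gives the case 1) bound, in fact for every $d>0$. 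The paper obtains case 1) only as a by-product of its Type I/II/III trichotomy combined with Proposition \ref{refined}, so here your route is genuinely cleaner.

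The gap is in case 2). Your pivotal assertion, that ``the iterative method places $C$, for $d>M(n)$, on a surface $S=r_1D_1+\cdots+r_lD_l$ as in Notations \ref{secondnot} with $\deg S$ bounded by a fixed function of $n$'', is false, and it misdescribes what the proof of Theorem \ref{n3fold} does. That proof splits into three types according to the generic projection $C'_0\subset\mathbb P^3$, and only in Type II is $C$ contained in a surface $D_0\subset X$ of degree at most $\frac43 n(n+1)$; your argument covers exactly that case (correctly), but no other. A concrete class of curves escaping your reduction: take $C=X\cap F\cap F'$ with $F,F'$ general hypersurfaces of degree $a$, so $d=na^2$; the minimal degree of a surface of $X$ containing $C$ is $an=\sqrt{nd}$, which for $d>M^*(n)$ is far larger than the threshold $(d/30)^{1/7}$ required to invoke Proposition \ref{refined}, so that proposition never applies to such $C$, yet the corollary must cover it. These are precisely the curves of Type I, for which the paper needs the wall-crossing result, Corollary \ref{Halphen2}: it yields $p_a(C)\le\frac{d^2}{2(n+1)}+\frac29(2n-7)d+1$, and one wins because the leading coefficient $\frac{1}{2(n+1)}$ beats $\frac{1}{2n}$ by a term of order $d^2$, absorbing the $-\frac1n\sqrt d$. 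Similarly, Type III curves lie on no bounded-degree surface in general; the paper handles them by the residual decomposition $p_a(C_0)=p_a(C_1)+p_a(D_1\cap C_0)+k_1d_1-1$, bounding $p_a(C_1)$ by Theorem \ref{n3fold} when $d_1>M(n)$ and by Lemma \ref{planegenus} when $d_1\le M(n)$; it is exactly this step that dictates the threshold $M^*(n)=6n^2M(n)^2$, through inequalities of the shape $\sqrt{2d}\ge\sqrt d+nM(n)$. Without separate arguments for Types I and III, your proof establishes case 2) only for the proper subclass of curves lying on a low-degree divisor of $X$, so as written it does not prove the corollary.
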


\smallskip
\begin{corollary}\label{cor2}
Let $X\subseteq\mathbb P^r$ be a projective factorial  variety of
dimension $3$, degree $n$, linear arithmetic genus $\pi_X$, with at
worst isolated singularities. Assume that the Picard group of $X$ is
generated by the hyperplane section class. Assume there exists a
curve of $X$ of degree $d$ with $d>M^*(n)=6n^2M(n)^2$, and
arithmetic genus strictly greater  than
$\frac{d^2}{2n}+\frac{d}{2n}\left(2\pi_X-2-n\right)-\frac{1}{n}\sqrt{d}$.
Then
$$
G(X,d)=G^*(X,d).
$$
Moreover, if $C$ is a curve of $X$ of degree $d$ with
$d>M^*(n)=6n^2M(n)^2$, we have:
$$
p_a(C)=G(X,d)\quad \implies \quad C\in \mathcal C^*(X,d).
$$
\end{corollary}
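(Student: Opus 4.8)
The plan is to adapt the proof of Corollary \ref{intcompl} to the threefold setting, using its threefold analogue Corollary \ref{cor1} in place of Proposition \ref{refined}, and a hyperplane section $H$ of $X$ in place of a minimal-degree component of a surface $S$. As $\mathcal C^*(X,d)\subseteq \mathcal C(X,d)$ we have $G^*(X,d)\leq G(X,d)$ for free, so both assertions reduce to showing that every curve $C$ of degree $d>M^*(n)$ with $p_a(C)=G(X,d)$ is contained in a hyperplane section of $X$; indeed, once such a $C$ lies in some $H\in\mathcal C^*(X,d)$ we get $G(X,d)=p_a(C)\leq G^*(X,d)\leq G(X,d)$.

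First I would place such a $C$ in the favourable alternative of Corollary \ref{cor1}. By hypothesis some curve of degree $d$ has arithmetic genus strictly larger than $\frac{d^2}{2n}+\frac{d}{2n}(2\pi_X-2-n)-\frac{1}{n}\sqrt{d}$, so $G(X,d)$ exceeds this value; as $p_a(C)=G(X,d)$, the bound of part 2) of Corollary \ref{cor1} fails for $C$. This excludes the ``Otherwise'' alternative, so $C$ falls into part 1): there is a hyperplane section $H$ of $X$ with $C\cap H$ a curve of the same degree $d$ as $C$.

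Next I would reproduce the exact-sequence argument from the proof of Proposition \ref{nSnonintegra}, now taking $E=H$. Since $X$ is factorial with Picard group generated by the hyperplane class, $H$ is a Cartier divisor and $\mathcal I_H$ is locally principal; setting $\mathcal I_{C_H}=\mathcal I_C:\mathcal I_H$ one obtains the identification $\frac{\mathcal I_C+\mathcal I_H}{\mathcal I_C}\cong \mathcal O_{C_H}(-H)$ and hence the exact sequence
$$
0\to \mathcal O_{C_H}(-H)\to \mathcal O_C\to \mathcal O_{C\cap H}\to 0.
$$
Comparing Hilbert polynomials and using $\deg(C\cap H)=\deg C=d$ forces $\dim C_H=0$, whence
$$
p_a(C)=p_a(C\cap H)-\operatorname{length}(C_H)\leq p_a(C\cap H).
$$
As $C\cap H$ is a degree-$d$ curve inside a hyperplane section, it lies in $\mathcal C^*(X,d)$, so $p_a(C\cap H)\leq G^*(X,d)$. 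Chaining this with $G^*(X,d)\leq G(X,d)=p_a(C)$ forces equalities throughout: on one hand $G(X,d)=G^*(X,d)$, which is the first assertion; on the other hand $\operatorname{length}(C_H)=0$, so $C_H=\emptyset$ and $C=C\cap H\subseteq H$, i.e. $C\in\mathcal C^*(X,d)$, which is the second assertion.

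The argument is essentially formal once Corollary \ref{cor1} is in hand, so the only step requiring attention is the transfer of the exact sequence from the surface to the threefold setting: one must check that the construction $\mathcal I_{C_H}=\mathcal I_C:\mathcal I_H$ and the isomorphism $\frac{\mathcal I_C+\mathcal I_H}{\mathcal I_C}\cong\mathcal O_{C_H}(-H)$ go through verbatim for an arbitrary curve $C\subset X$ and the Cartier divisor $H$. This is precisely where factoriality of $X$ enters, guaranteeing that $\mathcal I_H$ is locally generated by a single element; the remaining degree bookkeeping that yields $\dim C_H=0$ is routine.
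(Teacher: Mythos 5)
Your proof is correct, and while it shares the paper's two-step skeleton (Corollary \ref{cor1} plus the colon-ideal exact sequence), the routing is genuinely different. The paper's proof is two lines: it invokes not the statement but the \emph{proof} of Corollary \ref{cor1} to conclude that a maximal-genus curve $C$ must fall into the ``Type II'' case there, hence lies inside a surface $S$ of $X$ of degree $\leq \frac{4}{3}n(n+1)$ whose minimal-degree integral component is a hyperplane section; it then applies Corollary \ref{intcompl} to that $S$ (which requires $S$ to be as in Notations \ref{secondnot}, $d>30(\deg S)^7$, and the genus hypothesis with $s=n$, $\pi_1=\pi_X$ --- all true, but needing bookkeeping). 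You instead use only the \emph{statement} of Corollary \ref{cor1}: failure of the bound in part 2) forces the alternative of part 1), producing a hyperplane section $H$ with $C\cap H$ a curve of degree $d$, and you then inline the exact-sequence argument (the same mechanism used in the proofs of Proposition \ref{nSnonintegra} and Corollary \ref{intcompl}) directly with $E=H$. This is sound: the identity $\mathcal I_{C_H}\cdot\mathcal I_H=\mathcal I_C\cap\mathcal I_H$ and the sequence $0\to\mathcal O_{C_H}(-H)\to\mathcal O_C\to\mathcal O_{C\cap H}\to 0$ need only that $\mathcal I_H$ is locally principal, not that $C$ lie in any auxiliary surface, so nothing is missing. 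Your route buys modularity --- no appeal to the internals of Corollary \ref{cor1}'s proof and no verification of Corollary \ref{intcompl}'s numerical hypotheses --- at the cost of re-deriving an argument the paper has already packaged; the paper's route buys brevity by citing established results. Two cosmetic slips, neither a gap: the Hilbert-polynomial comparison forces $\dim C_H\leq 0$ (possibly $C_H=\emptyset$), not exactly $\dim C_H=0$; and the Cartier property of $H$ comes for free from $X$ being an integral variety not contained in a hyperplane --- factoriality is what is needed upstream, in Theorem \ref{n3fold} and Corollary \ref{cor1}, rather than at this step.
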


Before starting with the proof of Theorem \ref{n3fold}, we recall
some results coming from Bridgeland Stability Theory, which we need
in order to apply the iterative method of Liu (see Lemma \ref{lemma:bogomolov}, Lemma \ref{lemma:actualwalls},
and Corollary \ref{Halphen2} below). These results appear in Liu's article \cite{Liu}, cited below.
We refer to  \cite{Liu}  for more details (e.g. for the notion of semistability, actual wall, the category
$\mathcal A^b$, etc.).
In the sequel we denote by $\mathcal I_{C/\mathbb{P}^3}$ the ideal sheaf of a curve $C\subset \mathbb{P}^3$.

\begin{lemma}\label{lemma:bogomolov}
Let $C\subset \mathbb{P}^3$ be a curve of degree $d$.
Let $b_0$ be a real number such that $b_0<0$.
If
$\mathcal I_{C/\mathbb{P}^3}$  is $\sigma_{a,b_0}$-semistable for every real number $a>0$, then
\begin{equation*}
p_a(C)\le \dfrac{2}{-3b_0}d^2 + \left( - \dfrac{b_0}{3} - 2 \right)d
+ 1\ .
\end{equation*}
\end{lemma}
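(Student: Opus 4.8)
The plan is to extract the numerical content of the semistability hypothesis via the generalized Bogomolov--Gieseker inequality and then solve for $p_a(C)$. Write $\mathcal I:=\mathcal I_{C/\mathbb P^3}$. First I would record the Chern character of $\mathcal I$. From the exact sequence $0\to\mathcal I\to\mathcal O_{\mathbb P^3}\to\mathcal O_C\to 0$ together with Riemann--Roch on $\mathbb P^3$ (using $\chi(\mathcal O_C)=1-p_a(C)$), one computes $\operatorname{ch}(\mathcal I)=(1,\,0,\,-d,\,p_a(C)-1+2d)$, the entries being the coefficients of $1,H,H^2,H^3$ for $H$ the hyperplane class. I would then pass to the $b_0$-twisted Chern character $\operatorname{ch}^{b_0}=e^{-b_0H}\operatorname{ch}$, obtaining $\operatorname{ch}^{b_0}_0=1$, $\operatorname{ch}^{b_0}_1=-b_0$, $\operatorname{ch}^{b_0}_2=-d+\tfrac{b_0^2}{2}$ and $\operatorname{ch}^{b_0}_3=p_a(C)-1+2d+b_0d-\tfrac{b_0^3}{6}$.

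The key input is the generalized Bogomolov--Gieseker inequality for $\mathbb P^3$ supplied by \cite{Liu}: for every $\sigma_{a,b_0}$-semistable object $E$ one has $Q_{a,b_0}(E):=a^2\Delta(E)+4\bigl(\operatorname{ch}^{b_0}_2(E)\bigr)^2-6\operatorname{ch}^{b_0}_1(E)\operatorname{ch}^{b_0}_3(E)\ge 0$, where $\Delta=(\operatorname{ch}^{b_0}_1)^2-2\operatorname{ch}^{b_0}_0\operatorname{ch}^{b_0}_2$. Applying this to $E=\mathcal I$, which is $\sigma_{a,b_0}$-semistable for all $a>0$ by hypothesis, I would first note the pleasant simplification $\Delta(\mathcal I)=b_0^2-2(-d+\tfrac{b_0^2}{2})=2d$, and then expand $Q_{a,b_0}(\mathcal I)$. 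The point is that the quartic-in-$b_0$ terms cancel, leaving the clean expression $Q_{a,b_0}(\mathcal I)=2a^2d+4d^2+2b_0^2d+12b_0d+6b_0\bigl(p_a(C)-1\bigr)$.

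Since $d>0$, the only $a$-dependence is the manifestly nonnegative term $2a^2d$, so the binding constraint lives at the boundary of the ray. Because $\mathcal I$ is semistable for every $a>0$ and $Q$ is continuous in $a$, letting $a\to 0^+$ yields $4d^2+2b_0^2d+12b_0d+6b_0\bigl(p_a(C)-1\bigr)\ge 0$. Finally I would solve for $p_a(C)$: dividing by $6b_0<0$ reverses the inequality and gives $p_a(C)-1\le -\tfrac{2}{3b_0}d^2-\tfrac{b_0}{3}d-2d$, which is exactly the asserted bound once one rewrites $-\tfrac{2}{3b_0}=\tfrac{2}{-3b_0}$ and collects the linear coefficient as $-\tfrac{b_0}{3}-2$.

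The main obstacle I anticipate is bookkeeping rather than conceptual: pinning down the precise normalizations of the twisted Chern character and of the quadratic form $Q_{a,b_0}$ so that every constant lands correctly, and rigorously justifying the limit $a\to 0^+$ (continuity of $Q_{a,b_0}$ in $a$, combined with semistability along the whole ray $a>0$). One should also confirm that $\mathcal I$ indeed lies in the relevant heart $\mathcal A^{b_0}$ for every $a>0$, but this is precisely what the standing semistability assumption guarantees, so no extra work is needed there.
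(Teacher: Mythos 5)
Your proposal is correct and takes essentially the same route as the paper, whose proof simply cites Liu's Lemma 4.2 and notes that it follows by applying Macr\`i's generalized Bogomolov--Gieseker inequality to $\mathcal I_{C/\mathbb{P}^3}$ at $(a,b)=(0,b_0)$ --- which is precisely your argument: the computation $Q_{a,b_0}(\mathcal I_{C/\mathbb{P}^3})=2a^2d+4d^2+2b_0^2d+12b_0d+6b_0\bigl(p_a(C)-1\bigr)\ge 0$ followed by the limit $a\to 0^+$ and division by $6b_0<0$. Your Chern character bookkeeping, the cancellation of the quartic terms in $b_0$, and the final algebra all check out, so the proposal is a correct fleshed-out version of the paper's one-line proof.
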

\begin{proof}
This is \cite[p.~15, Lemma 4.2]{Liu}. It follows applying  the
generalized Bogomolov-Gieseker inequality of Macr\`i in
$\mathbb{P}^3$ to $\mathcal I_{C/\mathbb{P}^3}$ with $(a,b) = (0, b_0)$
\cite{Macri'}.
\end{proof}

\begin{lemma}\label{lemma:actualwalls}
Let $C\subset \mathbb{P}^3$ be a Cohen-Macaulay curve of degree $d$.
Let $p$ be an integer such that
$$
0<\frac{4}{3}p\leq \sqrt{d}.
$$
Suppose that for every integer $k$ with $0<k\leq \frac{4}{3}p$, and for every surface
$S \in \vert \mathcal{O}_{\mathbb{P}^3}(k) \vert$, one has
either $\dim S\cap C=0$, or $\mathrm{dim}\ S\cap C = 1$
and
\begin{equation}\label{equation:smalldegree}
\deg(S\cap C) \leq k\sqrt{2d}-\dfrac{k^2}{2}.
\end{equation}
Then there is no actual wall for $\mathcal I_{C/\mathbb{P}^3} $ in the range
$a> 0$ and $ - \frac{4}{3}p \le b < 0$.
\end{lemma}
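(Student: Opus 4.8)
The plan is to deduce the statement from Liu's analysis of tilt stability on $\mathbb{P}^3$, the same framework that underlies Lemma \ref{lemma:bogomolov}. Recall that in the family of stability conditions $\sigma_{a,b}$ ($a>0$, $b\in\mathbb{R}$) the potential walls for a fixed object form nested semicircles in the upper half $(a,b)$-plane with centres on the $b$-axis, and an \emph{actual} wall is a point of such a semicircle at which $\mathcal I_{C/\mathbb{P}^3}$ acquires a genuine destabilising subobject $F\hookrightarrow \mathcal I_{C/\mathbb{P}^3}$ in the tilted heart $\mathcal A^b$ having the same $\sigma_{a,b}$-slope as $\mathcal I_{C/\mathbb{P}^3}$. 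Thus it suffices to show that, under the stated hypotheses, no such subobject can exist once $-\frac{4}{3}p\le b<0$.

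First I would recall, from \cite{Liu} (resting ultimately on \cite{Macri'}), the classification of the destabilising subobjects that can produce an actual wall for the ideal sheaf of a curve in this region: up to the standard twist, each such $F$ is governed by an effective surface $S$ of some degree $k$ meeting $C$, the kernel and cokernel of $F\hookrightarrow\mathcal I_{C/\mathbb{P}^3}$ being controlled by the intersection $S\cap C$. The Cohen--Macaulay hypothesis on $C$ is precisely what guarantees that $S\cap C$ is empty, zero-dimensional, or a genuine curve, so that $\deg(S\cap C)$ is a well-defined invariant and the only one entering the wall computation. The admissible degrees are exactly $0<k\le\tfrac{4}{3}p$, since the semicircular wall attached to a degree-$k$ destabiliser can reach the vertical line $b=-\tfrac{4}{3}p$ only when $k\le\tfrac{4}{3}p$.

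Next I would carry out the Chern-character bookkeeping. Writing $\mathrm{ch}(\mathcal I_{C/\mathbb{P}^3})=(1,0,-d,\ast)$ and expressing $\mathrm{ch}(F)$ through $k$ and $e:=\deg(S\cap C)$, the equation equating the $\sigma_{a,b}$-slopes of $F$ and $\mathcal I_{C/\mathbb{P}^3}$ pins down the semicircle on which the corresponding wall would lie, and its leftmost intersection with the $b$-axis becomes an explicit function of $d$, $k$ and $e$. A direct computation then shows that this semicircle remains inside the region $b<-\tfrac{4}{3}p$ exactly when $e\le k\sqrt{2d}-\tfrac{k^2}{2}$, which is hypothesis (\ref{equation:smalldegree}); the normalising constraint $\tfrac{4}{3}p\le\sqrt d$ ensures that the relevant range of parameters is non-degenerate. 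Since this holds for every admissible $k$, no wall enters $-\tfrac{4}{3}p\le b<0$, and the lemma follows.

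The main obstacle is the second step: matching the abstract destabilising objects supplied by Bridgeland theory with the concrete surfaces $S$ and their intersection degrees $\deg(S\cap C)$, and confirming that no destabiliser of a different numerical type contributes a wall in this range. This is exactly the structural input imported from \cite{Liu}, so in practice the argument reduces to quoting the corresponding wall-structure lemma there and verifying that our assumptions on $p$ and on $\deg(S\cap C)$ are precisely the conditions under which it applies.
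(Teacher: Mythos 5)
Your proposal takes essentially the same route as the paper's own proof: both reduce the statement to Liu's classification of actual walls (\cite[Proposition 3.2]{Liu}), according to which a destabilizing subobject of $\mathcal I_{C/\mathbb{P}^3}$ in this range is either a line bundle $\mathcal{O}_{\mathbb{P}^3}(-S)$ with $S\in\vert\mathcal O_{\mathbb P^3}(k)\vert$, $1\le k\le\frac43 p$ (forcing $C\subset S$), or a twisted ideal sheaf $\mathcal I_{C_1/\mathbb{P}^3}(-S)$ with $C_2\subset S\cap C$ subject to an explicit degree inequality, and then both approaches check that the hypotheses exclude the two cases. The only content you leave unexecuted is the short numerical verification the paper actually carries out: since $k\le\frac43 p\le\sqrt d$, one has $\min\left\{d-\frac{k^2}{2},\,d+\frac{k^2}{2}-k\sqrt{2d}\right\}=d+\frac{k^2}{2}-k\sqrt{2d}$, so an actual wall would force $\deg(S\cap C)\ge\deg C_2>k\sqrt{2d}-\frac{k^2}{2}$, contradicting (\ref{equation:smalldegree}), which also rules out the line-bundle case because it implies $\deg(S\cap C)<d$ and hence $C\not\subset S$.
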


\begin{remark}
Since $k\leq \sqrt{d}$, from assumption (\ref{equation:smalldegree})
it follows that $\deg(S\cap C)<d$. Therefore, $C$ cannot be
contained in $S$.
\end{remark}

\begin{proof}[Proof of Lemma \ref{lemma:actualwalls}]
By \cite[Proposition 3.2]{Liu}, an actual wall for
$\mathcal I_{C/\mathbb{P}^3} $ in the range $a> 0$ and $ - \frac{4}{3}p \le b
< 0$ is given by an exact sequence of coherent sheaves in $\mathcal
A^b$:
\begin{equation*}
0 \rightarrow A \rightarrow \mathcal I_{C/\mathbb{P}^3}\rightarrow B
\rightarrow 0,
\end{equation*}
where we have two possibilities for $A$. One of the possibilities is
that $A \cong \mathcal{O}_{\mathbb{P}^3}(-S)$ is a line bundle, for
a divisor $S \in \vert \mathcal{O}_{\mathbb{P}^3}(k) \vert$ with
$1\leq k\le \frac{4}{3}p$. Since $A$ is a subsheaf of
$\mathcal I_{C/\mathbb{P}^3}$, the curve $C$ would be contained in $S$. This
is in contrast with our hypotheses.

The other possibility is that there exist one dimensional subschemes
$C_1,\,C_2\subset C$ and a divisor $S \in \vert
\mathcal{O}_{\mathbb{P}^3}(k) \vert$ such that $1\leq k\leq
\frac{4}{3}p$, $A\cong \mathcal I_{C_1/\mathbb{P}^3}(-S)$, $C_2\subset S\cap
C$, $d = \deg C_1+\deg C_2$, and
\begin{equation*}
d - \deg C_2 < \mathrm{min}\left\{  d - \dfrac{k^2}{2},\, d +
\dfrac{k^2}{2} - k\sqrt{2d}  \right\}.
\end{equation*}
Since $k\leq \sqrt{d}$, it follows that
$$
\mathrm{min}\left\{  d - \dfrac{k^2}{2},\, d + \dfrac{k^2}{2} -
k\sqrt{2d}  \right\}=d + \dfrac{k^2}{2} - k\sqrt{2d}.
$$
Therefore, we would have
$$
\deg S\cap C\geq \deg C_2 >k\sqrt{2d}-\dfrac{k^2}{2}.
$$
This is in contrast with our hypotheses.
\end{proof}

\begin{corollary}[Halphen's bound for non integral curves]\label{Halphen2}
Let $C\subset \mathbb{P}^3$ be a Cohen-Macaulay curve of degree $d$.
Let $p$ be an integer such that
$$
0<\frac{4}{3}p\leq \sqrt{d}.
$$
Suppose that for every integer $k$ with $0<k\leq \frac{4}{3}p$, and for every surface
$S \in \vert \mathcal{O}_{\mathbb{P}^3}(k) \vert$, one has
either $\dim S\cap C=0$, or $\mathrm{dim}\ S\cap C = 1$
and
\begin{equation*}
\deg(S\cap C) \leq k\sqrt{2d}-\dfrac{k^2}{2}.
\end{equation*}
Then one has:
\begin{equation}\label{equation:halphen}
p_a(C) \le\dfrac{d^2}{2p}+ \dfrac{2}{9}(2p-9) d + 1\ .
\end{equation}
\end{corollary}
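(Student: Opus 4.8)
The plan is to deduce the bound by combining Lemma \ref{lemma:actualwalls} with Lemma \ref{lemma:bogomolov}, applied at $b_0:=-\frac{4}{3}p$. The first, essentially bookkeeping, observation is that the hypotheses of the Corollary are word-for-word those of Lemma \ref{lemma:actualwalls}: for every integer $k$ with $0<k\le\frac{4}{3}p$ and every $S\in|\mathcal O_{\mathbb P^3}(k)|$ one has either $\dim S\cap C=0$, or $\dim S\cap C=1$ with $\deg(S\cap C)\le k\sqrt{2d}-\frac{k^2}{2}$. Thus Lemma \ref{lemma:actualwalls} applies verbatim and yields that there is no actual wall for $\mathcal I_{C/\mathbb{P}^3}$ in the range $a>0$, $-\frac{4}{3}p\le b<0$.

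Next I would convert this absence of walls into the semistability hypothesis demanded by Lemma \ref{lemma:bogomolov}, namely that $\mathcal I_{C/\mathbb{P}^3}$ is $\sigma_{a,b_0}$-semistable for \emph{every} $a>0$. Since $p>0$ we have $b_0<0$, and $b_0=-\frac{4}{3}p$ is the (included) left boundary of the strip above, so the entire vertical ray $\{(a,b_0):a>0\}$ lies in the wall-free region. In the large volume limit $a\to+\infty$ the stability condition $\sigma_{a,b_0}$ recovers ordinary Gieseker stability, with respect to which the rank-one torsion-free sheaf $\mathcal I_{C/\mathbb{P}^3}$ is semistable; this is part of the Bridgeland machinery recalled in \cite{Liu}. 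As $a$ decreases from $+\infty$ to $0^+$ along $b=b_0$ no actual wall is met, so semistability is preserved, giving $\sigma_{a,b_0}$-semistability of $\mathcal I_{C/\mathbb{P}^3}$ for all $a>0$.

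It then only remains to invoke Lemma \ref{lemma:bogomolov} with this $b_0$ and substitute. From $-3b_0=4p$ we get $\frac{2}{-3b_0}=\frac{1}{2p}$, and $-\frac{b_0}{3}-2=\frac{4p}{9}-2=\frac{2}{9}(2p-9)$, so the bound of Lemma \ref{lemma:bogomolov} reads
\[
p_a(C)\le \frac{d^2}{2p}+\frac{2}{9}(2p-9)d+1,
\]
which is exactly (\ref{equation:halphen}).

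This last computation is purely arithmetic, and all the real content is carried by the two preparatory lemmas. Consequently the one step deserving care --- and the point I would single out as the main obstacle --- is the propagation argument of the second paragraph: upgrading ``no actual wall in the strip'' to genuine $\sigma_{a,b_0}$-semistability along the whole ray. This requires the precise definition of an actual wall and the large-volume-limit identification of Bridgeland-semistable objects with Gieseker-semistable sheaves, both developed in \cite{Liu}; in particular one must check that $\mathcal I_{C/\mathbb{P}^3}$ is semistable at the far end of the path and that crossing no wall truly preserves this property all the way down to $a>0$.
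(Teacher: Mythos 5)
Your proposal is correct and follows essentially the same route as the paper: apply Lemma \ref{lemma:actualwalls} to exclude actual walls in the strip $a>0$, $-\frac{4}{3}p\le b<0$, use the large-volume semistability of $\mathcal I_{C/\mathbb{P}^3}$ (the paper handles this by citing \cite[Lemma 2.3 and p.~10]{Liu}, exactly the fact you describe) together with the absence of walls to get $\sigma_{a,b_0}$-semistability for all $a>0$ at $b_0=-\frac{4}{3}p$, and then conclude by Lemma \ref{lemma:bogomolov} with the same substitution $\frac{2}{-3b_0}=\frac{1}{2p}$, $-\frac{b_0}{3}-2=\frac{2}{9}(2p-9)$. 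The step you flag as delicate is precisely the one the paper delegates to the cited results of \cite{Liu}, so there is no gap.
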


\begin{remark}
Roughly speaking, we can state the hypotheses of the Corollary
\ref{Halphen2} as follows: {\it suppose that every surface of small
degree compared to $d$ intersects $C$ either in a $0$-dimensional scheme,
or in a curve
with small degree compared to $d$}. When $C$ is integral, this is
equivalent to say that $C$ is not contained in a surface of degree
small compared to $d$. Compare with Theorem \ref{HGP}.
\end{remark}

\begin{proof}[Proof of Corollary \ref{Halphen2}]
By Lemma \ref{lemma:actualwalls} there is no actual wall for $
\mathcal I_{C/\mathbb{P}^3} $ for any $a > 0$ and $b$ in the range
$-\frac{4}{3}p \leq b < 0$. On the other hand, $
\mathcal I_{C/\mathbb{P}^3}\in\mathcal A^b$ is $\sigma_{a,b}$-semistable for
any $a \gg 0 $ and $ b < 0 $ \cite[Lemma 2.3 and p. 10, two lines
above Proposition 3.2]{Liu}. It follows that $\mathcal I_{C/\mathbb{P}^3} $
is $\sigma_{a,b_0}$-semistable for any $a > 0$ and $b_0 =
-\frac{4}{3} p $. Inequality \eqref{equation:halphen} now follows
applying Lemma \ref{lemma:bogomolov}.\end{proof}

\smallskip
We are in position to prove Theorem \ref{n3fold}.

\smallskip
\begin{proof}[Proof of Theorem \ref{n3fold}] As in \cite[Proof of Theorem 4.9]{Liu}, the
proof consists in a finite sequence of steps.

\smallskip
{\bf{Step 0}}.

\smallskip
By \cite[Lemma 4.3]{Liu2} we may assume that $C$ is Cohen-Macaulay.
Set
$$
C_0=C, \quad d_0=d.
$$
By \cite[Section 3 and Lemma 3.14]{Liu}, we know that a generic
projection $\mathbb P^r\dasharrow\mathbb P^3$ restricts to a regular
map $X\to \mathbb P^3$ in such a way the image $C'_0$ of $C_0$ is a
Cohen-Macaulay curve, has the same degree of $C_0$, and
$p_a(C_0)\leq p_a(C'_0)$.

\smallskip
There are only three possibilities for $C'_0$.

\smallskip
$\bullet$ Type I.

\smallskip
For every surface $S \in \vert \mathcal{O}_{\mathbb{P}^3}(k) \vert$
with $1\leq k\leq \frac{4}{3}(n+1)$, one has either $\dim S\cap
C'_0=0$, or $\mathrm{dim}\ S\cap C'_0 = 1$ and
$$
\deg(S\cap C'_0) \leq k\sqrt{2d}-\dfrac{k^2}{2}.
$$
In view of  Corollary \ref{Halphen2} we deduce (set $p=n+1$):
$$
p_a(C'_0)\leq \frac{d^2}{2(n+1)}+\frac{2}{9}(2n-7)d+1.
$$
This number is $\leq \frac{d^2}{2n}+\frac{d}{2n}(2\pi_X-2-n)+4\nu^6$
if
$$
d>M_1=M_1(n)=\frac{8}{9}n(n+1)^2=\frac{2}{3}\nu(n+1).
$$
In this case we are done, because $M(n)>M_1(n)$.

\smallskip
$\bullet$ Type II.

\smallskip
$C'_0$ is contained in a surface  $S_0\subset \mathbb P^3$ with
$\deg S_0\in [1,\,\frac{4}{3}(n+1)]$. Taking the cone over $S_0$, we get a hypersurface
in $\mathbb P^r$ intersecting $X$ in a
surface $D_0\subset X$ containing $C_0$, with $\deg D_0\in
[n,\,\frac{4}{3}n(n+1)]$. Let
$s_0$ be the minimal degree of an integral component of $D_0$.
Observe that $n\leq s_0$ because $X$ is factorial and the Picard
group of $X$ is generated by the hyperplane section. From
Proposition \ref{nSnonintegra} we know that
$$
p_a(C_0)\leq \frac{d^2}{2s_0}+\frac{d}{2s_0}(2\pi_0-2-s_0)+4(\deg
D_0)^6.
$$
Here $\pi_0$ denotes the sectional genus of such a component of
minimal degree. A direct computation proves that this number is
$\leq \frac{d^2}{2n}+\frac{d}{2n}(2\pi_X-2-n)+4\nu^6$ if
$$
d>M_2=M_2(n)=\nu.
$$
In this case we are done, because $M(n)>M_2(n)$.

\smallskip
$\bullet$ Type III.

\smallskip
$C'_0$ is not of Type I and is not of Type II.

Therefore, there exists a surface  $S_1\subset \mathbb P^3$ with
$\deg S_1=k_1\in [1,\,\frac{4}{3}(n+1)]$,  such that $S_1\cap C'_0$
is a curve and such that
$$
\deg S_1\cap C'_0>  k_1\sqrt{2d}-\dfrac{{k_1}^2}{2}.
$$
Observe that, since $C'_0$ is Cohen-Macaulay,  we have $\deg
C'_0>\deg S_1\cap C'_0$. In fact, otherwise,  $S_1\cap C'_0=C'_0$,
and $C'_0$ would be contained in $S_1$, i.e. $C'_0$ would be of Type
II. In view of \cite[Lemma 3.1]{Liu}, we know there exists a closed subscheme ${C'_0}^*\subseteq C'_0$ and an exact sequence
$$
0\to \mathcal O_{{C'_0}^*}(-S_1)\to \mathcal O_{{C'_0}}\to \mathcal O_{C'_0\cap S_1}\to 0.
$$
Since $\deg C'_0>\deg S_1\cap C'_0$, it follows that $\dim {C'_0}^*=1$. Therefore, by
\cite[Proposition 3.15 and proof]{Liu}, we deduce the existence of a  curve $C_1\subset C_0$, a
surface $D_1\subset X$  with $\deg D_1=k_1n\in
[n,\,\frac{4}{3}n(n+1)]$, such that
\begin{equation}\label{stimaC_0}
p_a(C_0)=p_a(C_1)+p_a(D_1\cap C_0)+k_1\deg C_1-1,
\end{equation}
$$
\deg (D_1\cap C_0) >  k_1\sqrt{2d}-\dfrac{{k_1}^2}{2},
$$
and
$$
\deg C_1+\deg(D_1\cap C_0)=d=\deg C_0.
$$
Set
$$
d_1=\deg C_1,\quad d'_0=d_0-d_1=d-d_1=\deg (D_1\cap C_0).
$$
Since we are interested to bound $p_a(C_0)$ from above, in the
sequel we may assume that $C_1$ is Cohen-Macaulay \cite[Lemma
4.3]{Liu2}. As for $C_0$, denote by $C'_1\subset \mathbb P^3$ a generic
projection of $C_1$. Set:
$$
M_3=4\nu^6.
$$
Notice that $M_3>M_1>M_2$.

\smallskip
Now, we are going to prove that in the following two subcases

\smallskip
1) $d_1>M_3$ and $C'_1$ is of Type I or II,

\smallskip
2) $d_1\leq M_3$,

the inequality (\ref{nbound3fold}) holds when $d>M(n)$.

\smallskip
To this aim, first we notice that, since
$d'_0=d-d_1>k_1\sqrt{2d}-\frac{k_1^2}{2}$, we have
\begin{equation}\label{primod}
d>\frac{9}{8}\nu^2\quad \implies \quad d'_0>M_2=\nu.
\end{equation}
Hence, as in Type II, we have
$$
p_a(C_0\cap D_1)\leq
\frac{{d'_0}^2}{2n}+\frac{d'_0}{2n}(2\pi_X-2-n)+4\nu^6.
$$
Set
$$
M_4=\frac{9}{8}\nu^2,
$$
and notice that $M_3>M_4$. From (\ref{stimaC_0}) we have:
$$
p_a(C_0)=p_a(C_1)+p_a(C_0\cap D_1)+k_1d_1-1
$$
$$
\leq
\frac{{d'_0}^2}{2n}+\frac{d'_0}{2n}(2\pi_X-2-n)+4\nu^6+[p_a(C_1)+k_1d_1]
$$
$$
=\frac{({d-d_1})^2}{2n}+\frac{d-d_1}{2n}(2\pi_X-2-n)+4\nu^6+[p_a(C_1)+k_1d_1]
$$
$$
=\frac{{d}^2}{2n}+\frac{d}{2n}(2\pi_X-2-n)+4\nu^6+\left[-\frac{dd_1}{n}+\frac{d_1^2}{2n}-\frac{d_1}{2n}(2\pi_X-2-n)+p_a(C_1)+k_1d_1\right].
$$
Set:
$$
R_1=-\frac{dd_1}{n}+\frac{d_1^2}{2n}-\frac{d_1}{2n}(2\pi_X-2-n)+p_a(C_1)+k_1d_1.
$$
In order to prove (\ref{nbound3fold}), it suffices to prove that
$R_1\leq 0$.

\smallskip
In the first case 1) we have $d_1>M_3$ and $C'_1$ is of  Type I or
II. As in Type I or II we have:
$$
p_a(C_1)\leq \frac{d_1^2}{2n}+\frac{d_1}{2n}(2\pi_X-2-n)+4\nu^6.
$$
Hence we have:
$$
R_1\leq -\frac{dd_1}{n}+\frac{d_1^2}{n}+4\nu^6+k_1d_1.
$$
In order to conclude the case 1), it suffices that
\begin{equation}\label{nstruttura}
-\frac{d_1(d-d_1)}{n}+4\nu^6+k_1d_1\leq 0.
\end{equation}
Since $d_1> M_3=4\nu^6$, dividing by $d_1$, it suffices that:
\begin{equation*}
-\frac{d-d_1}{n}+1+k_1\leq 0,
\end{equation*}
i.e.
$$
{d-d_1}\geq n({1}+k_1).
$$
As in (\ref{primod}) one sees that
$$
d>\frac{9}{8}\nu^2\quad \implies \quad d'_0>n({1}+k_1).
$$
This concludes the analysis of the case 1).

\smallskip
Now we are going to examine the case 2). We have $d_1\leq M_3$.
Taking into account Lemma \ref{planegenus}, we may write:
$$
R_1=-\frac{dd_1}{n}+\frac{d_1^2}{2n}-\frac{d_1}{2n}(2\pi_X-2-n)+p_a(C_1)+k_1d_1
$$
$$
=-\frac{d'_0d_1}{n}-\frac{d_1^2}{2n}-\frac{d_1}{2n}(2\pi_X-2-n)+p_a(C_1)+k_1d_1
$$
$$
\leq
-\frac{d'_0d_1}{n}-\frac{d_1^2}{2n}+2d_1+\frac{1}{2}(d_1-1)(d_1-2)+k_1d_1.
$$
Dividing by $d_1$, we see that in order to prove that $R_1\leq 0$ it
suffices that:
$$
\frac{d-d_1}{n}\geq -\frac{d_1}{2n}+2+\frac{1}{2}d_1+k_1.
$$
This holds true because $d>M(n)$ and $d_1\leq M_3$.

\smallskip
Summing up, based on the previous analysis, we have proved the
inequality (\ref{nbound3fold}), except in the case
$$
d_1>M_3\quad{\text{and}}\quad C'_1\quad{\text{is of Type III.}}
$$
In this case, substituting $C_0$ with $C_1$, we repeat the same
argument on $C_1$.

\smallskip
{\bf{Step 1}}.

\smallskip
As before, we find a curve $C_2\subset C_1$ of degree $d_2$, and a
surface $D_2\subset X$ with $\deg D_2=k_2n\in
[n,\,\frac{4}{3}n(n+1)]$ such that
\begin{equation}\label{stimaC_1}
p_a(C_1)=p_a(C_2)+p_a(D_2\cap C_1)+k_2d_2-1,
\end{equation}
$$
\deg (D_2\cap C_1) >  k_2\sqrt{2d}-\dfrac{{k_2}^2}{2},
$$
and
$$
\deg C_2+\deg(D_2\cap C_1)=d_1=\deg C_1.
$$
Set $\deg (D_2\cap C_1)=d'_1=d_1-d_2$, and denote by $C'_2$ a generic
projection of $C_2$.

\smallskip
We are going to prove that in the following two subcases

\smallskip
1) $d_2>M_3$ and $C'_2$ is of Type I or II,

\smallskip
2) $d_2\leq M_3$,

the inequality (\ref{nbound3fold}) holds when $d>M(n)$.

\smallskip
Combining (\ref{stimaC_0}) with (\ref{stimaC_1}), we have:
$$
p_a(C)=p_a(C_2)+p_a(C_1\cap D_2)+p_a(C_0\cap D_1)+k_2d_2+k_1d_1-2.
$$
As in (\ref{primod}), since $d_1>M_3>M_4$, we have $d'_1>M_2$.
Therefore, as in Type II, we have:
$$
p_a(C_1\cap D_2)\leq
\frac{{d'_1}^2}{2n}+\frac{d'_1}{2n}(2\pi_X-2-n)+4\nu^6.
$$
Hence:
$$
p_a(C)=\sum_{i=0}^{1}p_a(C_i\cap
D_{i+1})+p_a(C_2)+\left(\sum_{j=1}^{2}k_jd_j\right)-2
$$
$$
\leq
\left[\sum_{i=0}^{1}\left(\frac{{d'_i}^2}{2n}+\frac{d'_i}{2n}(2\pi_X-2-n)+4\nu^6\right)\right]+p_a(C_2)+\left(\sum_{j=1}^{2}k_jd_j\right)-2
$$
$$
=\frac{1}{2n}\left(\sum_{i=0}^{1}{d'_i}\right)^2
-\frac{1}{n}\left(\sum_{0\leq x<y\leq 1}d'_xd'_y\right)
$$
$$
+\frac{1}{2n}(2\pi_X-2-n)\left(\sum_{i=0}^{1}d'_i\right)+2(4\nu^6)+p_a(C_2)+\left(\sum_{j=1}^{2}k_jd_j\right)-2
$$
$$
=\frac{1}{2n}(d-d_2)^2+\frac{d-d_2}{2n}(2\pi_X-2-n)+p_a(C_2)+2(4\nu^6)
$$
$$
+\left(\sum_{j=1}^{2}k_jd_j\right)-2-\frac{1}{n}\left(\sum_{0\leq
x<y\leq 1}d'_xd'_y\right)
$$
$$
=\frac{{d}^2}{2n}+\frac{d}{2n}(2\pi_X-2-n)+4\nu^6+R_2,
$$
where
$$
R_2=-\frac{dd_2}{n}+\frac{d_2^2}{2n}-\frac{d_2}{2n}(2\pi_X-2-n)+4\nu^6+p_a(C_2)
$$
$$
+\left(\sum_{j=1}^{2}k_jd_j\right)-2-\frac{1}{n}\left(\sum_{0\leq
x<y\leq 1}d'_xd'_y\right).
$$
Therefore, in order to prove (\ref{nbound3fold}) it suffices to prove
that $R_2\leq 0$.

\smallskip
In the case 1) we have
$$
p_a(C_2)\leq \frac{{d_2}^2}{2n}+\frac{d_2}{2n}(2\pi_X-2-n)+4\nu^6.
$$
Inserting in $R_2$ we get:
$$
R_2\leq
-\frac{dd_2}{n}+\frac{d_2^2}{n}+2(4\nu^6)+\left(\sum_{j=1}^{2}k_jd_j\right)-2-\frac{1}{n}\left(\sum_{0\leq
x<y\leq 1}d'_xd'_y\right)
$$
$$
=-\frac{d_2(d_2+\sum_{i=0}^{1}d'_i)}{n}+\frac{d_2^2}{n}+2(4\nu^6)+\left(\sum_{j=1}^{2}k_jd_j\right)-2-\frac{1}{n}\left(\sum_{0\leq
x<y\leq 1}d'_xd'_y\right)
$$
$$
\leq
-\frac{\sum_{i=0}^{1}d_2d'_i}{n}+2(4\nu^6)+\left(\sum_{j=1}^{2}k_jd_j\right)-\frac{1}{n}\left(\sum_{0\leq
x<y\leq 1}d'_xd'_y\right)
$$
$$
=-\frac{1}{n}\sum_{i=0}^{1}d'_id_{i+1}+2(4\nu^6)+\left(\sum_{j=1}^{2}k_jd_j\right)
$$
\begin{equation}\label{both}
=\sum_{i=0}^{1}\left[-\frac{1}{n}d'_id_{i+1}+k_{i+1}d_{i+1}+4\nu^6\right].
\end{equation}
Since $d=d_0$, $d_1$ and $d_2$ are $>M_3$, as in (\ref{nstruttura})
one sees that both the summands in (\ref{both}) are $\leq 0$. This
concludes the analysis of the case 1).

\smallskip
In the second case 2), we have $d_2\leq M_3$. Now, we may write:
$$
R_2=-\frac{dd_2}{n}+\frac{d_2^2}{2n}-\frac{d_2}{2n}(2\pi_X-2-n)+4\nu^6+p_a(C_2)
$$
$$
+\left(\sum_{j=1}^{2}k_jd_j\right)-2-\frac{1}{n}\left(\sum_{0\leq
x<y\leq 1}d'_xd'_y\right)
$$
$$
=-\frac{1}{n}d_2\left(\sum_{i=0}^{1}d'_i\right)-\frac{d_2^2}{2n}-\frac{d_2}{2n}(2\pi_X-2-n)+4\nu^6
$$
$$
+p_a(C_2)+\left(\sum_{j=1}^{2}k_jd_j\right)-2-\frac{1}{n}\left(\sum_{0\leq
x<y\leq 1}d'_xd'_y\right)
$$
$$
=\sum_{j=1}^{2}\left(-\frac{d_jd'_{j-1}}{n}+k_jd_j\right)
-\frac{d_2^2}{2n}-\frac{d_2}{2n}(2\pi_X-2-n)+4\nu^6 +p_a(C_2)-2.
$$
As in (\ref{nstruttura}), since $d_1>M_3$, one sees that
$$
-\frac{d_2d'_{1}}{n}+k_2d_2\leq 0.
$$
Hence, using also Lemma \ref{planegenus} and (\ref{stimapi}), we have:
$$
R_2\leq
-\frac{d_1d'_{0}}{n}+k_1d_1-\frac{d_2^2}{2n}-\frac{d_2}{2n}(2\pi_X-2-n)+4\nu^6
+p_a(C_2)
$$
$$
\leq
-\frac{d_1d'_{0}}{n}+k_1d_1-\frac{d_2^2}{2n}+2{d_2}+4\nu^6+\frac{1}{2}(d_2-1)(d_2-2).
$$
Since $d_1>M_3$, $d_2\leq M_3$, and $d_2<d_1$, dividing by $d_1$ one sees that in
order to prove that $R_2\leq 0$ it suffices that:
$$
\frac{1}{n}(d-d_1)\geq k_1+3+\frac{d_2}{2}.
$$
On the other hand, we know that $d-d_1>
k_1\sqrt{2d}-\frac{k_1^2}{2}$. Therefore,  $R_2\leq 0$ when
$$
\sqrt{2d}\geq n\left(\frac{k_1}{2}+4+\frac{M_3}{2}\right).
$$
This holds true because $d>M(n)$.

\smallskip
Based on the previous analysis, we have proved the inequality
(\ref{nbound3fold}), except in the case
$$
d_2>M_3\quad{\text{and}}\quad C'_2\quad{\text{is of Type III.}}\quad
$$
In this case, substituting $C_1$ with $C_2$, we repeat the same
argument on $C_2$.

\smallskip
Continuing in this way, after a certain number $m-1\,(\geq 1)$ of
steps, either we manage to prove (\ref{nbound3fold}), or we will find
ourselves in the situation described below (here, as for the choice of the letter "$m$",
we decide to keep the same notation as in \cite[p. 23]{Liu}; this letter  "$m$" should not be confused with the one defined above
in the claim of Theorem \ref{HGP}, and in (\ref{divide})).

\smallskip
{\bf{Step m-1}}.

\smallskip
We find two sequences of curves
$$C'_m\subset C'_{m-1}\subset \dots\subset C'_2\subset C'_1\subset C'_0=C'\subset \mathbb P^3,$$
$$C_m\subset C_{m-1}\subset \dots\subset C_2\subset C_1\subset C_0=C\subset X,$$
and a sequence of surfaces of $X$
$$
D_m, \, D_{m-1},\,\dots,\,D_1
$$
such that $\deg C'_i=\deg C_i=d_i$ for every $i=0,\dots,m$,
\begin{equation}\label{degrees}
0<d_m<d_{m-1}<\dots < d_1<d_0=d,
\end{equation}
$$
d_i>M_3\quad \forall \, 0\leq i\leq m-1,
$$
$$
\deg D_i=k_in\in [n,\,\frac{4}{3}n(n+1)]\quad \forall \, 1\leq i\leq
m,
$$
$$
\deg C_{i+1}+\deg(D_{i+1}\cap C_i)=d_i \quad\forall \, 0\leq i\leq
m-1,
$$
$$
d'_i=d_i-d_{i+1}> k_{i+1}\sqrt{2d}-\frac{k_{i+1}^2}{2}\quad \forall
\, 0\leq i\leq m-1,
$$
\begin{equation}\label{stimaC_m}
p_a(C)=\sum_{i=0}^{m-1}p_a(C_i\cap
D_{i+1})+p_a(C_m)+\left(\sum_{j=1}^{m}k_jd_j\right)-m.
\end{equation}

\smallskip
Now we are going  to prove that, regardless of how many steps are
taken, in the following two subcases

\smallskip
1) $d_m>M_3$ and $C'_m$ is of Type I or II,

\smallskip
2) $d_m\leq M_3$,

we can prove the inequality (\ref{nbound3fold}) for $d>M(n)$. This
will conclude the proof of  Theorem \ref{n3fold}, because, since the
sequence (\ref{degrees}) of degrees is strictly decreasing, at a
certain point $d_m\leq M_3$ must be.

\smallskip
We are going to examine the two subcases 1) and 2).

As in (\ref{primod}), since every $d_i>M_3>M_4$ ($0\leq i\leq m-1$),
we have $d'_i>M_2$. Therefore, as  in Type II, for every  $0\leq
i\leq m-1$ we have:
$$
p_a(C_i\cap D_{i+1})\leq
\frac{{d'_i}^2}{2n}+\frac{d'_i}{2n}(2\pi_X-2-n)+4\nu^6.
$$
From (\ref{stimaC_m}) we deduce:
$$
p_a(C)=\sum_{i=0}^{m-1}p_a(C_i\cap
D_{i+1})+p_a(C_m)+\left(\sum_{j=1}^{m}k_jd_j\right)-m
$$
$$
\leq
\left[\sum_{i=0}^{m-1}\left(\frac{{d'_i}^2}{2n}+\frac{d'_i}{2n}(2\pi_X-2-n)+4\nu^6\right)\right]+p_a(C_m)+\left(\sum_{j=1}^{m}k_jd_j\right)-m
$$
$$
=\frac{1}{2n}\left(\sum_{i=0}^{m-1}{d'_i}\right)^2
-\frac{1}{n}\left(\sum_{0\leq x<y\leq m-1}d'_xd'_y\right)
$$
$$
+\frac{1}{2n}(2\pi_X-2-n)\left(\sum_{i=0}^{m-1}d'_i\right)+m(4\nu^6)+p_a(C_m)+\left(\sum_{j=1}^{m}k_jd_j\right)-m
$$
$$
=\frac{1}{2n}(d-d_m)^2+\frac{d-d_m}{2n}(2\pi_X-2-n)+p_a(C_m)+m(4\nu^6)
$$
$$
+\left(\sum_{j=1}^{m}k_jd_j\right)-m-\frac{1}{n}\left(\sum_{0\leq
x<y\leq m-1}d'_xd'_y\right)
$$
$$
=\frac{{d}^2}{2n}+\frac{d}{2n}(2\pi_X-2-n)+4\nu^6 + R_m,
$$
where
$$
R_m=-\frac{dd_m}{n}+\frac{d_m^2}{2n}-\frac{d_m}{2n}(2\pi_X-2-n)+(m-1)4\nu^6+p_a(C_m)
$$
$$
+\left(\sum_{j=1}^{m}k_jd_j\right)-m-\frac{1}{n}\left(\sum_{0\leq
x<y\leq m-1}d'_xd'_y\right).
$$
We only have to prove that $R_m\leq 0$.

\smallskip
In the case 1), since $d_m>M_3$, we have
$$
p_a(C_m)\leq \frac{{d_m}^2}{2n}+\frac{d_m}{2n}(2\pi_X-2-n)+4\nu^6.
$$
Inserting in $R_m$ we get:
$$
R_m\leq
-\frac{dd_m}{n}+\frac{d_m^2}{n}+m(4\nu^6)+\left(\sum_{j=1}^{m}k_jd_j\right)-m-\frac{1}{n}\left(\sum_{0\leq
x<y\leq m-1}d'_xd'_y\right)
$$
$$
=-\frac{d_m(d_m+\sum_{i=0}^{m-1}d'_i)}{n}+\frac{d_m^2}{n}+m(4\nu^6)+\left(\sum_{j=1}^{m}k_jd_j\right)-m-\frac{1}{n}\left(\sum_{0\leq
x<y\leq m-1}d'_xd'_y\right)
$$
$$
\leq
-\frac{\sum_{i=0}^{m-1}d_md'_i}{n}+m(4\nu^6)+\left(\sum_{j=1}^{m}k_jd_j\right)-\frac{1}{n}\left(\sum_{0\leq
x<y\leq m-1}d'_xd'_y\right)
$$
$$
=-\frac{1}{n}\sum_{i=0}^{m-1}d'_i\left(d'_{i+1}+\dots+d'_{m-1}+d_m\right)+m(4\nu^6)+\left(\sum_{j=1}^{m}k_jd_j\right)
$$
$$
=\sum_{i=0}^{m-1}\left[-\frac{1}{n}d'_id_{i+1}+k_{i+1}d_{i+1}+4\nu^6\right].
$$
Since all the degrees $d=d_0,d_1,\dots,d_m$ are $>M_3$, as in
(\ref{nstruttura}) we see that each of the $m$ addends of the
summation are $\leq 0$. This concludes the analysis of the case 1).

\smallskip
Now, we are going to examine the second subcase 2),  when $d_m\leq
M_3$. We have:
$$
R_m=-\frac{dd_m}{n}+\frac{d_m^2}{2n}-\frac{d_m}{2n}(2\pi_X-2-n)+(m-1)4\nu^6+p_a(C_m)
$$
$$
+\left(\sum_{j=1}^{m}k_jd_j\right)-m-\frac{1}{n}\left(\sum_{0\leq
x<y\leq m-1}d'_xd'_y\right)
$$
$$
=-\frac{1}{n}d_m\left(\sum_{i=0}^{m-1}d'_i\right)-\frac{d_m^2}{2n}-\frac{d_m}{2n}(2\pi_X-2-n)+(m-1)4\nu^6
$$
$$
+p_a(C_m)+\left(\sum_{j=1}^{m}k_jd_j\right)-m-\frac{1}{n}\left(\sum_{0\leq
x<y\leq m-1}d'_xd'_y\right)
$$
$$
=\sum_{j=1}^{m}\left(-\frac{d_jd'_{j-1}}{n}+k_jd_j\right)
-\frac{d_m^2}{2n}-\frac{d_m}{2n}(2\pi_X-2-n)+(m-1)4\nu^6 +p_a(C_m)-m
$$
$$
=\left[\sum_{j=2}^{m-1}\left(-\frac{d_jd'_{j-1}}{n}+k_jd_j+4\nu^6\right)\right]+
\left(-\frac{d_md'_{m-1}}{n}+k_md_m\right)+
\left(-\frac{d_1d'_{0}}{n}+k_1d_1+4\nu^6\right)
$$
$$
-\frac{d_m^2}{2n}-\frac{d_m}{2n}(2\pi_X-2-n) +p_a(C_m)-m.
$$

\smallskip
As in (\ref{nstruttura}), since each $d_j$, $0\leq j\leq {m-1}$, is
$>M_3$, one sees that
$$
-\frac{d_jd'_{j-1}}{n}+k_jd_j+4\nu^6\leq 0\quad \forall\, 2\leq
j\leq {m-1},
$$
and
$$
-\frac{d_md'_{m-1}}{n}+k_md_m\leq 0.
$$
Therefore, taking into account Lemma \ref{planegenus} and
(\ref{stimapi}), we have:
$$
R_m\leq
\left(-\frac{d_1d'_{0}}{n}+k_1d_1+4\nu^6\right)-\frac{d_m^2}{2n}-\frac{d_m}{2n}(2\pi_X-2-n)
+p_a(C_m)
$$
$$
\leq
\left(-\frac{d_1d'_{0}}{n}+k_1d_1+4\nu^6\right)-\frac{d_m^2}{2n}+2{d_m}
+\frac{1}{2}(d_m-1)(d_m-2).
$$
Since $d_1>M_3$ and $d_m\leq M_3$, dividing by $d_1$ one sees that
in order to prove that $R_m\leq 0$ it suffices that
$$
\frac{1}{n}(d-d_1)\geq k_1+3+\frac{d_m}{2}.
$$
On the other hand, we know that
$$
d-d_{1}> k_{1}\sqrt{2d}-\frac{k_{1}^2}{2}.
$$
So, it suffices that:
$$
\sqrt{2d}\geq n\left(\frac{k_1}{2}+4+\frac{M_3}{2}\right),
$$
which holds true because  $d>M(n)$. Since $1\leq k_1\leq \frac{4}{3}(n+1)$ and $M_3=4\nu^6$,
this explains our definition of $M(n)$.

\smallskip
This concludes the proof of Theorem \ref{n3fold}.
\end{proof}

\smallskip
\begin{proof}[Proof of Corollary \ref{cor1}]
Set
$$
C_0=C, \quad d_0=d.
$$
Let  $C'_0\subset \mathbb P^3$ be a generic projection  of $C_0$. We
have $\deg C_0=\deg C'_0$, $p_a(C_0)\leq p_a(C'_0)$. By \cite[Lemma
4.3 and proof]{Liu2} we may assume $C_0$ is Cohen-Macaulay. So also
$C'_0$ is Cohen-Macaulay.

\smallskip
There are only three possibilities for $C'_0$.

\smallskip
$\bullet$ Type I.

\smallskip
For every surface $S \in \vert \mathcal{O}_{\mathbb{P}^3}(k) \vert$
with $1\leq k\leq \frac{4}{3}(n+1)$, one has either $\dim S\cap
C'_0=0$, or $\mathrm{dim}\ S\cap C'_0 = 1$ and
$$
\deg(S\cap C'_0) \leq k\sqrt{2d}-\dfrac{k^2}{2}.
$$
In view of  Corollary \ref{Halphen2} we deduce (set $p=n+1$):
$$
p_a(C'_0)\leq \frac{d^2}{2(n+1)}+\frac{2}{9}(2n-7)d+1.
$$
We are done, because this number is $\leq
\frac{d^2}{2n}+\frac{d}{2n}\left(2\pi_X-2-n\right)-\frac{1}{n}\sqrt{d}$
when $d>M^*(n)$.

\smallskip
$\bullet$ Type II.

\smallskip
$C'_0$ is contained in a surface  $S_0\subset \mathbb P^3$ with
$\deg S_0\in [1,\,\frac{4}{3}(n+1)]$. We may lift such a surface to
a surface $D_0\subset X$ containing $C_0$, with $\deg D_0\in
[n,\,\frac{4}{3}n(n+1)]$. Let
$s_0$ be the minimal degree of an integral component of $D_0$.
Observe that $n\leq s_0$ because $X$ is factorial and the Picard
group of $X$ is generated by the hyperplane section. From
Proposition \ref{nSnonintegra} we know that
$$
p_a(C_0)\leq \frac{d^2}{2s_0}+\frac{d}{2s_0}(2\pi_0-2-s_0)+4(\deg
D_0)^6.
$$
Here $\pi_0$ denotes the sectional genus of such a component of
minimal degree. If $s_0>n$, a direct computation proves that, for
$d>M^*(n)$, one has
$$
p_a(C_0)\leq
\frac{d^2}{2s_0}+\frac{d}{2s_0}(2\pi_0-2-s_0)+4(\deg
D_0)^6\leq
\frac{d^2}{2n}+\frac{d}{2n}\left(2\pi_X-2-n\right)-\frac{1}{n}\sqrt{d},
$$
and we are done. If $s_0=n$, our claim follows by Proposition
\ref{refined}.

\smallskip
$\bullet$ Type III.

\smallskip
$C'_0$ is not of Type I and is not of Type II.

Therefore, there exists a surface  $S_1\subset \mathbb P^3$ with
$\deg S_1=k_1\in [1,\,\frac{4}{3}(n+1)]$,  such that $S_1\cap C'_0$
is a curve and such that
$$
\deg S_1\cap C'_0>  k_1\sqrt{2d}-\dfrac{{k_1}^2}{2}.
$$
Observe that, since $C'_0$ is Cohen-Macaulay,  we have $\deg
C'_0>\deg S_1\cap C'_0$. In fact, otherwise,  $S_1\cap C'_0=C'_0$,
and $C'_0$ would be contained in $S_1$, i.e. $C'_0$ would be of Type
II. As in the previous proof of Theorem \ref{n3fold},
combining \cite[Lemma 3.1]{Liu} with \cite[Proposition
3.15]{Liu}, we deduce the existence of a  curve $C_1\subset C_0$, a
surface $D_1\subset X$  with $\deg D_1=k_1n\in
[n,\,\frac{4}{3}n(n+1)]$, such that
\begin{equation*}
p_a(C_0)=p_a(C_1)+p_a(D_1\cap C_0)+k_1d_1-1,
\end{equation*}
$$
0>\deg (D_1\cap C_0) >  k_1\sqrt{2d}-\dfrac{{k_1}^2}{2},
$$
and
$$
\deg C_1+\deg(D_1\cap C_0)=d=\deg C_0.
$$
Set
$$
d_1=\deg C_1,\quad d'_0=d_0-d_1=d-d_1=\deg (D_1\cap C_0).
$$
As in the proof of Theorem \ref{n3fold}, we have:
$$
p_a(C_0)=p_a(C_1)+p_a(C_0\cap D_1)+k_1d_1-1
$$
$$
\leq
\frac{{d'_0}^2}{2n}+\frac{d'_0}{2n}(2\pi_X-2-n)+4\nu^6+[p_a(C_1)+k_1d_1]
$$
$$
=\frac{({d-d_1})^2}{2n}+\frac{d-d_1}{2n}(2\pi_X-2-n)+4\nu^6+[p_a(C_1)+k_1d_1]
$$
$$
=\frac{{d}^2}{2n}+\frac{d}{2n}\left(2\pi_X-2-n\right)-\frac{1}{n}\sqrt{d}+R^*_1
$$
where
$$
R^*_1=4\nu^6+\frac{1}{n}\sqrt{d}-\frac{dd_1}{n}+\frac{d_1^2}{2n}-\frac{d_1}{2n}(2\pi_X-2-n)+p_a(C_1)+k_1d_1.
$$
It suffices to prove that $R^*_1\leq 0$. Let's distinguish two
cases:

\smallskip
1) $d_1>M(n)$;

\smallskip
2) $d_1\leq M(n)$.

\smallskip
In the case 1), by Theorem \ref{n3fold} we have
$$
p_a(C_1)\leq \frac{{d_1}^2}{2n}+\frac{d_1}{2n}(2\pi_X-2-n)+4\nu^6.
$$
Inserting into $R^*_1$ we have:
$$
R^*_1\leq
4\nu^6+\frac{1}{n}\sqrt{d}-\frac{dd_1}{n}+\frac{d_1^2}{2n}-\frac{d_1}{2n}(2\pi_X-2-n)+\left(\frac{{d_1}^2}{2n}+\frac{d_1}{2n}(2\pi_X-2-n)+4\nu^6\right)+k_1d_1
$$
$$
\leq
\frac{1}{n}\sqrt{d}-\frac{dd_1}{n}+\frac{d_1^2}{n}+8\nu^6+k_1d_1.
$$
Hence, it suffices to prove that
$$
\frac{d_1}{n}(d-d_1)\geq  \frac{1}{n}\sqrt{d}+8\nu^6+k_1d_1.
$$
Dividing by $\frac{d_1}{n}$, taking into account that $d_1>M(n)$ and
that $d-d_1>k_1\sqrt{2d}-\dfrac{{k_1}^2}{2}$, it suffices that:
$$
\sqrt{2d}\geq \sqrt{d}+2\nu.
$$
This holds true when $d>M^*(n)$.

\smallskip
In the case 2), i.e. $d_1\leq M(n)$, using Lemma \ref{planegenus} and (\ref{stimapi}),
we have:
$$
R^*_1=4\nu^6+\frac{1}{n}\sqrt{d}-\frac{dd_1}{n}+\frac{d_1^2}{2n}-\frac{d_1}{2n}(2\pi_X-2-n)+p_a(C_1)+k_1d_1
$$
$$
=4\nu^6+\frac{1}{n}\sqrt{d}-\frac{d'_0d_1}{n}-\frac{d_1^2}{2n}-\frac{d_1}{2n}(2\pi_X-2-n)+p_a(C_1)+k_1d_1
$$
$$
\leq
4\nu^6+\frac{1}{n}\sqrt{d}-\frac{d'_0d_1}{n}-\frac{d_1^2}{2n}+2d_1+\frac{1}{2}(d_1-1)(d_1-2)+k_1d_1.
$$
It suffices to prove that
$$
\frac{d_1}{n}(d-d_1)\geq
4\nu^6+\frac{1}{n}\sqrt{d}+2d_1+\frac{1}{2}(d_1-1)(d_1-2)+k_1d_1.
$$
Dividing by $\frac{d_1}{n}$, taking into account that
$d-d_1>k_1\sqrt{2d}-\dfrac{{k_1}^2}{2}$ and $d_1\leq M(n)$, it
suffices that
$$
\sqrt{2d}\geq
\sqrt{d}+4n\nu^6+2n+\frac{n}{2}M(n)+n+\frac{2}{3}(n+1).
$$
Hence, it suffices that
$$
\sqrt{2d}\geq \sqrt{d}+nM(n).
$$
This holds true for $d>M^*(n)$.
\end{proof}

\smallskip
\begin{proof}[Proof of Corollary \ref{cor2}] Let $C\in \mathcal C(X,d)$
be a curve of maximal arithmetic genus $G(X,d)$. By the proof of
Corollary \ref{cor1} and by our assumption on $p_a(C)$, it follows
that $C$ is contained in a surface $S$ of $X$ of degree $\deg S\leq
\frac{4}{3}n(n+1)$, with an integral component of minimal degree
$n$, i.e. a hyperplane section of $X$ (we may assume $X\subset
\mathbb P^r$ and  $h^0(X,\mathcal O_X(1))=r+1$). Therefore, our
claim follows by Corollary \ref{intcompl}.
\end{proof}

\smallskip
\begin{remark}\label{finalr}
$(i)$ Corollary \ref{ncor3} is an immediate consequence of Corollary
\ref{cor1}.

\smallskip
$(ii)$ With the same notations of Theorem \ref{n3fold}, recall that
if $C$ is a complete intersection of degree $d$ on a hyperplane
section of $X$, then
$$
p_a(C)=\frac{{d}^2}{2n}+\frac{d}{2n}(2\pi_X-2-n)+1.
$$
This proves that, apart the constant term, the bound
(\ref{nbound3fold}) is sharp (compare with (\ref{boundintcompl}), Remark \ref{Halphen}, $(i)$,  (\ref{Sab2})).
Moreover, in this case Corollary
\ref{cor2} applies. Therefore, at least when $d\gg 0$ is a multiple
of $n$, a curve of $X$ of maximal arithmetic genus is necessarily
contained in a hyperplane section of $X$.

\smallskip
$(iii)$ It may be interesting to point out that, if $C$ is an
{\it integral} (i.e. reduced and irreducible) curve of degree $d\gg n$,
one can easily prove Theorem \ref{Liu} and Theorem
\ref{n3fold} combining the main
result in \cite{CCD} with \cite[Lemma]{D3}. We briefly sketch the proof. If $C$ is not
contained in a surface of $\mathbb P^r$ of degree $<n+1$, by
\cite{CCD} we know that
$$
p_a(C)\leq \frac{{d}^2}{2(n+1)}+O(d).
$$
And this number is strictly less than (\ref{nbound3fold}) for $d\gg n$.
Therefore, one may assume that $C$ is contained in a surface of
$\mathbb P^r$ of degree $\leq n$. Since $C$ is contained in $X$, by
our hypotheses on $X$ and degree reasons, $C$ should be contained in
a hyperplane section $H$ of $X$. Since $H$ is integral, inequality
(\ref{nbound3fold}) follows from \cite[Lemma]{D3}. If $H$ is a Castelnuovo surface (e.g. $H\subset\mathbb P^3$),
instead of the inequality (\ref{nbound3fold})
one may apply (\ref{MainCCD}), which is sharp (when $H\subset\mathbb P^3$, (\ref{MainCCD}) reduces to
Halphen's bound  (\ref{HGPbound}) or (\ref{Liubound})). Moreover,
if $X\subset \mathbb P^5$ is a complete intersection, for the integral curves in $H$ one may apply
the sharp bound appearing in \cite[Theorem, p. 119]{P4}.

\smallskip
$(iv)$ In view of previous remark, one might ask whether the
previous argument can be adapted to non integral curves.
\end{remark}

\bigskip

\end{document}